 \definecolor{green2}{rgb}{0, 0.5, 0}
 \newtheorem{theorem}{Theorem}
 \newtheorem{lemma}{Lemma}
 \newtheorem{remark}{Remark}
\newcommand{\eref}[1]{\mbox{\rm(\ref{#1})}}
\newcommand{\seref}[2]{\mbox{\rm(\ref{#1}{#2})}}
\newcommand{\set}[1]{\left\{ #1 \right\} }
\newcommand{\norm}[1]{\left\| #1 \right\|}
\newcommand{\real}{\mathbb{R}}
\newcommand {\cD} {{\cal D}}
\newcommand {\cI} {{\cal I}}
\newcommand {\cL} {{\cal L}}
\newcommand {\cS} {{\cal S}}
\newcommand{\bzero}{\boldsymbol{0}}
\newcommand{\ba}{\boldsymbol{a}}
\newcommand{\bb}{\boldsymbol{b}}
\newcommand{\bc}{\boldsymbol{c}}
\newcommand{\bg}{\boldsymbol{g}}
\newcommand{\bh}{\boldsymbol{h}}
\newcommand{\br}{\boldsymbol{r}}
\newcommand{\bs}{\boldsymbol{s}}
\newcommand{\bu}{\boldsymbol{u}}
\newcommand{\bv}{\boldsymbol{v}}
\newcommand{\bx}{\boldsymbol{x}}
\newcommand{\by}{\boldsymbol{y}}
\newcommand{\bz}{\boldsymbol{z}}
\newcommand{\bA}{\boldsymbol{A}}
\newcommand{\bB}{\boldsymbol{B}}
\newcommand{\bC}{\boldsymbol{C}}
\newcommand{\bE}{\boldsymbol{E}}
\newcommand{\bH}{\boldsymbol{H}}
\newcommand{\bI}{\boldsymbol{I}}
\newcommand{\bP}{\boldsymbol{P}}
\newcommand{\bR}{\boldsymbol{R}}
\newcommand{\bU}{\boldsymbol{U}}
\newcommand{\bV}{\boldsymbol{V}}
\newcommand{\bW}{\boldsymbol{W}}
\newcommand{\bX}{\boldsymbol{X}}
\newcommand{\bZ}{\boldsymbol{Z}}
\newcommand{\tbb}{\widetilde{\boldsymbol{b}}}
\newcommand{\tbg}{\widetilde{\boldsymbol{g}}}
\newcommand{\tbA}{\widetilde{\boldsymbol{A}}}
\newcommand{\tbB}{\widetilde{\boldsymbol{B}}}
\newcommand{\tbC}{\widetilde{\boldsymbol{C}}}
\newcommand{\tbW}{\widetilde{\boldsymbol{W}}}
\newcommand{\hbw}{\widehat{\boldsymbol{w}}}
\newcommand{\hbx}{\widehat{\boldsymbol{x}}}
\newcommand{\hblambda}{\widehat{\boldsymbol{\lambda}}}
\newcommand{\hbF}{\widehat{\boldsymbol{F}}}
\newcommand{\obx}{\overline{\boldsymbol{x}}}
\newcommand{\oblambda}{\overline{\boldsymbol{\lambda}}}
\newcommand{\hbA}{\widehat{\boldsymbol{A}}}
\newcommand{\hbP}{\widehat{\boldsymbol{P}}}
\newcommand{\bmu}{{\boldsymbol{\mu}}}
\newcommand{\bsigma}{{\boldsymbol{\sigma}}}
\newcommand{\blambda}{{\boldsymbol{\lambda}}}
\newcommand{\brho}{{\boldsymbol{\rho}}}
\newcommand{\bPhi}{{\boldsymbol{\Phi}}}
\newcommand{\bSigma}{{\boldsymbol{\Sigma}}}
\journal{Computer Methods in Applied Mechanics and Engineering}
\begin{document}
\begin{frontmatter}

\title{A fast and accurate domain decomposition nonlinear manifold reduced order model}
\author{Alejandro N. Diaz\corref{cor1}\fnref{label1}}
\cortext[cor1]{Corresponding author}
\ead{and5@rice.edu}

\author{Youngsoo Choi\fnref{label2}}
\ead{choi15@llnl.gov}

\author{Matthias Heinkenschloss\fnref{label1}}
\ead{heinken@rice.edu}

\affiliation[label1]{
			organization={Department of Compuational Applied Mathematics and Operations Research, 
						  Rice University},
            city={Houston},
            postcode={77005}, 
            state={TX},
            country={United States of America}}
			
\affiliation[label2]{
			organization={Center for Applied Scientific Computing,
			              Lawrence Livermore National Laboratory},
			city={Livermore},
			postcode={94550}, 
			state={CA},
			country={United States of America}}





\begin{abstract}
This paper integrates nonlinear-manifold reduced order models (NM-ROMs) with domain decomposition (DD). 
NM-ROMs approximate the full order model (FOM) state in a nonlinear-manifold by training a shallow, sparse autoencoder using FOM snapshot data. 
These NM-ROMs can be advantageous over linear-subspace ROMs (LS-ROMs) for problems with slowly decaying Kolmogorov $n$-width. 
However, the number of NM-ROM parameters that need to be trained scales with the size of the FOM. Moreover, for ``extreme-scale" problems, the storage of high-dimensional FOM snapshots alone can make ROM training expensive. 
To alleviate the training cost, this paper applies DD to the FOM, computes NM-ROMs on each subdomain, and couples them to obtain a global NM-ROM. 
This approach has several advantages: 
Subdomain NM-ROMs can be trained in parallel, involve fewer parameters to be trained than global NM-ROMs, require smaller subdomain FOM dimensional training data, and can be tailored to subdomain-specific features of the FOM. 
The shallow, sparse architecture of the autoencoder used in each subdomain NM-ROM allows application of hyper-reduction (HR), reducing the complexity caused by nonlinearity and yielding computational speedup of the NM-ROM.
This paper provides the first application of NM-ROM (with HR) to a DD problem. 
In particular, this paper details an algebraic DD reformulation of the FOM, training a NM-ROM with HR for each subdomain, 
and a sequential quadratic programming (SQP) solver to evaluate the coupled global NM-ROM. 
Theoretical convergence results for the SQP method and {\it a priori} and {\it a posteriori} error estimates for the DD NM-ROM with HR
are provided.
The proposed DD NM-ROM with HR approach is numerically compared to a DD LS-ROM with HR on the 2D steady-state Burgers' equation, showing an order of magnitude improvement in accuracy of the proposed DD NM-ROM over the DD LS-ROM.
\end{abstract}

\begin{keyword}
	Reduced order model, domain decomposition, nonlinear manifold, sparse autoencoders, neural networks, least-squares Petrov-Galerkin
\end{keyword}
\end{frontmatter}


\section{Introduction}   \label{sec:intro}
Many applications in science and engineering require the high-fidelity numerical simulation of a parameterized, large-scale, nonlinear system, referred to as the full order model (FOM). 
For example, in the design of the airfoil of an aircraft, one repeatedly simulates the airflow around the wing to compute the lift and drag for a number of shapes to determine the optimal shape. 
Alternatively, in the case of digital twins, one simulates the high-fidelity FOM in real-time for given system inputs. 
To guarantee a high-fidelity simulation, a high-dimensional numerical model is required, resulting in high computational expense when simulating the FOM.
Consequently, both many-query and real-time applications are infeasible for large-scale problems. 
Model reduction alleviates the computational burden of simulating the high-dimensional FOM by replacing it with a low-dimensional, computationally inexpensive model, referred to as a reduced order model (ROM), that approximates the dynamics of the FOM within a tunable accuracy. 
This ROM can then be used in place of the FOM in real-time and many-query applications.
In this work, we integrate model reduction, specifically the nonlinear-manifold ROM (NM-ROM) approach, with an algebraic domain-decomposition (DD) framework. 

There are a large number of works that consider the integration of DD with model reduction. 
One family of approaches is based on the reduced basis element (RBE) method 
\cite{YMaday_EMRonquist_2002a,YMaday_EMRonquist_2004a}, 
in which reduced bases are computed locally for each subdomain.
In the RBE method, continuity of the reduced basis solution across subdomains can be enforced via Lagrange multipliers as in 
\cite{LIapichino_AQuarteroni_GRozza_2012a},
while others consider a discontinuous Galerkin approach 
\cite{PFAntonietti_PPacciarini_AQuarteroni_2016a}.
Several modifications to the RBE method have been proposed, including the so-called static condensation RBE method 
\cite{JLEftang_DBPHuynh_DJKnezevic_EMRonquist_ATPatera_2012a,DBPHuynh_DJKnezevic_ATPatera_2013a,JLEftang_ATPatera_2013a}, 
which computes a reduced basis (RB) approximation of the Schur complement and provides rigorous {\it a posteriori} error estimators.
The reduced basis hybrid method (RBHM) 
\cite{LIapichino_AQuarteroni_GRozza_2012a}
is another modification of the RBE method, 
in which a global coarse-grid solution is included in the reduced basis to ensure continuity of normal fluxes at subdomain interfaces. For RBHM, this continuity is enforced using Lagrange multipliers. 
Another well-studied approach uses the alternating Schwarz method, which decomposes the physical domain into two or more subdomains with or without overlap, and produces a global solution by iteratively solving the PDE on separate subdomains with boundary conditions coming from the state of neighboring subdomains at the previous iteration. 
The Schwarz method has been developed for both FOM-ROM and ROM-ROM couplings in, e.g., 
\cite{MBuffoni_HTelib_AIollo_2009a, JBarnett_ITezaur_AMota_2022a},
where the ROM is projection-based using Proper Orthogonal Decomposition (POD). 
The approach in \cite{AdeCastro_PBochev_PKuberry_ITezaur_2023a} also considers FOM-ROM and ROM-ROM couplings, but couple subdomain solutions using Lagrange multipliers, and compute bases such that the Schur complement system required for recovering interface solutions is nonsingular. 
The authors in \cite{AIollo_GSambataro_TTaddei_2022a} also consider a Schwarz approach, but use an optimization-based coupling that minimizes the jump between PDE state solutions on the interface of neighboring subdomains.
The authors in \cite{KSmetana_TTaddei_2023a} compute component-based ROMs based on a partition-of-unity to couple local solutions.
Others have considered using DD to compute ROMs for problems with spatially localized nonlinearities \cite{KSun_RGlowinski_MHeinkenschloss_DCSorensen_2008a}, and for use in design optimization
\cite{SMcBane_YChoi_2021a,SMcBane_YChoi_KWillcox_2022a}. 
While these approaches have been successful, they are often problem-specific. 
That is, both RBE- and Schwarz-based methods typically formulate the DD problem at the PDE level and decompose the physical domain into separate subdomains.
In contrast, the authors in \cite{CHoang_YChoi_KCarlberg_2021a} integrate DD and ROM for a general nonlinear FOM at the fully discrete level rather than the PDE level, and {\it algebraically} decompose the FOM rather than considering a decomposition of the physical domain. 
The authors then use POD to compute ROMs for each subdomain, and use an optimization-based coupling to minimize the discrete PDE residual while enforcing compatibility constraints at the interfaces.
In this paper, we extend the DD ROM framework of \cite{CHoang_YChoi_KCarlberg_2021a} to incorporate the NM-ROM approach.

We integrate NM-ROM with DD to reduce the {\it offline} computational cost required for training an NM-ROM, 
and to allow NM-ROMs to scale with increasingly large FOMs. 
Indeed, in the monolithic single-domain case, training NM-ROMs is expensive due to the high-dimensionality of the FOM training data, which results in a large number of neural network (NN) parameters requiring training. 
By coupling NM-ROM with DD, one can compute FOM training data on subdomains, thus reducing the dimensionality of subdomain NM-ROM training data, resulting in fewer parameters that need to be trained per subdomain NM-ROM. 
Furthermore, the subdomain NM-ROMs can be trained in parallel and adapted to subdomain-specific features of the FOM.
We also note that couplings of NNs and DD for solutions of partial differential equations (PDEs) have been considered in previous work 
(e.g., 
\cite{KLi_KTang_TWu_QLiao_2020a, WLi_XXiang_YXu_2020a, QSun_XXu_HYi_2023a, SLi_YXia_YLiu_QLiao_2023a}).
However, these approaches use deep learning to solve a PDE by representing its solution as a NN and minimizing a corresponding physics-informed loss function.
In contrast, our work uses autoencoders to reduce the dimensionality of an existing numerical model. 
The autoencoders are pretrained in an {\it offline} stage to find low-dimensional representations of FOM snapshot data, and used in an {\it online} stage to significantly reduce the computational cost and runtime of numerical simulations. 
Our work is the first to couple autoencoders with DD in the reduced-order modeling context.
%

A number of current model reduction approaches approximate the FOM solution in a low-dimensional linear subspace. 
In this paper, we collectively refer to this class of approaches as linear subspace ROM (LS-ROM). 
The LS-ROM approach supposes that the state solutions of the FOM are contained in a low-dimensional linear subspace. 
A basis for the linear subspace is then computed,  resulting in a ROM whose state consists of the generalized coordinates of the approximate state solution in the reduced subspace.
ROM approaches that follow LS-ROM include the 
reduced basis (RB) method 
\cite{BHaasdonk_2017a,AQuarteroni_AManzoni_FNegri_2016a},   
proper orthogonal decomposition (POD) 
\cite{MHinze_SVolkwein_2005a,MGubisch_SVolkwein_2017a,cheung2023local,copeland2022reduced,carlberg2018conservative},     
balanced truncation and balanced POD 
\cite{ACAntoulas_2005a,PBenner_TBreiten_2017a},   
interpolation and moment-matching based approaches 
\cite{ACAntoulas_CABeattie_SGugercin_2020a,CGu_2011a,PBenner_TBreiten_2015a}, 
the Loewner framework 
\cite{AJMayo_ACAntoulas_2007a,ACAntoulas_IVGosea_ACIonita_2016a,IVGosea_ACAntoulas_2018a}, and the space--time POD \cite{choi2021space,kim2021efficient,choi2019space} that expands the POD modes to temporal domain.
Although LS-ROM approaches have been successful for a number of applications,
it is well known that for advection-dominated problems and problems with sharp gradients, LS-ROM based approaches cannot produce low-dimensional subspaces where the state is well-approximated.
More precisely, LS-ROM struggles when applied to problems with slowly decaying Kolmogorov $n$-width
\cite{MOhlberger_SRave_2016a}.

In recent years, a number of model reduction approaches have been developed to address the Kolmogorov $n$-width barrier.
For example, one class of approaches leverages knowledge of the advection behavior of the given problem to enhance the approximation capabilities of linear subspaces. 
These approaches include composing transport maps with the reduced bases
\cite{NJNair_MBalajewicz_2019a,AIollo_DLombardi_2014a,NCagniart_YMaday_BStamm_2019a}, 
shifting the POD basis \cite{JReiss_PSchulze_JSesterhenn_VMehrmann_2018a},
transforming the physical domain of the snapshots \cite{GWelper_2017a}, and
computing a reduced basis for a Lagrangian formulation of the PDE 
\cite{RMojgani_MBalajewicz_2017a}.
Other approaches consider the use of multiple linear subspaces, where instead of using a global reduced basis, one constructs multiple subspaces for separate regions in the
time domain \cite{cheung2023local,copeland2022reduced,MDihlmann_MDrohmann_BHaasdonk_2011a,MDrohmann_BHaasdonk_MOhlberger_2011a}, 
physical domain \cite{TTaddei_SPerotto_AQuarteroni_2015a}, 
or state space \cite{BPeherstorfer_KWillcox_2015a, DAmsallem_MJZahr_ChFarhat_2012a}.
However, each of these approaches relies upon a substantial amount of {\it a priori} knowledge of the governing PDE in order to improve the local approximation capabilities of linear subspaces. 
In contrast, another class of methods circumvents these drawbacks by approximating the FOM solution in a low-dimensional nonlinear manifold rather than a low-dimensional linear subspace. 
While LS-ROM approaches map the low-dimensional ROM state space to the high-dimensional FOM state space via an affine mapping, 
the approaches in \cite{JBarnett_CFarhat_2022a,RGeelen_SWright_KWillcox_2022a} consider the use of quadratic manifolds, where the ROM state space is mapped to the FOM state space via a quadratic mapping. 
As a further generalization of this mapping, researchers have investigated the use of neural networks to represent general nonlinear mappings from the ROM state space to the FOM state space. 
In particular, the use of autoencoders in the context of model reduction was first considered in the papers 
\cite{KKashima_2016a,DHartman_LKMestha_2017a}. 
Autoencoders are a type of neural network that aims to learn the identity mapping by first {\it encoding} the inputs to some latent representation via the {\it encoder}, 
then {\it decoding} the latent representation to the original input space via the {\it decoder}. 
In \cite{KLee_KTCarlberg_2020a}, the authors consider the use of deep convolutional autoencoders, which augment the autoencoder architecture with convolutional layers. 
While their approach was successful in addressing the Kolmogorov $n$-width issue, the computational speedup was limited because hyper-reduction (HR) was not incorporated into their framework to properly reduce the complexity caused by nonlinear terms.
The authors in \cite{YKim_YChoi_DWidemann_TZohdi_2022a,kim2020efficient} successfully apply HR in the context of NM-ROM and achieve a considerable speed-up, and do so by choosing a shallow, wide, and sparse architecture for the autoencoder. 
The approach in \cite{FRomor_GStabile_GRozza_2023a} also incorporates HR into an NM-ROM approach, but do so by employing a teacher-student training approach, where an autoencoder is first trained to reduce the entire state, and a second decoder is trained to only reproduce the HR nodes. 
This approach also permits the use of more general autoencoder architectures than the shallow, wide, and sparse architectures of \cite{YKim_YChoi_DWidemann_TZohdi_2022a,kim2020efficient}.
However, an advantage of the approach in \cite{YKim_YChoi_DWidemann_TZohdi_2022a,kim2020efficient} is that autoencoder training only happens once rather than requiring a teacher-student training approach. 
Furthermore, this approach allows for different choices of HR nodes after NM-ROM training, whereas the approach in \cite{FRomor_GStabile_GRozza_2023a} requires fixed HR nodes. 

In this paper, we extend the work of \cite{CHoang_YChoi_KCarlberg_2021a} on DD LS-ROM and integrate the NM-ROM approach with HR discussed in \cite{YKim_YChoi_DWidemann_TZohdi_2022a}. 
We incorporate the NM-ROM approach into this framework because of its success when applied to problems with slowly decaying Kolmogorov $n$-width. 
Specifically, to build ROMs on each subdomain of the DD problem, we apply NM-ROM with HR by using wide, shallow, sparse-masked autoencoders. 
The wide, shallow, and sparse architecture allows for hyper-reduction to be efficiently applied, thus reducing the complexity caused by nonlinearity and yielding computational speedup. 
Additionally, we modify the wide, shallow, and sparse architecture used in \cite{YKim_YChoi_DWidemann_TZohdi_2022a} to also include a sparsity mask for the encoder input layer as well as the decoder output layer. 
The sparsity mask at the encoder input layer results in an architecture that is symmetric across the latent layer of the autoencoder. 
Using {\it sparse} linear layers also allows one to make the encoders and decoders very wide while keeping memory costs low.
Integrating NM-ROM with DD allows one to compute the FOM training snapshots on subdomains, thus significantly reducing the number of NN parameters requiring training for each subdomain. 

A summary of the key contributions from this paper are as follows. 
\begin{itemize}
    \item We develop the first application of NM-ROM with HR to a DD problem. 
    \item We modify the autoencoder architecture discussed in \cite{YKim_YChoi_DWidemann_TZohdi_2022a} to also include sparsity in the encoder input layer as well as the decoder output layer. 
    \item We develop an inexact Lagrange-Newton sequential quadratic programming (SQP) method for the DD NM-ROM, and provide a theoretical convergence result for the SQP solver. 
    \item We provide {\it a priori} and {\it a posteriori} error estimates for the DD ROM which are valid for both LS-ROM and NM-ROM. 
    \item We numerically compare DD LS-ROM with DD NM-ROM, both with and without HR, for a number of different problem configurations using the 2D steady-state Burgers' equation.
\end{itemize} 

This paper is structured as follows. Section \ref{sec:dd_formulation} discusses the algebraic DD FOM formulation that we consider. 
Section \ref{sec:dd_lspg} discusses the constrained least-squares Petrov-Galerkin (LSPG) formulation for the ROM, which respects the DD FOM formulation.
We then review the LS-ROM approach based on POD in Section \ref{sec:lsrom}, and detail the NM-ROM approach in Section \ref{sec:nmrom}.
We develop an inexact Lagrange-Newton sequential quadratic programming (SQP) method for the constrained LSPG-ROM in Section \ref{sec:sqp_solver}, followed by standard theoretical convergence results for the SQP solver in Section \ref{sec:sqp_convergence}.
We then discuss the autoencoder architecture used in Section \ref{sec:autoencoder_architecture},
the application of hyper-reduction in Section \ref{sec:hyper_reduction}, 
and the construction of a HR subnet in Section \ref{sec:subnet}.
In Section \ref{sec:error_analysis}, we provide both {\it a posteriori} and {\it a priori} error bounds for the ROM solution in Theorems \ref{thm:a_posteriori_bound} and \ref{thm:a_priori_bound}, respectively. 
We numerically compare the DD LS-ROM and DD NM-ROM performance, both with and without HR, on the 2D steady-state Burgers' equation in Section \ref{sec:numerics} for a number of different problem configurations.
Lastly, we conclude the paper and discuss future directions in Section \ref{sec:conclusion}.

\section{Domain-decomposition FOM formulation} \label{sec:dd_formulation}
This section presents the algebraic domain-decomposition formulation 
\cite{CHoang_YChoi_KCarlberg_2021a}.
We consider a FOM parameterized by $\bmu \in \cD \subset \real^{N_\mu}$. 
Given $\bmu \in \cD$, the FOM is expressed as a parametrized system of nonlinear 
algebraic equations
\begin{equation}\label{eq:fom_residual}
    \br(\bx(\bmu); \bmu) = \bzero,
\end{equation}
where $\br: \real^{N_x}\times \cD \to \real^{N_x}$ denotes the residual and
$\bx(\bmu) \in \real^{N_x}$ denotes the state.
For notational simplicity, the dependence on $\bmu$ is suppressed until needed.
Typically $\br$ corresponds to a discretized PDE (e.g., using finite differences or finite elements)
and in our target applications $\br$ is nonlinear in $\bx$.

Next we decompose the system \eref{eq:fom_residual} into $n_\Omega$ {\em algebraic subdomains}.
Before giving the technical details, we describe the decomposition using a simple example
illustrated in Figure~\ref{fig:dd_example_2D}. 
In this example, suppose the system \eref{eq:fom_residual} is obtained 
from a finite difference discretization with a 5-point stencil  of a scalar PDE in a rectangular domain in $\real^2$
with Dirichlet boundary conditions.
The situation would be similar if the PDE was discretized using linear finite elements on a  regular grid.
Moreover, the decomposition into algebraic subdomains is not limited to the finite difference discretization 
with a 5-point stencil; this discretization is simply used for illustration.
In the left plot in Figure~\ref{fig:dd_example_2D}, the finite difference discretization uses 
a $14\times 5$ grid of $N_x = 70$ nodes. Each node corresponds to a component in 
the vector of unknown states $\bx$ and an  equation in the system \eref{eq:fom_residual}. 
The domain is subdivided into $n_\Omega =2$ subdomains. Nodes near the interface between the
two subdomains are marked by filled circles.
 Because the PDE is discretized using a  5-point stencil, residuals corresponding to nodes 
 marked by filled circles in
subdomain 1 depend on state variables corresponding to nodes marked by filled circles in subdomain 2. Similarly,
residuals corresponding to nodes marked by filled circles in subdomain 2 depend on state variables corresponding to  
nodes marked by filled circles in subdomain 1.
In the right plot in Figure~\ref{fig:dd_example_2D}, these variables are duplicated as components of
 the vectors of the {\em interface states} 
$\bx_1^\Gamma$ and $\bx_2^\Gamma$.  These have to satisfy $\bx_1^\Gamma = \bx_2^\Gamma$
and this compatibility condition will later be enforced via constraints.
The other state variables are the {\em interior states}  $\bx_i^\Omega$, $i =1,2$.
These are the state variables that only enter the residuals corresponding to subdomain $i$.
Next we provide a detailed description of the general case.

\begin{figure}[!htb]
   \begin{center}
	\begin{tikzpicture}[scale=0.85]	

            \fill[fill=red, fill opacity=0.2 ]  (0,0) rectangle (3.75,3);
            \fill[fill=blue, fill opacity=0.2 ]  (3.75,0) rectangle (7.5,3);
            
             \foreach \x in {1,..., 14}
    		\foreach \y in {1, 2, 3, 4, 5}
      			{
			 \draw (0.5*\x,0.5*\y) circle (2pt);
      			}
	    \foreach \x in {7, 8}
    		\foreach \y in {1, 2, 3, 4, 5}
      			{
			  \fill[black] (0.5*\x,0.5*\y) circle (2pt);
      			}
             \draw[decorate, decoration={calligraphic brace, amplitude=5pt}] (0, 3.2) -- (3.75, 3.2) node[pos=0.5, above=8pt, black]{$\br_1$};
            \draw[decorate, decoration={calligraphic brace, amplitude=5pt}] (3.75, 3.2) -- (7.5, 3.2) node[pos=0.5, above=8pt, black]{$\br_2$};
	
	    \fill[fill=blue, fill opacity=0.0 ]  (0.0,-1.1) rectangle (0.1,0.1); 

        \end{tikzpicture} \hfill
        \begin{tikzpicture}[scale=0.85]	

            \fill[fill=red, fill opacity=0.2 ]  (0,0) rectangle (4.3,3);
            \fill[fill=red, fill opacity=0.2 ]  (3.25,0) rectangle (4.3,3);
                
        	      \foreach \x in {1,..., 8}
    		\foreach \y in {1, 2, 3, 4, 5}
      			{
			 \draw (0.5*\x,0.5*\y) circle (2pt);
      			}
	    \foreach \x in {7, 8}
    		\foreach \y in {1, 2, 3, 4, 5}
      			{
			  \fill[black] (0.5*\x,0.5*\y) circle (2pt);
      			}
            \draw[decorate, decoration={calligraphic brace, amplitude=5pt}] (0, 3.2) -- (3.75, 3.2) node[pos=0.5, above=8pt, black]{$\br_1$};
            
            \draw[decorate, decoration={calligraphic brace, mirror, amplitude=5pt}] (0, -0.2) -- (3.25, -0.2) node[pos=0.5, below=4pt, black]{$\bx_1^\Omega$};
           \draw[decorate, decoration={calligraphic brace, mirror, amplitude=5pt}] (3.25, -0.2) -- (4.3, -0.2) node[pos=0.5, below=4pt, black]{$\bx_1^\Gamma$};

            \fill[fill=blue, fill opacity=0.2 ]  (5.2,0) rectangle (9.5,3);
            \fill[fill=blue, fill opacity=0.2 ]  (5.2,0) rectangle (6.2,3);
            
             \foreach \x in {8, ..., 14}
    		\foreach \y in {1, 2, 3, 4, 5}
      			{
			 \draw (2+0.5*\x,0.5*\y) circle (2pt);
      			}
	    \foreach \x in {7, 8}
    		\foreach \y in {1, 2, 3, 4, 5}
      			{
			  \fill[black] (2+0.5*\x,0.5*\y) circle (2pt);
      			}
	            
            \draw[decorate, decoration={calligraphic brace, amplitude=5pt}] (5.7, 3.2) -- (9.5, 3.2) node[pos=0.5, above=8pt, black]{$\br_2$};
            \draw[decorate, decoration={calligraphic brace, mirror, amplitude=5pt}] (6.2, -0.2) -- (9.5, -0.2) node[pos=0.5, below=4pt, black]{$\bx_2^\Omega$};
           \draw[decorate, decoration={calligraphic brace, mirror, amplitude=5pt}] (5.2, -0.2) -- (6.2, -0.2) node[pos=0.5, below=4pt, black]{$\bx_2^\Gamma$};
           
       	\end{tikzpicture} \hfill
   \caption{Left plot: Each node in the domain corresponds to an unknown $\bx$ and an equation in the 
                  system \eref{eq:fom_residual}. The domain is subdivided into $n_\Omega =2$ subdomains.
                  Residuals corresponding to nodes marked by filled circles near the bboundary in subdomain 1 depend on 
                  nodes  marked by filled circles in subdomain 2, 
                  and residuals corresponding to nodes  marked by filled circles near the boundary in subdomain 2 depend on 
                  nodes  marked by filled circles in subdomain 1.
                  Right plot: Variables that enter computations of  residuals in one or more subdomains are duplicated as  interface
                  state variables $\bx_1^\Gamma$ and $\bx_2^\Gamma$. Variables that only enter the computations of 
                  the  residuals in one subdomains are the interior  state variables $\bx_1^\Omega$ and $\bx_2^\Omega$,
                  respectively. Equality $\bx_1^\Gamma = \bx_2^\Gamma$ of  interface state variables will be enforced via constraints.
                  \label{fig:dd_example_2D}}
       \end{center}
 \end{figure}
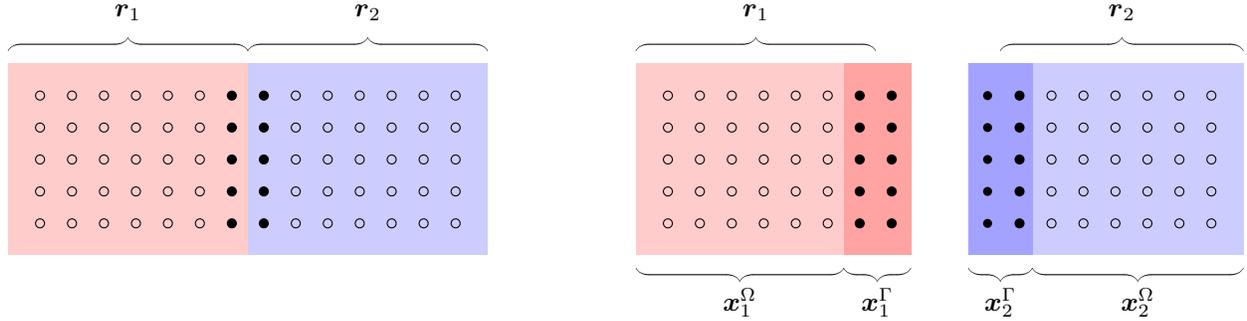

We decompose the system \eref{eq:fom_residual} into $n_\Omega \leq N_x$ {\em algebraic subdomains} by defining so-called residual sampling matrices $\bP_i^r\in\set{0,1}^{N_i^r\times N_x}$ 
and computing subdomain residuals as 
\[
          \bP_i^r\br(\bx)\in \real^{N_i^r}, \quad i=1,\dots, n_\Omega.
\]
The residual sampling matrices are assumed to be {\it algebraically non-overlapping}, i.e.
\[
        \bP_i^r(\bP_j^r)^T = \bzero, \quad \forall \; i\neq j,
\]
and $\sum_{i=1}^{n_\Omega} N_i^r = N_x$.
For problems \eref{eq:fom_residual} arising from a PDE discretization,
the sparsity structure of the monolithic residual function $\br$ implies that subdomain residuals 
$\bP_i^r\br(\bx)$ only depend on a subset of the full state $\bx$. 
Furthermore, the residual corresponding to points at the boundary of subdomain $i$ depend on the state 
$\bx$ at points within subdomain $i$ and at points that belong to neighboring subdomains. 
Therefore, for subdomain $i$, we decompose the state components into  {\it interior states} 
\begin{subequations}  \label{eq:subdomain-states}
\begin{equation}
                \bx_i^\Omega:= \bP_i^\Omega \bx \in \real^{N_i^\Omega}
\end{equation}
and \ {\it interface states} 
\begin{equation} 
       \bx_i^\Gamma := \bP_i^\Gamma \bx \in \real^{N_i^\Gamma},
\end{equation}
\end{subequations}
where
$\bP_i^\Omega \in \set{0, 1}^{N_i^\Omega \times N_x}$ denotes the $i$th interior-state sampling matrix and
$\bP_i^\Gamma \in \set{0, 1}^{N_i^\Gamma \times N_x}$ denotes the $i$th interface-state sampling matrix.
The interior states $\bx_i^\Omega:= \bP_i^\Omega \bx$ only enter the evaluation of the $i$th subdomain
residual $\bP_j^r \br(\bx)$. 
The interface states $\bx_i^\Gamma := \bP_i^\Gamma \bx$ also enter the evaluation of another  subdomain
residual $\bP_j^r \br(\bx)$, $j \not= i$.
Since the $i$th interior states only enter the evaluation of the $i$th subdomain,  the interior-state sampling 
matrices are algebraically non-overlapping,
\[
           \bP_i^\Omega (\bP_j^\Omega)^T = \bzero, \quad \forall \; i \neq j.
\]
The interface state variables are duplicated across one or more subdomains, and we will describe later
how to enforce equality among duplicated  interface state variables.

With these specifications we can now define subdomain residual functions 
$\br_i:\real^{N_i^\Omega}\times \real^{N_i^\Gamma}\to \real^{N_i^r}$ as
\begin{equation}\label{eq:subdomain_residual_def}
  \br_i(\bx_i^\Omega, \bx_i^\Gamma) 
  = \bP_i^r\br\Big( \big(\bP_i^\Omega\big)^T \bx_i^\Omega + \big(\bP_i^\Gamma \big)^T\bx_i^\Gamma \Big).
\end{equation}
Furthermore, the monolithic residual function  \eref{eq:fom_residual} can be decomposed as 
\begin{equation}\label{eq:fom_dd_residual}
  \br(\bx) = \sum_{i=1}^{n_\Omega} \left(\bP_i^r\right)^T \br_i(\bP_i^\Omega \bx, \bP_i^\Gamma \bx), \qquad \forall \; \bx  \in \real^{N_x}.
\end{equation}
Equations \eref{eq:fom_residual}, \eref{eq:subdomain_residual_def}, and \eref{eq:fom_dd_residual} imply that the solution 
\eref{eq:subdomain-states} of \eref{eq:fom_residual} restricted to the $i$th subdomain  satisfies
\begin{equation}\label{eq:fom_subdomain_residual}
  \br_i(\bx_i^\Omega, \bx_i^\Gamma)=\bzero, \quad i=1, \dots, n_\Omega.
\end{equation}

In addition to \eref{eq:fom_subdomain_residual}, 
compatibility conditions must be imposed that enforce equality between overlapping interface states for neighboring subdomains.
These compatibility conditions are enforced by defining  $n_p$ non-overlapping {\em ports}.
Geometrically, the $j$th port is a subset of subdomains. The $j$th port has $N_j^p \leq N_x$ overlapping interface-state variables.
The indices of subdomains that intersect with  the $j$th port are $P(j) \subseteq \set{1, \dots, n_\Omega}$.
Figure \ref{fig:dd_res_states} displays the ports for a $4$-subdomain example configuration.

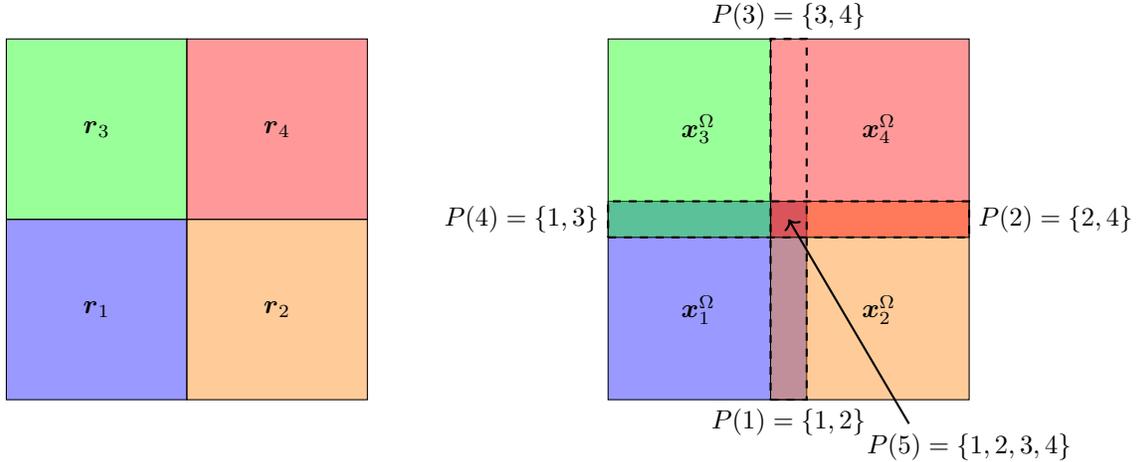
\begin{figure}[H]
  \centering
  
  	\begin{tikzpicture}[scale=0.8]	
            \draw[fill=blue, fill opacity=0.4 ]  (0,0) -- (3,0) -- (3,3) -- (0,3) -- cycle;
            \draw  (1.5,1.5) node{$\br_1$};
            \draw[fill=orange, fill opacity=0.4 ]  (3,0) -- (6,0) -- (6,3) -- (3,3) -- cycle;
            \draw  (4.5,1.5) node{$\br_2$};
            \draw[fill=green, fill opacity=0.4 ]  (0,3) -- (0,6) -- (3,6) -- (3,3) -- cycle;
            \draw  (1.5,4.5) node{$\br_3$};
            \draw[fill=red, fill opacity=0.4 ]  (3,3) -- (6,3) -- (6,6) -- (3,6) -- cycle;
            \draw  (4.5,4.5) node{$\br_4$};
                       
            \draw[fill=blue, fill opacity=0.4 ]  (10,0) -- (13.3,0) -- (13.3,3.3) -- (10,3.3) -- cycle;
            \draw  (11.5,1.5) node{$\bx_1^\Omega$};
            \draw[fill=orange, fill opacity=0.4 ]  (12.7,0) -- (16,0) -- (16,3.3) -- (12.7,3.3) -- cycle;
            \draw  (14.5,1.5) node{$\bx_2^\Omega$};
            \draw[fill=green, fill opacity=0.4 ]  (10,2.7) -- (10,6) -- (12.7,6) -- (12.7,2.7) -- cycle;
            \draw  (11.5,4.5) node{$\bx_3^\Omega$};
            \draw[fill=red, fill opacity=0.4 ]  (12.7,2.7) -- (16,2.7) -- (16,6) -- (12.7,6) -- cycle;
            \draw  (14.5,4.5) node{$\bx_4^\Omega$};
            
            \draw[dashed, thick]  (10,2.7) -- (16,2.7) -- (16,3.3) -- (10,3.3) -- cycle;
            \draw[dashed, thick]  (12.7,0) -- (13.3,0) -- (13.3,6) -- (12.7,6) -- cycle;
            \draw  (13,0) node[below]{$P(1)=\{ 1, 2\}$};
            \draw  (16,3) node[right]{$P(2)=\{2, 4\}$};
            \draw  (13,6) node[above]{$P(3)=\{3, 4\}$};
            \draw  (10,3) node[left]{$P(4)=\{1,3\}$};
            \draw  (16,-0.4) node[below]{$P(5)=\{1, 2, 3, 4\}$};
            \draw[ ->, thick]  (15,-0.4) -- (13,3);
            
        \end{tikzpicture} \hfill
  \caption{
      Left: Residual decomposition using $4$ subdomains. Notice that the residuals do not overlap.
      Right: State decomposition. 
      The regions without overlap correspond to interior states $\bx_i^\Omega$ while regions with overlap correspond to interface 
      states $\bx_i^\Gamma$. 
      The overlapping regions enclosed by black dashed lines represent the ports $P(j)\subset\set{1, \dots, n_\Omega}$.
  }
  \label{fig:dd_res_states}
\end{figure}

Using the ports, the compatibility conditions can be expressed as
\begin{equation}\label{eq:compatibility_conditions}
  \bP_i^j \bx_i^\Gamma = \bP_\ell^j \bx_\ell^\Gamma, \quad i, \ell \in P(j), \; j = 1, \dots, n_p,
\end{equation}
where  $\bP_i^j \in \set{0, 1}^{N_j^p \times N_i^\Gamma}$ denotes the $j$th port sampling matrix for subdomain $i$.
Because the ports are non-overlapping, if $Q(i):=\set{j \; | \: i \in P(j)}$ is the set of ports associated with subdomain $i$,
then 
\begin{align}\label{eq:fom_port_nonoverlapping}
    \bP_i^j(\bP_i^\ell)^T=\bzero, \qquad \forall \; j,\, \ell \in Q(i), \; j \neq \ell,
\end{align}
and the sum of numbers of variables in the ports  associated with subdomain $i$ is equal to the
number of interface variables in the $i$th subdomain, $\sum_{j\in Q(i)} N_j^p = N_i^\Gamma$.

As written, most conditions in  \eref{eq:compatibility_conditions} are redundant. Instead, for port $j$ one needs 
$(|P(j)|-1)N_j^p$ conditions, where $|P(j)|$ denotes cardinality of $P(j)$.  
For example, in Figure \ref{fig:dd_res_states} one needs the 
conditions $\bP_1^1 \bx_1^\Gamma = \bP_2^1 \bx_2^\Gamma$ for the first port $P(1) = \{1, 2\}$,
and the conditions $\bP_1^5 \bx_1^\Gamma = \bP_2^5 \bx_2^\Gamma$, $\bP_2^5 \bx_2^\Gamma = \bP_3^5 \bx_3^\Gamma$, 
$\bP_3^5 \bx_3^\Gamma = \bP_4^5 \bx_4^\Gamma$ for the fifth port $P(5) = \{1, 2, 3, 4\}$.
Removing redundant conditions  \eref{eq:compatibility_conditions}, the 
port compatibility conditions \eref{eq:compatibility_conditions} can be written as
\begin{equation}\label{eq:compatibility_conditions_mat}
        \sum_{i=1}^{n_\Omega} \bA_i \bx_i^\Gamma = \bzero,
\end{equation}
where the $\bA_i\in \set{-1, 0, 1}^{N_{A}\times N_i^\Gamma}$
denote the constraint matrices associated with the port compatibility conditions \eref{eq:compatibility_conditions} and
the total number of compatibility conditions is  $N_{A} = \sum_{j=1}^{n_p}(|P(j)|-1)N_j^p$.
The matrix $(\bA_1, \ldots, \bA_{n_\Omega})$ has full row rank.


In summary, the algebraic DD formulation of the FOM \eref{eq:fom_residual} is given by
\begin{subequations}\label{eq:fom_dd}
  \begin{align}
    \br_i(\bx_i^\Omega, \bx_i^\Gamma) &= \bzero, \qquad i = 1, \dots, n_\Omega, \\
   \sum_{i=1}^{n_\Omega} \bA_i \bx_i^\Gamma &= \bzero.
  \end{align}
\end{subequations}
Associated with \eref{eq:fom_dd} we also consider the nonlinear least-squares problem with equality constraints,
\begin{subequations}\label{eq:fom_dd_nlp}
  \begin{align}
    \min_{(\bx_i^\Omega, \bx_i^\Gamma), i=1, \dots, n_\Omega} \quad &
    \frac{1}{2}\sum_{i=1}^{n_\Omega} \norm{\br_i\left(\bx_i^\Omega, \bx_i^\Gamma\right)}_2^2 \\
    {\rm s.t.} \quad & \sum_{i=1}^{n_\Omega} \bA_i \bx_i^\Gamma = \bzero.
  \end{align}
\end{subequations}

The connections between the formulations \eref{eq:fom_residual}, \eref{eq:fom_dd}, and  \eref{eq:fom_dd_nlp}
are summarized in the following theorem.

\begin{theorem}   \label{th:connection-FOM-DDFOM}
     If $\bx$ solves the FOM \eref{eq:fom_residual} then $(\bx_i^\Omega, \bx_i^\Gamma)$
     with   $\bx_i^\Omega := \bP_i^\Omega \bx$ and $\bx_i^\Gamma := \bP_i^\Gamma \bx$,   $i=1, \dots, n_\Omega$,
     solves the  algebraic DD formulation of the FOM \eref{eq:fom_dd} and vice versa.
     A solution of  \eref{eq:fom_dd} also solves \eref{eq:fom_dd_nlp}, and a solution of \eref{eq:fom_dd_nlp}
     with objective function value equal to zero solves \eref{eq:fom_dd}.
\end{theorem}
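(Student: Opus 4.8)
The plan is to establish the three assertions in the order stated: first the two-way equivalence \eref{eq:fom_residual} $\Leftrightarrow$ \eref{eq:fom_dd}, and then to read off the relationship with the least-squares problem \eref{eq:fom_dd_nlp} almost for free from nonnegativity of its objective.

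For the forward implication of the equivalence, I would start from a solution $\bx$ of \eref{eq:fom_residual} and set $\bx_i^\Omega := \bP_i^\Omega \bx$, $\bx_i^\Gamma := \bP_i^\Gamma \bx$. Substituting into the definition \eref{eq:subdomain_residual_def} gives $\br_i(\bx_i^\Omega, \bx_i^\Gamma) = \bP_i^r \br\big((\bP_i^\Omega)^T\bP_i^\Omega \bx + (\bP_i^\Gamma)^T\bP_i^\Gamma\bx\big)$. The key observation is that $(\bP_i^\Omega)^T\bP_i^\Omega + (\bP_i^\Gamma)^T\bP_i^\Gamma$ is the diagonal $0/1$ matrix selecting exactly those state components on which the $i$th subdomain residual $\bP_i^r\br$ depends; by this locality/sparsity property the masked argument produces the same value as the full vector, so $\br_i(\bx_i^\Omega, \bx_i^\Gamma) = \bP_i^r\br(\bx) = \bzero$. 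The compatibility conditions \eref{eq:compatibility_conditions_mat} then hold automatically, since every $\bx_i^\Gamma$ is a restriction $\bP_i^\Gamma\bx$ of one and the same global vector, so duplicated interface entries agree by construction.

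For the converse I would reconstruct a global state from a solution $(\bx_i^\Omega, \bx_i^\Gamma)$ of \eref{eq:fom_dd}. Because the interior-state sampling matrices are algebraically non-overlapping and the compatibility conditions \eref{eq:compatibility_conditions} force all duplicated interface entries to coincide, there is a well-defined $\bx \in \real^{N_x}$ with $\bP_i^\Omega\bx = \bx_i^\Omega$ and $\bP_i^\Gamma\bx = \bx_i^\Gamma$ for every $i$; concretely one assembles $\bx$ from the non-overlapping interior pieces and a single representative of each interface entry. Feeding this $\bx$ into the decomposition \eref{eq:fom_dd_residual} yields $\br(\bx) = \sum_{i=1}^{n_\Omega} (\bP_i^r)^T \br_i(\bx_i^\Omega, \bx_i^\Gamma) = \bzero$, so $\bx$ solves \eref{eq:fom_residual}. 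I expect the verification that this assembly is consistent — that every interface entry receives the same value no matter which subdomain it is read from, and that the interior and deduplicated interface indices together exhaust $\{1,\dots,N_x\}$ — to be the main obstacle, and it is precisely here that the compatibility conditions and the non-overlap relations are indispensable.

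The remaining two assertions follow from nonnegativity of the objective in \eref{eq:fom_dd_nlp}. If $(\bx_i^\Omega, \bx_i^\Gamma)$ solves \eref{eq:fom_dd}, then each $\br_i$ vanishes, the objective attains the value $0$, and since $\frac{1}{2}\sum_{i=1}^{n_\Omega} \|\br_i\|_2^2 \geq 0$ at every feasible point, this point is a global minimizer; feasibility is supplied by the second equation of \eref{eq:fom_dd}. Conversely, if a solution of \eref{eq:fom_dd_nlp} has objective value zero, then $\sum_{i=1}^{n_\Omega} \|\br_i\|_2^2 = 0$ forces each $\br_i = \bzero$, and the constraint $\sum_{i=1}^{n_\Omega} \bA_i \bx_i^\Gamma = \bzero$ holds by feasibility, so both equations of \eref{eq:fom_dd} are satisfied.
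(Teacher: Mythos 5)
Your proof is correct and takes essentially the same route as the paper, which simply states that the result ``follows immediately from the construction of \eref{eq:fom_dd}''; your write-up supplies the details of exactly that construction-based argument (locality of $\bP_i^r\br$ for the forward direction, consistent reassembly of a global state via the compatibility conditions and \eref{eq:fom_dd_residual} for the converse, and nonnegativity of the least-squares objective for the final two assertions).
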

The proof of Theorem~\ref{th:connection-FOM-DDFOM} follows immediately from the construction of \eref{eq:fom_dd}.

In the FOM context, the constrained nonlinear least-squares problem formulation \eref{eq:fom_dd_nlp} is not needed,
but we include it here because it will become important for the model reduction derivation in Section~\ref{sec:dd_lspg},
where we use a least squares formulation for the subdomain ROMs. 
The constrained nonlinear least-squares formulation  \eref{eq:fom_dd_nlp} of the FOM could be solved
using a Lagrange-Newton sequential quadratic programming (SQP) method with Gauss-Newton Hessian approximation
and the constrained nonlinear least-squares formulation of the NM-ROM corresponding to \eref{eq:fom_dd_nlp} will
be solved using the Lagrange-Newton SQP method discussed in Section~\ref{sec:sqp_solver}.

In principle, an alternative DD formulation of the FOM is possible, which reverses the role of satisfying the subdomain
equations and of the compatibility conditions. Instead of  \eref{eq:fom_dd}, one can impose  the subdomain
equations $\br_i(\bx_i^\Omega, \bx_i^\Gamma) = \bzero$, $i = 1, \dots, n_\Omega$, as constraints and
use a least squares formulation of the compatibility conditions as the objective. This is used, e.g., in
\cite{AIollo_GSambataro_TTaddei_2022a,TTaddei_XXu_LZhang_2023a}. 
However, since we use LSPG-ROMs, which in general do not have zero
residual, these cannot be incorporated as equality constraints. In contrast, the formulation \eref{eq:fom_dd_nlp} can
be used with subdomain  LSPG-ROMs, as we will describe in the next section.


\section{Domain-decomposition ROM}   \label{sec:dd_lspg}
The ROM construction is built on the assumption that the high-dimensional subdomain state variables $\bx_i^\Omega \in \real^{N_i^\Omega}$ and
$\bx_i^\Gamma \in \real^{N_i^\Gamma}$, $i = 1, \ldots, n_\Omega$, can be approximated using low-dimensional variables
$\hbx_i^\Omega \in \real^{n_i^\Omega}$, $n_i^\Omega \ll N_i^\Omega$ and
$\hbx_i^\Gamma \in \real^{N_i^\Gamma}$, $n_i^\Gamma \ll N_i^\Gamma$,  $i = 1, \ldots, n_\Omega$, respectively.
Specifically, we assume that for each subdomain $i$ there exist maps $\bg_i^\Omega: \real^{n_i^\Omega} \to \real^{N_i^\Omega}$ and 
$\bg_i^\Gamma: \real^{n_i^\Gamma} \to \real^{N_i^\Gamma}$ such that
\begin{align}\label{eq:rom_approximations}
    \bx_i^\Omega \approx \bg_i^\Omega(\hbx_i^\Omega), \qquad
    \bx_i^\Gamma \approx \bg_i^\Gamma(\hbx_i^\Gamma), \qquad i = 1, \dots, n_\Omega.
\end{align}
In the traditional LS-ROM the maps $\bg_i^\Omega$ and $\bg_i^\Gamma$ are linear, 
whereas in our NM-ROM these maps are computed via autoencoders/decoders.
Assuming that we have maps $\bg_i^\Omega$ and $\bg_i^\Gamma$ such that  \eref{eq:rom_approximations} holds,
we discuss how to construct the ROM in Section~\ref{sec:DD-LSPG-ROM}. 
Specifically, our ROM is based on the constrained nonlinear least-squares formulation \eref{eq:fom_dd_nlp} of the FOM.
One issue in the ROM construction based on  \eref{eq:fom_dd_nlp} is the formulation of compatibility constraints for the ROM.
In  Section~\ref{sec:DD-LSPG-ROM} we use a formulation following  \cite{CHoang_YChoi_KCarlberg_2021a} and
in Section \ref{sec:rom_port_compatibility} we provide an alternative formulation of the ROM compatibility constraints 
by constructing the maps $\bg_i^\Gamma(\hbx_i^\Gamma)$ in a suitable way.
The detailed construction of maps $\bg_i^\Omega$ and 
$\bg_i^\Gamma$ such that \eref{eq:rom_approximations} holds 
is discussed in Sections \ref{sec:lsrom}, \ref{sec:nmrom}, and \ref{sec:autoencoder_architecture}.
Specifically, we will review  the traditional LS-ROM in Section \ref{sec:lsrom}.
Sections \ref{sec:nmrom} and \ref{sec:autoencoder_architecture} discuss how to compute these maps via the NM-ROM approach.

\subsection{Least-squares formulation}  \label{sec:DD-LSPG-ROM}
Given maps $\bg_i^\Omega$ and $\bg_i^\Gamma$ such that \eref{eq:rom_approximations} holds, a naive way of computing the ROM
is to simply replace $\bx_i^\Omega$ and $\bx_i^\Gamma$ in the constrained nonlinear least-squares formulation  
\eref{eq:fom_dd_nlp} of the FOM by $\bg_i^\Omega(\hbx_i^\Omega)$ and $\bg_i^\Gamma(\hbx_i^\Gamma)$.
An evaluation of this ROM requires the solution of 
\begin{subequations}\label{eq:dd_lspg_NLP0}
  \begin{align}
    \min_{(\hbx_i^\Omega, \hbx_i^\Gamma), i=1, \dots, n_\Omega} \quad &
    \frac{1}{2}\sum_{i=1}^{n_\Omega} \norm{ \br_i\left(\bg_i^\Omega\big(\hbx_i^\Omega\big), \bg_i^\Gamma\big(\hbx_i^\Gamma \big)\right)}_2^2 \\
    {\rm s.t.} \quad & \sum_{i=1}^{n_\Omega}  \bA_i \bg_i^\Gamma(\hbx_i^\Gamma) = \bzero.
  \end{align}
\end{subequations}
This corresponds to a (naive) LSPG-ROM.
There are two issues with this formulation. 

The first issue is that, just as in the case of the classical LSPG-ROM, the complexity of the evaluation of the subdomain residuals, i.e.,
$\big(\hbx_i^\Omega, \hbx_i^\Gamma \big) \to$  
$\big( \bg_i^\Omega(\hbx_i^\Omega), \bg_i^\Gamma(\hbx_i^\Gamma) \big) \to$ $\br_i\big(\bg_i^\Omega\big(\hbx_i^\Omega\big), \bg_i^\Gamma\big(\hbx_i^\Gamma \big) \big)$
scales with the size $N_i^\Omega$ and $N_i^\Gamma$ of the FOM.  
This issue is addressed using so-called hyper-reduction (HR). 
See, e.g., \cite{CFarhat_SGrimberg_AManzoni_AQuarteroni_2021a} for an overview. 
HR replaces the residual $\br_i\big(\bg_i^\Omega\big(\hbx_i^\Omega\big), \bg_i^\Gamma\big(\hbx_i^\Gamma \big) \big)$ in (\ref{eq:dd_lspg_NLP0}a)
by $\bB_i \br_i\big(\bg_i^\Omega\big(\hbx_i^\Omega\big), \bg_i^\Gamma\big(\hbx_i^\Gamma \big) \big)$, where
 $\bB_i \in \real^{N_i^B \times N_i^r}$, $N_i^B \leq N_i^r$, is determined  by the HR approach.
For example, $\bB_i = \bI$ corresponds to vanilla LSPG, 
$\bB_i = \bZ_i$, where $\bZ_i \in \set{0, 1}^{N_i^z\times N_i^r}$, $N_i^z < N_i^r$, denotes a row-sampling matrix, corresponds to collocation HR, 
and $\bB_i = (\bZ_i \bPhi_i^r)^\dag \bZ_i$, where $\bZ_i$ is as before, $\bPhi_i^r \in \real^{N_i^r \times n_i^r}, i=1, \dots, n_\Omega$, 
denotes a reduced subspace for the corresponding subdomain residual and the superscript $\dag$ denotes the Moore-Penrose pseudoinverse,
corresponds to gappy POD HR \cite{REverson_LSirovich_1995,choi2020sns,lauzon2022s}.
Further details on HR for our DD NM-ROM are discussed in Section \ref{sec:hyper_reduction}. 
For the application of HR to DD LS-ROM, we refer the reader to \cite{CHoang_YChoi_KCarlberg_2021a}.

The second issue with \eref{eq:dd_lspg_NLP0} is that it involves the same number of constraints  (\ref{eq:dd_lspg_NLP0}b) as the FOM
\eref{eq:fom_dd_nlp}, but fewer degrees of freedom to satisfy them. 
In the extreme case, it may be impossible to satisfy the constraints (\ref{eq:dd_lspg_NLP0}b).
One approach, following \cite{CHoang_YChoi_KCarlberg_2021a}, is to replace $\bA_i$ in  (\ref{eq:dd_lspg_NLP0}b) by $\bC \bA_i$, 
where $\bC\in \real^{n_C\times N_{A}}$, $\; n_C \ll N_A$, is a test matrix that converts (\ref{eq:dd_lspg_NLP0}b) into a 
so-called ``weak compatibility constraint". 
We will choose $\bC$ to be a Gaussian matrix, but in principle other choices of $\bC$ can be used.

To summarize, given maps $\bg_i^\Omega: \real^{n_i^\Omega} \to \real^{N_i^\Omega}$ and 
$\bg_i^\Gamma: \real^{n_i^\Gamma} \to \real^{N_i^\Gamma}$  such that \eref{eq:rom_approximations} holds,
given HR matrices $\bB_i \in \real^{N_i^B \times N_i^r}$, $N_i^B \leq N_i^r$, $i = 1, \ldots, n_\Omega$, and given
$\bC\in \real^{n_C\times N_A}$, $\; n_C \ll N_A$, our DD-LSPG-ROM is evaluated by solving
\begin{subequations}\label{eq:dd_lspg_NLP}
  \begin{align}
    \min_{(\hbx_i^\Omega, \hbx_i^\Gamma), i=1, \dots, n_\Omega} \quad &
    \frac{1}{2}\sum_{i=1}^{n_\Omega} \norm{\bB_i \br_i\left(\bg_i^\Omega\left(\hbx_i^\Omega\right), \bg_i^\Gamma\left(\hbx_i^\Gamma \right)\right)}_2^2 \\
    {\rm s.t.} \quad & \sum_{i=1}^{n_\Omega} \bC \bA_i \bg_i^\Gamma(\hbx_i^\Gamma) = \bzero.
  \end{align}
\end{subequations}
The DD-LSPG-ROM formulation \eref{eq:dd_lspg_NLP} will be referred to as the weak FOM-port constraint (WFPC) formulation.

While the FOM \eref{eq:fom_dd} or \eref{eq:fom_dd_nlp} has linear constraints,
the WFPC formulation has nonlinear constraints in general.
Corresponding to $\bg_i^\Gamma: \real^{n_i^\Gamma} \to \real^{N_i^\Gamma}$ is a function
$\bh_i^\Gamma: \real^{N_i^\Gamma} \to \real^{n_i^\Gamma}$ such that
$\big\| \bg_i^\Gamma\big( \bh_i^\Gamma\big( \bx_i^{\Gamma,\rm train} \big) \big) - \bx_i^{\Gamma,\rm train} \big\|$
is small for some training/snapshot data  $\bx_i^{\Gamma,\rm train}$, $i = 1, \ldots, n_\Omega$,
that satisfy the linear FOM constraints (\ref{eq:fom_dd}b).
See Sections ~\ref{sec:lsrom} and \ref{sec:nmrom}.
Thus $\sum_{i=1}^{n_\Omega} \bC \bA_i \bg_i^\Gamma(\hbx_i^\Gamma)$ is guaranteed to be small
at these training/snapshot data. Existence of points that satisfy (\ref{eq:dd_lspg_NLP}b) in the nonlinear
case is still an open issue. However, in our numerical examples we have not observed any issues related to existence
of feasible points for \eref{eq:dd_lspg_NLP}.
The existence of solutions of \eref{eq:dd_lspg_NLP} can be guaranteed under mild conditions that are typical
for optimization problems.
\begin{theorem}   \label{th:dd_lspg_NLP_solution}
     Let $\widetilde{\bx}_i^\Gamma$, $i = 1, \ldots, n_\Omega$, satisfy the constraints (\ref{eq:dd_lspg_NLP}b)
     and let $\widetilde{\bx}_i^\Omega$, $i = 1, \ldots, n_\Omega$, be arbitrary.
     If the residual function $\br$ and the maps $\bg_i^\Omega$, $\bg_i^\Gamma$, $i = 1, \ldots, n_\Omega$, are continuous and if
     the level set
     \begin{align*}
        L =  \Big\{ (\hbx_1^\Omega, \hbx_1^\Gamma, \ldots, \hbx_{n_\Omega}^\Omega, \hbx_{n_\Omega}^\Gamma ) \; : \; &
                    \sum_{i=1}^{n_\Omega} \bC \bA_i \bg_i^\Gamma(\hbx_i^\Gamma) = \bzero, \\
               &    \sum_{i=1}^{n_\Omega} \norm{\bB_i \br_i\left(\bg_i^\Omega\left(\hbx_i^\Omega\right), 
                                                             \bg_i^\Gamma\left(\hbx_i^\Gamma \right)\right)}_2^2 
                  \le \sum_{i=1}^{n_\Omega} \norm{\bB_i \br_i\left(\bg_i^\Omega\left(\widetilde{\bx}_i^\Omega\right), 
                                                             \bg_i^\Gamma\left(\widetilde{\bx}_i^\Gamma \right)\right)}_2^2 
               \Big\}
     \end{align*}
     is bounded, then \eref{eq:dd_lspg_NLP} has a solution.
\end{theorem}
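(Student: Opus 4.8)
The plan is to apply the Weierstrass extreme value theorem to the objective restricted to the bounded level set $L$, and then argue that a minimizer over $L$ is in fact a global minimizer of the constrained problem \eref{eq:dd_lspg_NLP}. Write
$f(\hbx_1^\Omega, \hbx_1^\Gamma, \ldots, \hbx_{n_\Omega}^\Omega, \hbx_{n_\Omega}^\Gamma) = \frac{1}{2}\sum_{i=1}^{n_\Omega} \norm{\bB_i \br_i(\bg_i^\Omega(\hbx_i^\Omega), \bg_i^\Gamma(\hbx_i^\Gamma))}_2^2$
for the objective and let $F$ denote the feasible set defined by the weak compatibility constraint (\ref{eq:dd_lspg_NLP}b).

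First I would establish continuity of $f$. Since $\br$ is continuous by hypothesis, the subdomain residual $\br_i$ defined in \eref{eq:subdomain_residual_def} is continuous as a composition of $\br$ with linear maps; composing further with the continuous maps $\bg_i^\Omega, \bg_i^\Gamma$, applying the linear map $\bB_i$, and taking squared norms and a finite sum shows $f$ is continuous. The same reasoning shows that the constraint map $\hbx \mapsto \sum_{i=1}^{n_\Omega} \bC \bA_i \bg_i^\Gamma(\hbx_i^\Gamma)$ is continuous, so its zero set $F$ is closed as the preimage of $\set{\bzero}$.

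Next I would assemble compactness. The set $L$ in the statement is exactly $L = F \cap \set{ f \le f(\widetilde{\bx}) }$, where $\widetilde{\bx}$ denotes the given point with interface components $\widetilde{\bx}_i^\Gamma$ feasible and interior components $\widetilde{\bx}_i^\Omega$ arbitrary. As the intersection of the closed set $F$ with the closed sublevel set $\set{ f \le f(\widetilde{\bx}) }$, the set $L$ is closed; it is bounded by hypothesis, hence compact in the finite-dimensional state space; and it is nonempty since $\widetilde{\bx} \in L$. By the Weierstrass theorem, $f$ attains a minimum over $L$ at some point $\hbx^\star$.

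Finally I would upgrade this to a global minimizer over $F$. For any feasible point $\hbx \in F$, either $f(\hbx) > f(\widetilde{\bx}) \ge f(\hbx^\star)$, or else $f(\hbx) \le f(\widetilde{\bx})$ so that $\hbx \in L$ and thus $f(\hbx) \ge f(\hbx^\star)$ by minimality over $L$; in either case $f(\hbx) \ge f(\hbx^\star)$. Hence $\hbx^\star$ solves \eref{eq:dd_lspg_NLP}. I do not expect a genuine obstacle here: this is the standard coercivity-free existence argument, and the only real subtlety is that $F$ itself need not be bounded, which is precisely why the boundedness hypothesis is placed on the level set $L$ rather than on the full feasible set. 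One should merely take care that the interior variables $\hbx_i^\Omega$ are controlled by the level-set restriction and not left unconstrained, but this is already built into the definition of $L$.
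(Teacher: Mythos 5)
Your proposal is correct and follows essentially the same route as the paper: restrict the minimization to the compact set $L$, invoke Weierstrass, and observe that the resulting minimizer is also a minimizer over the full feasible set. You merely spell out the details the paper leaves implicit (continuity of the composed objective, closedness and nonemptiness of $L$, and the case split showing the minimizer over $L$ is global), all of which are sound.
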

\begin{proof}
   If $(\hbx_i^\Omega, \hbx_i^\Gamma)$, $i=1, \dots, n_\Omega$, solves \eref{eq:dd_lspg_NLP},
   then it also solves the minimization problem with the constraint 
   $(\hbx_1^\Omega, \hbx_1^\Gamma, \ldots, \hbx_{n_\Omega}^\Omega, \hbx_{n_\Omega}^\Gamma ) \in L$ added.
   The feasible set of this new minimization problem is compact, the objective function is continuous, and therefore
   this  minimization problem has a solution, which is also a solution of \eref{eq:dd_lspg_NLP}.
 \end{proof}

Instead of the weak compatibility constraint (\ref{eq:dd_lspg_NLP}b) one can also construct the maps
$\bg_i^\Gamma$, $i = 1, \ldots, n_\Omega$, such that compatibility is enforced strongly for appropriate components 
of $\bg_i^\Gamma(\hbx_i^\Gamma)$, $i = 1, \ldots, n_\Omega$. This approach is introduced in the following  Section \ref{sec:rom_port_compatibility}.


\subsection{Strong ROM-port constraints}\label{sec:rom_port_compatibility}
In the general formulation, the maps \eref{eq:rom_approximations} are computed separately for each subdomain $i$.
However, since the interface variables $\bx_i^\Gamma$,  $\bx_\ell^\Gamma$ are identical on each port $j$
associated with the subdomains $i, \ell$, i.e., on each port $j$ with $i, \ell \in P(j)$
(see \eref{eq:compatibility_conditions}), one can instead reduce the interface variables on each port
and then combine the reduced port interface variables to a reduced interface variable.
By \eref{eq:compatibility_conditions} the interface variables for port $j$ must satisfy
\begin{align}\label{eq:port_xp}
     \bx_j^p =  \bP_i^j  \bx_i^\Gamma \in \real^{N_j^p}, \qquad \forall \; i \in P(j).
\end{align}
For each port $j$ we reduce the FOM port variables $ \bx_j^p$, i.e., 
for each port $j$  we compute a single map $\bg_j^p:\real^{n_j^p}\to \real^{N_j^p}$, where
\begin{align}\label{eq:port_g}
    \bg_j^p\left( \hbx_j^p \right) \approx  \bx_j^p = \bP_i^j \bx_i^\Gamma, \qquad \forall \; i \in P(j).
\end{align}
The reduced interface variable $\hbx_i^\Gamma$ is now computed by concatenating 
all port variables $\hbx_j^p$ with $i \in P(j)$.
This leads to the ROM port sampling matrices $\hbP_i^j \in \set{0, 1}^{n_j^p \times n_i^\Gamma}$ 
which are defined through
\begin{equation}\label{eq:rom_port_matrix}
    \hbx_j^p = \hbP_i^j \hbx_i^\Gamma, \qquad i  \in P(j). 
\end{equation}
Equation \eref{eq:rom_port_matrix} implies that on the $j$th port we have
\begin{equation}\label{eq:rom_port_compatibility}
     \hbP_i^j \hbx_i^\Gamma = \hbP_\ell^j \hbx_\ell^\Gamma, \qquad i, \, \ell \in P(j). 
\end{equation}
To introduce parallelism, ROM interface variables $\hbx_i^\Gamma$ are introduced for each
subdomain, and are coupled by enforcing \eref{eq:rom_port_compatibility}.
By construction, the ROM ports are non-overlapping, i.e. 
\begin{align}\label{eq:rom_port_nonoverlapping}
    \hbP_i^j\left(\hbP_i^\ell\right)^T = \bzero, \qquad \forall \; j,\, \ell\in Q(i), \; j \neq \ell, 
\end{align}
and  $n_i^\Gamma = \sum_{j\in Q(i)} n_j^p$. 
As discussed in Section \ref{sec:dd_formulation}, after removing redundant conditions in \eref{eq:rom_port_compatibility}, 
one can write the ROM port compatibility conditions \eref{eq:rom_port_compatibility} as 
\begin{align}\label{eq:rom_strong_constraint}
    \sum_{i=1}^{n_\Omega} \hbA_i \hbx_i^\Gamma = \bzero, 
\end{align}
where $\hbA_i \in \set{-1, 0, 1}^{n_A \times n_i^\Gamma}$, \; $n_A = \sum_{j=1}^{n_p}(|P(j)|-1)n_j^p$, 
denote the constraint matrices associated with port compatibility conditions \eref{eq:rom_port_compatibility}. 
The matrix 
\begin{equation} \label{eq:dd_lspg_NLP_rom_port}
      (\hbA_1, \dots, \hbA_{n_\Omega}) \in \real^{n_A \times \sum_{i=1}^{n_\Omega} n_i^\Gamma}
\end{equation}
has full row rank $n_A$, and $n_A < \sum_{i=1}^{n_\Omega} n_i^\Gamma$.

The map $\bg_i^\Gamma:\real^{n_i^\Gamma}\to \real^{N_i^\Gamma}$ that approximates the interface state $\bx_i^\Gamma$ 
is implied by the port maps $\bg_j^p$.
To see this, note that the FOM compatibility conditions \eref{eq:compatibility_conditions} and the non-overlapping 
condition \eref{eq:fom_port_nonoverlapping} allow one to rewrite $\bx_i^\Gamma$ as 
\begin{align}\label{eq:interface_port_decomposition}
    \bx_i^\Gamma = \sum_{j \in Q(i)} (\bP_i^j)^T \bP_i^j \bx_i^\Gamma. 
\end{align}
Thus, using \eref{eq:port_g}, \eref{eq:rom_port_matrix}, and \eref{eq:interface_port_decomposition}
the map $\bg_i^\Gamma:\real^{n_i^\Gamma}\to \real^{N_i^\Gamma}$ that approximates the interface state $\bx_i^\Gamma$ 
is given by
\begin{align}\label{eq:gamma_map_port}
    \bg_i^\Gamma(\hbx_i^\Gamma) := \sum_{j \in Q(i)} (\bP_i^j)^T\bg_j^p
    \left(\hbP_i^j \hbx_i^\Gamma\right).
\end{align}
In particular, the definition \eref{eq:gamma_map_port} of $\bg_i^\Gamma$ and the ROM compatibility conditions \eref{eq:rom_port_compatibility} imply that 
\begin{align*}
    \bP_i^j\bg_i^\Gamma(\hbx_i^\Gamma) = 
    \bg_j^p\left(\hbP_i^j\hbx_i^\Gamma\right) = 
    \bg_j^p\left(\hbP_\ell^j \hbx_\ell^\Gamma\right) =
    \bP_\ell^j \bg_\ell^\Gamma(\hbx_\ell^\Gamma)
\end{align*}
for all $i, \ell\in P(j)$ and for all ports $P(j)$. This implies that strong compatibility holds for the FOM ports:
\begin{align*}
    \sum_{i=1}^{n_\Omega} \bA_i\bg_i^\Gamma(\hbx_i^\Gamma) = \bzero.
\end{align*}

In summary, if port maps $\bg_j^p$ are constructed such that  \eref{eq:port_g} holds and the implied
interface maps $\bg_i^\Gamma$ are \eref{eq:gamma_map_port}, then
the  DD-LSPG-ROM   is evaluated by solving 
\begin{subequations}\label{eq:dd_lspg_NLP_rom_port}
  \begin{align}
    \min_{(\hbx_i^\Omega, \hbx_i^\Gamma), i=1, \dots, n_\Omega} \quad &
    \frac{1}{2}\sum_{i=1}^{n_\Omega} \norm{\bB_i \br_i\left(\bg_i^\Omega\left(\hbx_i^\Omega\right), \bg_i^\Gamma\left(\hbx_i^\Gamma \right)\right)}_2^2 \\
    {\rm s.t.} \quad & \sum_{i=1}^{n_\Omega} \hbA_i \hbx_i^\Gamma = \bzero.
  \end{align}
\end{subequations}
The formulation \eref{eq:dd_lspg_NLP_rom_port} will be referred to as the strong ROM-port constraint (SRPC) formulation.

In contrast to \eref{eq:dd_lspg_NLP}  the constraints in \eref{eq:dd_lspg_NLP_rom_port} are linear
and the set of feasible points for  \eref{eq:dd_lspg_NLP_rom_port} is the null-space of the constraint
matrix \eref{eq:dd_lspg_NLP_rom_port}. Thus existence of feasible points for \eref{eq:dd_lspg_NLP_rom_port}
is now trivial. 
Existence of solutions of \eref{eq:dd_lspg_NLP_rom_port} can be guaranteed analogously to
Theorem~\ref{th:dd_lspg_NLP_solution}ii.

\begin{theorem}   \label{th:dd_lspg_NLP_rom_port_solution}
    i. The null-space of the constraint
       matrix \eref{eq:dd_lspg_NLP_rom_port} has dimension $(\sum_{i=1}^{n_\Omega} n_i^\Gamma) - n_A \ge 1$.

    ii.  Let $\widetilde{\bx}_i^\Gamma$, $i = 1, \ldots, n_\Omega$, satisfy the constraints (\ref{eq:dd_lspg_NLP_rom_port}b)
     and let $\widetilde{\bx}_i^\Omega$, $i = 1, \ldots, n_\Omega$, be arbitrary.
     If the residual function $\br$ and the maps $\bg_i^\Omega$, $\bg_i^\Gamma$, $i = 1, \ldots, n_\Omega$, are continuous and if
     the level set
     \begin{align*}
        L =  \Big\{ (\hbx_1^\Omega, \hbx_1^\Gamma, \ldots, \hbx_{n_\Omega}^\Omega, \hbx_{n_\Omega}^\Gamma ) \; : \; &
                    \sum_{i=1}^{n_\Omega}  \hbA_i \hbx_i^\Gamma = \bzero, \\
               &    \sum_{i=1}^{n_\Omega} \norm{\bB_i \br_i\left(\bg_i^\Omega\left(\hbx_i^\Omega\right), 
                                                             \bg_i^\Gamma\left(\hbx_i^\Gamma \right)\right)}_2^2 
                  \le \sum_{i=1}^{n_\Omega} \norm{\bB_i \br_i\left(\bg_i^\Omega\left(\widetilde{\bx}_i^\Omega\right), 
                                                             \bg_i^\Gamma\left(\widetilde{\bx}_i^\Gamma \right)\right)}_2^2 
               \Big\}
     \end{align*}
     is bounded, then \eref{eq:dd_lspg_NLP_rom_port} has a solution.
\end{theorem}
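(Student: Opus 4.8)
The plan is to treat the two parts separately: part (i) is a short linear-algebra computation, while part (ii) mirrors, almost verbatim, the compactness argument already used in the proof of Theorem~\ref{th:dd_lspg_NLP_solution}.

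For part (i), I would start from the fact recorded just before the theorem, namely that the constraint matrix $(\hbA_1, \dots, \hbA_{n_\Omega}) \in \real^{n_A \times \sum_{i=1}^{n_\Omega} n_i^\Gamma}$ has full row rank $n_A$. By the rank--nullity theorem its null-space then has dimension $(\sum_{i=1}^{n_\Omega} n_i^\Gamma) - n_A$, which is exactly the value claimed. To obtain the lower bound $\ge 1$, I would re-express both quantities in terms of the port data. Using $n_i^\Gamma = \sum_{j \in Q(i)} n_j^p$ together with the equivalence $j \in Q(i) \iff i \in P(j)$, I would interchange the order of summation:
\[
   \sum_{i=1}^{n_\Omega} n_i^\Gamma = \sum_{i=1}^{n_\Omega} \sum_{j \in Q(i)} n_j^p = \sum_{j=1}^{n_p} |P(j)|\, n_j^p .
\]
Subtracting $n_A = \sum_{j=1}^{n_p} (|P(j)|-1)\, n_j^p$ then leaves $\sum_{i=1}^{n_\Omega} n_i^\Gamma - n_A = \sum_{j=1}^{n_p} n_j^p \ge 1$, since in the coupled decomposition there is at least one port carrying a positive number of reduced variables. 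This settles both the exact dimension and its strict positivity.

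For part (ii), I would reproduce the argument in the proof of Theorem~\ref{th:dd_lspg_NLP_solution}, the only structural change being that the weak nonlinear constraint (\ref{eq:dd_lspg_NLP}b) is now replaced by the linear constraint (\ref{eq:dd_lspg_NLP_rom_port}b). First I would observe that $L$ is nonempty: the point $(\widetilde{\bx}_1^\Omega, \widetilde{\bx}_1^\Gamma, \dots, \widetilde{\bx}_{n_\Omega}^\Omega, \widetilde{\bx}_{n_\Omega}^\Gamma)$ satisfies the constraint by hypothesis and realizes the objective value appearing on the right-hand side of the level-set inequality, hence lies in $L$. The linear constraint defines a closed set, and continuity of $\br$, $\bg_i^\Omega$, $\bg_i^\Gamma$ makes the objective continuous so that its sublevel set is closed; thus $L$, being their intersection, is closed, and it is bounded by assumption, so $L$ is compact. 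A continuous function attains its minimum on a compact set, so the objective has a minimizer over $L$. Finally I would argue this is a global minimizer over the whole feasible set: any feasible point outside $L$ has objective value exceeding the bound, which is itself $\ge$ the minimum attained over $L$, so no feasible point outside $L$ can improve on it.

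The only real content is the summation interchange in part (i) that converts the per-subdomain interface counts into per-port counts; once that identity is in hand the dimension formula and the strict inequality are immediate. Part (ii) presents no obstacle, since the linearity of the new constraint only makes the feasible set nicer (it is in fact an affine subspace) than in Theorem~\ref{th:dd_lspg_NLP_solution}, and the compactness-plus-continuity argument carries over unchanged.
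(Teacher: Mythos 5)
Your proposal is correct and follows essentially the same route as the paper, which disposes of part (i) by citing the full row rank $n_A$ and the inequality $n_A < \sum_{i=1}^{n_\Omega} n_i^\Gamma$ recorded just before the theorem, and of part (ii) by the same compactness-plus-Weierstrass argument used for Theorem~\ref{th:dd_lspg_NLP_solution}. Your summation interchange $\sum_{i} n_i^\Gamma - n_A = \sum_{j} n_j^p \ge 1$ and the explicit nonemptiness/closedness checks for $L$ simply supply the details the paper leaves implicit.
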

\begin{proof}
   The first part follows immediately from the properties of the constraint
    matrix \eref{eq:dd_lspg_NLP_rom_port}.
   The proof of ii.\ is analogous to the proof of Theorem~\ref{th:dd_lspg_NLP_solution}ii.
 \end{proof}

So far, we have specified our DD-LSPG-ROM  \eref{eq:dd_lspg_NLP}  or \eref{eq:dd_lspg_NLP_rom_port}
given maps $\bg_i^\Omega$ and $\bg_i^\Gamma$ such that \eref{eq:rom_approximations} holds,
or given maps $\bg_i^\Omega$, $\bg_j^p$ and implied interface maps  \eref{eq:gamma_map_port}
such that \eref{eq:rom_approximations} holds.
Next we discuss how these maps can be computed. 
In the following Section~\ref{sec:lsrom}
we first review traditional approaches based on linear subspaces to compute $\bg_i^\Omega$ and $\bg_i^\Gamma$ 
(or $\bg_i^\Omega$ and $\bg_j^p$).
In Section~\ref{sec:nmrom} we will then introduce the nonlinear-manifold ROM.


\subsection{Linear-subspace ROM} \label{sec:lsrom}
First we review linear subspace approximation to construct the maps $\bg_i^\Omega$ and $\bg_i^\Gamma$, 
or $\bg_i^\Omega$ and $\bg_j^p$. We will refer to resulting ROM as LS-ROM. 
The LS-ROM approach supposes that the state solutions of the FOM are contained in a low-dimensional linear subspace. 
A basis for the linear subspace is then computed,  resulting in a ROM whose state consists of the generalized coordinates 
of the state solution in the reduced subspace. 
The use of LS-ROM for the DD problem \eref{eq:dd_lspg_NLP} has already been considered in \cite{CHoang_YChoi_KCarlberg_2021a},
where the LS-ROM bases are computed using POD, but in principle any choice of basis can be used. 
The numerics in Section \ref{sec:numerics} also use POD for consistency with previous works.
We briefly review POD here for completeness. 
A thorough treatment of POD can be found in \cite{MHinze_SVolkwein_2005a}. 

As mentioned above, the LS-ROM approach approximates the FOM states $\bx_i^\Omega$, $\bx_i^\Gamma$ in a linear subspace. 
Hence $\bg_i^\Omega:\real^{n_i^\Omega} \to \real^{N_i^\Omega}$ and $\bg_i^\Gamma:\real^{n_i^\Gamma}\to \real^{N_i^\Gamma}$ 
are linear maps,
\begin{align*}
    \bg_i^\Omega: & \quad \hbx_i^\Omega \mapsto \bPhi_i^\Omega \hbx_i^\Omega, &
    \bg_i^\Gamma: & \quad \hbx_i^\Gamma \mapsto \bPhi_i^\Gamma\hbx_i^\Gamma, 
\end{align*}
where $\bPhi_i^\Omega \in \real^{N_i^\Omega \times n_i^\Omega}$ and $\bPhi_i^\Gamma \in \real^{N_i^\Gamma \times n_i^\Gamma}$ are basis matrices corresponding to the reduced linear subspaces. 
Consequently, the Jacobians $\frac{d}{d \hbx_i^\Omega}  \bg_i^\Omega(\hbx_i^\Omega) = \bPhi_i^\Omega$ and 
$\frac{d}{d \hbx_i^\Gamma} \bg_i^\Gamma (\hbx_i^\Gamma) = \bPhi_i^\Gamma$ are constant
and  do not need to be recomputed at each iteration of the SQP solver described in Section \ref{sec:sqp_solver}
that is used to solve  \eref{eq:dd_lspg_NLP} or   \eref{eq:dd_lspg_NLP_rom_port}. 

The POD bases are computed by minimizing the reconstruction error for a set of snapshots. 
First we focus on constructing POD bases for the WFPC formulation.
Recall that the residual functions $\br_i$ are parameterized with parameter space $\cD \subset\real^{N_\mu}$. 
Let $\set{\bmu_\ell^{\rm train}}_{\ell=1}^{n_\mu}\subset \cD$ be a set of training parameters, and solve the DD FOM \eref{eq:fom_dd} for each parameter $\bmu_\ell^{\rm train}$ to obtain FOM solutions 
$(\bx_i^\Omega(\bmu_\ell^{\rm train}), \bx_i^\Gamma(\bmu_\ell^{\rm train})), \; i=1, \dots, n_\Omega.$ 
Of course, one can solve the monolithic, single-domain FOM \eref{eq:fom_residual} at $\bmu = \bmu_\ell^{\rm train}$, and restrict the solution $\bx(\bmu_\ell^{\rm train})$ to the subdomain interior and interface states,
$\bx_i^\Omega(\bmu_\ell^{\rm train}) = \bP_i^\Omega \bx(\bmu_\ell^{\rm train})$, 
$\bx_i^\Gamma(\bmu_\ell^{\rm train}) = \bP_i^\Gamma \bx(\bmu_\ell^{\rm train})$.
One then computes bases $\bPhi_i^\Omega$ and $\bPhi_i^\Gamma$ using the SVD applied
to snapshot matrices for the interior and interface states 
\begin{subequations}\label{eq:snapshot_matrices}
    \begin{align}
    \bX_i^\Omega &= 
    \begin{bmatrix}
        \bx_i^\Omega(\bmu_1^{\rm train}) & \dots & \bx_i^\Omega(\bmu_{n_\mu}^{\rm train})
    \end{bmatrix} \in \real^{N_i^\Omega \times n_\mu}, \\
    \bX_i^\Gamma &= 
    \begin{bmatrix}
        \bx_i^\Gamma(\bmu_1^{\rm train}) & \dots & \bx_i^\Gamma(\bmu_{n_\mu}^{\rm train})
    \end{bmatrix} \in \real^{N_i^\Gamma \times n_\mu}. 
\end{align}
\end{subequations}
The process is the same for $\bPhi_i^\Omega$ and $\bPhi_i^\Gamma$ and therefore we drop
the superscript $\Omega$ or $\Gamma$ and describe the process to compute a basis  $\bPhi_i$ from
a generic snapshot matrix $\bX_i \in  \real^{N_i \times n_\mu}$.

One computes the `thin' SVD $\bX_i = \bU_i \bSigma_i \bV_i^T$ of the snapshot matrix,
where 
$\bU_i \in \real^{N_i \times m_i}$ is the matrix of left singular vectors, 
$\bSigma_i \in \real^{m_i \times m_i}$ is the diagonal matrix of singular values
$\sigma_1 \geq \sigma_2 \geq \dots \geq \sigma_{m_i} \ge 0$,
$\bV_i \in \real^{n_\mu \times m_i}$ is the matrix of right singular vectors, and
$m_i = \min\set{N_i, n_\mu}$. 
Given a tolerance $\nu_i \in (0, 1)$ one computes $n_i$ as the smallest integer such that
\begin{align}\label{eq:energy_criterion}
     \sum_{j=1}^{n_i} \sigma_j^2  \ge ( 1-\nu_i ) \sum_{j=1}^{m_i} \sigma_j^2,
\end{align}
and selects 
\[
    \bPhi_i = \bU_i(:, 1:n_i).
\]
The POD basis $\bPhi_i$ minimize the snapshot reconstruction error $\norm{\bX_i - \bPhi_i (\bPhi_i)^T\bX_i}_F^2$
among all possible orthogonal basis matrices of sizes $N_i \times n_i$. See, e.g., \cite{MHinze_SVolkwein_2005a}. 

POD basis construction for the SRPC formulation from Section~\ref{sec:rom_port_compatibility} is similar.
In fact,  the bases $\bPhi_i^\Omega$, $i = 1, \ldots, n_\Omega$, are computed as before, and
the bases  $\bPhi_i^\Gamma$, $i = 1, \ldots, n_\Omega$, are computed from bases $\bPhi_j^p$ on
the ports.
Because $\bx_j^p(\bmu_\ell^{\rm train}) = \bP_i^j\bx_i^\Gamma(\bmu_\ell^{\rm train})$ for any $i \in P(j)$
and all ports $P(j)$, the snapshot matrices restricted to port $P(j)$ are
\begin{equation}\label{eq:port_snapshot_matrix}
    \bX_j^p = \bP_i^j \bX_i^\Gamma \quad \mbox{ for any } i\in P(j).
\end{equation}
For each port $P(j)$, the POD basis $\bPhi_j^p = \bU_j^p(:, 1:n_j^p)$ is computed from
the `thin' SVD   $\bX_j^p = \bU_j^p \bSigma_j^p (\bV_j^p)^T$ as described before.
With the port basis matrices $\bPhi_j^p$ the linear map $\bg_i^\Gamma$ is constructed following \eref{eq:gamma_map_port},
\begin{align*}   
    \bg_i^\Gamma(\hbx_i^\Gamma) := \sum_{j\in Q(i)} (\bP_i^j)^T \bPhi_j^p \hbP_i^j \hbx_i^\Gamma 
    = \bPhi_i^\Gamma \hbx_i^\Gamma,
    \quad \mbox{ where } \quad
        \bPhi_i^\Gamma = \sum_{j\in Q(i)}(\bP_i^j)^T \bPhi_j^p \hbP_i^j.
\end{align*}

\subsection{Nonlinear-manifold ROM} \label{sec:nmrom}
The ROM approach that we focus on in this work is the nonlinear manifold approach, also referred to as NM-ROM. 
Rather than supposing that the state solutions of the FOM are contained in a low-dimensional linear subspace as in LS-ROM, 
one supposes that the FOM state solutions are contained in a low-dimensional nonlinear manifold. 
To build the NM-ROM, one must compute a suitable mapping from a low-dimensional coordinate space, often referred to as the {\it latent space}, to the manifold of candidate state solutions, or {\it trial manifold}. 
Solving the ROM then yields the generalized coordinates in the latent space for a solution in the trial manifold. 
The approach considered here for computing the nonlinear trial manifold 
is similar to the approach in \cite[Sec. 3]{YKim_YChoi_DWidemann_TZohdi_2022a}, which uses wide, shallow, and sparse autoencoders to compute suitable mappings from the latent space to the trial manifold. 
Further information regarding the autoencoder architecture we use is given in Section \ref{sec:autoencoder_architecture}.

To compute a DD ROM using the NM-ROM approach, one must compute continuously differentiable nonlinear mappings from a suitably chosen latent space to the trial manifold. 
Hence the nonlinear functions 
$\bg_i^\Omega: \real^{n_i^\Omega}\to\real^{N_i^\Omega}$ and
$\bg_i^\Gamma:\real^{n_i^\Gamma}\to \real^{N_i^\Gamma}$
defined in \eref{eq:rom_approximations} are computed as the {\it decoders} of autoencoders
$\ba_i^\Omega :\real^{N_i^\Omega} \to \real^{N_i^\Omega}$ and
$\ba_i^\Gamma :\real^{N_i^\Gamma} \to \real^{N_i^\Gamma}$.
The autoencoders $\ba_i^\Omega$ and $\ba_i^\Gamma$ consist of two parts each: encoders
$\bh_i^{\Omega}: \real^{N_i^\Omega}\to\real^{n_i^\Omega}$ and
$\bh_i^{\Gamma}: \real^{N_i^\Gamma}\to\real^{n_i^\Gamma},$
and decoders
$\bg_i^\Omega:\real^{n_i^\Omega}\to \real^{N_i^\Omega}$ and 
$\bg_i^\Gamma:\real^{n_i^\Gamma}\to \real^{N_i^\Gamma}.$
The encoders map inputs from the high-dimensional state space to a low-dimensional latent space,
while the decoders map elements from the low-dimensional space to the high-dimensional state space.
The autoencoders $\ba_i^\Omega$ and $\ba_i^\Gamma$ are then defined via the function compositions
$$\ba_i^\Omega = \bg_i^\Omega \circ \bh_i^\Omega, \qquad
\ba_i^\Gamma = \bg_i^\Gamma \circ \bh_i^\Gamma. $$
In this work, the encoders and decoders are neural networks such that
the autoencoder approximates its inputs:
\begin{align*}
  \bx_i^\Omega \approx \ba_i^\Omega(\bx_i^\Omega) = \bg_i^\Omega(\bh_i^\Omega(\bx_i^\Omega)), \qquad
  \bx_i^\Gamma \approx \ba_i^\Gamma(\bx_i^\Gamma)= \bg_i^\Gamma(\bh_i^\Gamma(\bx_i^\Gamma)), \qquad
  i=1, \dots, n_\Omega.
\end{align*}
Further details on the neural network architecture used can be found in Section \ref{sec:autoencoder_architecture}.
The decoders $\bg_i^\Omega$ and $\bg_i^\Gamma$ can be interpreted as approximate inverses of the encoders $\bh_i^\Omega$ and $\bh_i^\Gamma$. 
In the SRPC case, the autoencoder $\ba_i^\Gamma$ is composed of autoencoders $\ba_j^p:\real^{N_j^p}\to \real^{N_j^p}$ with encoder $\bh_j^p:\real^{N_j^p} \to \real^{n_j^p}$ and decoder $\bg_j^p:\real^{n_j^p} \to \real^{N_j^p}$ for each port $P(j)$.

The mean-square-error (MSE) losses for the interior, interface, and port states are defined as
\begin{subequations}\label{eq:autoencoder_AbsMSE_loss}
  \begin{align}
    \cL_i^\Omega &:= \frac{1}{n_\mu}\sum_{\ell=1}^{n_\mu} \norm{\bx_i^{\Omega}(\bmu_\ell^{\rm train})-\bg_i^\Omega(\bh_i^\Omega(\bx_i^{\Omega}(\bmu_\ell^{\rm train})))}_2^2, & i&=1, \dots, n_\Omega, \\
    \cL_i^\Gamma &:= \frac{1}{n_\mu}\sum_{\ell=1}^{n_\mu} \norm{\bx_i^{\Gamma}(\bmu_\ell^{\rm train})-\bg_i^\Gamma(\bh_i^\Gamma(\bx_i^{\Gamma}(\bmu_\ell^{\rm train})))}_2^2, & i&=1, \dots, n_\Omega,\\
    \cL_j^p &:= \frac{1}{n_\mu}\sum_{\ell=1}^{n_\mu} \norm{\bx_j^p(\bmu_\ell^{\rm train})-\bg_j^p(\bh_j^p(\bx_j^p(\bmu_\ell^{\rm train})))}_2^2, & j&=1, \dots, n_p,
  \end{align}
\end{subequations}
where $\bx_i^{\Omega}(\bmu_\ell^{\rm train})$ and $\bx_i^{\Gamma}(\bmu_\ell^{\rm train})$ are snapshots of the interior and interface states on subdomain $i$ at parameter $\bmu_\ell^{\rm train}$ and $\bx_j^p(\bmu_\ell^{\rm train})$ is the state on port $P(j)$,
as discussed in Section \ref{sec:lsrom}.
In the WPFC case, the interior state and interface state autoencoders $\ba_i^\Omega$ and $\ba_i^\Gamma$ are trained by minimizing the interior and interface losses $\cL_i^\Omega$ and $\cL_i^\Gamma$, respectively. 
In the SPRC case, the interior state autoencoders $\ba_i^\Omega$ and the port autoencoders $\ba_j^p$ are trained by minimizing 
the interior and interface losses $\cL_i^\Omega$ and $\cL_j^p$, respectively. 
The interface state autoencoders $\ba_i^\Gamma$ are implied by the port autoencoders. Specifically,  $\bh_i^\Gamma$ is
\begin{equation}\label{eq:gamma_port_encoder}
    \bh_i^\Gamma(\bx_i^\Gamma) = \sum_{j \in Q(i)} (\hbP_i^j)^T\bh_j^p(\bP_i^j\bx_i^\Gamma),
\end{equation}
and $\bg_i^\Gamma$ is defined using equation \eref{eq:gamma_map_port}.

Notice that minimizing the MSE loss is equivalent to minimizing the snapshot reconstruction error, 
which is exactly how POD bases are constructed, as discussed in Section \ref{sec:lsrom}.
Training the autoencoders can also be interpreted as ``learning" the forward and inverse mappings from the latent space of generalized coordinates to the nonlinear trial manifold. 
After training, the decoders $\bg_i^\Omega$ and $\bg_i^\Gamma$ are used for the DD NM-ROM \eref{eq:dd_lspg_NLP} or \eref{eq:dd_lspg_NLP_rom_port}.

\section{Sequential quadratic programming solver} \label{sec:sqp_solver}

\subsection{Lagrange-Gauss-Newton SQP method } \label{sec:LGN_sqp_solver}
The problems (\ref{eq:dd_lspg_NLP}) and \eref{eq:dd_lspg_NLP_rom_port} are nonlinear programs (NLPs) with equality constraints,
and can be solved using  sequential quadratic programming (SQP) \cite{PTBoggs_JWTolle_1995a}, \cite[Ch.~18]{JNocedal_SJWright_2006a}.
The SQP solver detailed below amounts to applying a Newton-type method to the Karush-Kuhn-Tucker (KKT) necessary optimality conditions.
Note that the SQP solver described in this section can also be applied to the FOM \eref{eq:fom_dd_nlp} by considering the case $\bB_i=\bI$ and $\bg_i^\Omega, \bg_i^\Gamma$ equal to the identity mapping. 

To develop a solver that is applicable to either the WFPC \eref{eq:dd_lspg_NLP} or the SPRC formulations \eref{eq:dd_lspg_NLP_rom_port}, we define the constraint functions $\tbA_i: \real^{n_i^\Gamma} \to \real^{n_A}$ 
and consider the general formulation
\begin{subequations}\label{eq:dd_lspg_NLP_general}
  \begin{align}
    \min_{(\hbx_i^\Omega, \hbx_i^\Gamma), i=1, \dots, n_\Omega} \quad &
    \frac{1}{2}\sum_{i=1}^{n_\Omega} \norm{\bB_i \br_i\left(\bg_i^\Omega\left(\hbx_i^\Omega\right), \bg_i^\Gamma\left(\hbx_i^\Gamma \right)\right)}_2^2 \\
    {\rm s.t.} \quad & \sum_{i=1}^{n_\Omega} \tbA_i \big( \hbx_i^\Gamma\big) = \bzero.
  \end{align}
\end{subequations}
In the WFPC case $\tbA_i(\hbx_i^\Omega) = \bC \bA_i \bg_i^\Gamma(\hbx_i^\Gamma)$ and the constraints are nonlinear in the case of NM-ROMs.
In the SRPC case $\tbA_i(\hbx_i^\Gamma) = \hbA_i \hbx_i^\Gamma$ and the constraints are always linear.

To apply the SQP solver, one first writes the Lagrangian 
\begin{equation}\label{eq:dd_lspg_lagrangian}
  \widehat{L}(\hbx_1^\Omega, \hbx_1^\Gamma, \dots, \hbx_{n_\Omega}^{\Omega}, \hbx_{n_\Omega}^\Gamma, \hblambda) =
  \frac{1}{2}\sum_{i=1}^{n_\Omega} \norm{\bB_i \br_i\left(\bg_i^\Omega\left(\hbx_i^\Omega\right), \bg_i^\Gamma\left(\hbx_i^\Gamma \right)\right)}_2^2 +
  \sum_{i=1}^{n_\Omega} \hblambda^T \tbA_i(\hbx_i^\Gamma)
\end{equation}
for the ROM NLP \eref{eq:dd_lspg_NLP_general},
where $\hblambda \in \real^{n_A}$ are the Lagrange multipliers associated with the DD-ROM constraints (\ref{eq:dd_lspg_NLP_general}b).

Let $\nabla_{\bv}$ and $\frac{\partial}{\partial \bv}$ denote the partial gradient and partial Jacobian with respect to $\bv$, respectively,
and let $\frac{d}{d \bv}$ denote the Jacobian.
The first order necessary optimality conditions are
\begin{subequations}\label{eq:dd_nm_lspg_1st_order_opt}
  \begin{align}
    \nabla_{\hbx_i^\Omega}\widehat{L}(\hbx_1^\Omega, \hbx_1^\Gamma, \dots, \hbx_{n_\Omega}^{\Omega}, \hbx_{n_\Omega}^\Gamma, \hblambda) &=
    \brho_i^\Omega(\hbx_i^\Omega, \hbx_i^\Gamma) = \bzero, &
    i &= 1, \dots, n_\Omega, \\
    \nabla_{\hbx_i^\Gamma}\widehat{L}(\hbx_1^\Omega, \hbx_1^\Gamma, \dots, \hbx_{n_\Omega}^{\Omega}, \hbx_{n_\Omega}^\Gamma, \hblambda) &=
    \brho_i^\Gamma(\hbx_i^\Omega, \hbx_i^\Gamma, \hblambda)
    = \bzero,
    & i &= 1, \dots, n_\Omega, \\
    \nabla_{\hblambda}\widehat{L}(\hbx_1^\Omega, \hbx_1^\Gamma, \dots, \hbx_{n_\Omega}^{\Omega}, \hbx_{n_\Omega}^\Gamma, \hblambda) &=
      \sum_{i=1}^{n_\Omega} \tbA_i(\hbx_i^\Gamma) = \bzero,
  \end{align}
\end{subequations}
where
\begin{subequations}\label{eq:optimality_residuals}
  \begin{align}
    \brho_i^\Omega(\hbx_i^\Omega, \hbx_i^\Gamma) &=
    \frac{d \bg_i^\Omega}{d \hbx_i^\Omega}(\hbx_i^\Omega)^T
    \frac{\partial \br_i}{\partial \bx_i^\Omega}\big(\bg_i^\Omega(\hbx_i^\Omega),\bg_i^\Gamma(\hbx_i^\Gamma)\big)^T \bB_i^T \bB_i
    \br_i\big(\bg_i^\Omega(\hbx_i^\Omega),\bg_i^\Gamma(\hbx_i^\Gamma)\big), \\
    \brho_i^\Gamma(\hbx_i^\Omega, \hbx_i^\Gamma, \hblambda) &=
    \frac{d \bg_i^\Gamma}{d \hbx_i^\Gamma}(\hbx_i^{\Gamma})^T
    \frac{\partial \br_i}{\partial \bx_i^\Gamma}\big(\bg_i^\Omega(\hbx_i^\Omega),\bg_i^\Gamma(\hbx_i^\Gamma)\big)^T \bB_i^T \bB_i
    \br_i\big(\bg_i^\Omega(\hbx_i^\Omega),\bg_i^\Gamma(\hbx_i^\Gamma)\big) 
    + \frac{d \tbA_i}{d \hbx_i^\Gamma}(\hbx_i^\Gamma)^T \hblambda
  \end{align}
\end{subequations}
are the gradients of the Lagrangian with respect to the subdomain variables 
$\hbx_i^\Omega$ and $\hbx_i^\Gamma$, respectively.

A Newton-type method applied to \eref{eq:dd_nm_lspg_1st_order_opt} yields the SQP iterations
\begin{align}\label{eq:sqp_system}
  \begin{bmatrix}
    \bH_1(\hbx_1^{\Omega(k)}, \hbx_1^{\Gamma(k)}) &  &\dots & \bE_1(\hbx_1^\Gamma)^T \\
    & \ddots &  & \vdots\\
     & & \bH_{n_\Omega}(\hbx_{n_\Omega}^{\Omega(k)}, \hbx_{n_\Omega}^{\Gamma(k)}) & \bE_{n_\Omega}(\hbx_1^\Gamma)^T \\
     \bE_1(\hbx_1^\Gamma) & \dots & \bE_{n_\Omega}(\hbx_{n_\Omega}^\Gamma) &\bzero
  \end{bmatrix}
  \begin{bmatrix}
    \bs_1^{(k)} \\ \vdots \\ \bs_{n_\Omega}^{(k)} \\ \bs^{\hblambda(k)}
  \end{bmatrix} =-
  \begin{bmatrix}
    \brho_1(\hbx_1^{\Omega(k)}, \hbx_1^{\Gamma(k)}, \hblambda^{(k)}) \\
    \vdots \\
    \brho_{n_\Omega}(\hbx_{n_\Omega}^{\Omega(k)}, \hbx_{n_\Omega}^{\Gamma(k)}, \hblambda^{(k)}) \\
    \sum_{i=1}^{n_\Omega} \tbA_i (\hbx_i^{\Gamma(k)})
  \end{bmatrix},
\end{align}
where $k$ is the SQP iteration index, 
$\bH_i(\hbx_i^{\Omega(k)}, \hbx_i^{\Gamma(k)})$ is the Hessian of the Lagrangian with respect to the subdomain variables 
$(\hbx_i^\Omega, \hbx_i^\Gamma)$ evaluated at $(\hbx_i^{\Omega(k)}, \hbx_i^{\Gamma(k)})$ or an approximation of this Hessian, and where
\begin{align}\label{eq:sqp_block_matrices}
  \bE_i(\hbx_i^\Gamma) &=
  \begin{bmatrix}\bzero & \frac{d \tbA_i}{d \hbx_i^\Gamma}(\hbx_i^\Gamma)
  \end{bmatrix}, &
  \bs_i^{(k)} &=
  \begin{bmatrix}
    \bs_i^{\Omega(k)} \\ \bs_i^{\Gamma(k)}
  \end{bmatrix}, &
  \brho_i(\hbx_i^{\Omega}, \hbx_i^{\Gamma}, \hblambda) &=
  \begin{bmatrix}
    \brho_i^\Omega(\hbx_i^{\Omega}, \hbx_i^{\Gamma}) \\
    \brho_i^\Gamma(\hbx_i^{\Omega}, \hbx_i^{\Gamma}, \hblambda)
  \end{bmatrix},
\end{align}
for $i=1, \dots, n_\Omega$. 
The next result on the unique solvability of \eref{eq:sqp_system} follows from adapting standard results to the block structure of \eref{eq:sqp_system}.

\begin{lemma}\label{lemma:sqp_system_solution}
       If for $i = 1, \ldots, n_\Omega$ the matrices $\bH_{i}(\hbx_{i}^{\Omega(k)}, \hbx_{i}^{\Gamma(k)})$
       are positive definite on the null-space of $\bE_i(\hbx_i^\Gamma)$, and if
       $\big( \bE_1(\hbx_1^\Gamma), \ldots, \bE_{n_\Omega}(\hbx_{n_\Omega}^\Gamma) \big)$ has full row  rank, 
       then \eref{eq:sqp_system} has a unique solution.
 \end{lemma}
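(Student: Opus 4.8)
The plan is to read \eref{eq:sqp_system} as a KKT (saddle-point) linear system and prove that its coefficient matrix is nonsingular; since the system is square, nonsingularity is equivalent to unique solvability. Collecting the block-diagonal Hessian $\bH = \mathrm{blockdiag}(\bH_1, \ldots, \bH_{n_\Omega})$ and the assembled constraint block $\bE = (\bE_1, \ldots, \bE_{n_\Omega})$, the coefficient matrix has the form $\left[\begin{smallmatrix}\bH & \bE^T \\ \bE & \bzero\end{smallmatrix}\right]$, so I would invoke the standard characterization (e.g. \cite[Lemma~16.1]{JNocedal_SJWright_2006a}): such a matrix is nonsingular provided $\bE$ has full row rank and $\bH$ is positive definite on the null space of $\bE$. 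The full-row-rank condition is assumed directly, so the real work is to convert the per-subdomain definiteness hypotheses into definiteness of the assembled $\bH$ on $\ker \bE$.

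I would verify nonsingularity by an energy argument on the homogeneous system. Suppose $(\bs_1, \ldots, \bs_{n_\Omega}, \bs^{\hblambda})$ satisfies the homogeneous version of \eref{eq:sqp_system}. The $i$th block row reads $\bH_i \bs_i + \bE_i^T \bs^{\hblambda} = \bzero$ and the last row reads $\sum_{i} \bE_i \bs_i = \bzero$. Left-multiplying the $i$th block row by $\bs_i^T$ and summing over $i$ gives
\[
   \sum_{i=1}^{n_\Omega} \bs_i^T \bH_i \bs_i + (\bs^{\hblambda})^T \sum_{i=1}^{n_\Omega} \bE_i \bs_i = 0,
\]
and the last row annihilates the second term, leaving $\sum_{i} \bs_i^T \bH_i \bs_i = 0$. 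The constraint row says precisely that $\bs := (\bs_1, \ldots, \bs_{n_\Omega}) \in \ker \bE$, and once we know $\bH$ is positive definite there we conclude $\bs = \bzero$. Substituting back into the block rows yields $\bE_i^T \bs^{\hblambda} = \bzero$ for every $i$, i.e. $\bE^T \bs^{\hblambda} = \bzero$, and the full row rank of $\bE$ then forces $\bs^{\hblambda} = \bzero$. Hence the homogeneous system has only the trivial solution and \eref{eq:sqp_system} is uniquely solvable.

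The hard part is exactly the step ``$\bH$ is positive definite on $\ker \bE$.'' The subtlety is that a vector $\bs \in \ker \bE$ satisfies only the coupled condition $\sum_i \bE_i \bs_i = \bzero$, not $\bE_i \bs_i = \bzero$ for each $i$, so the quadratic form $\bs^T \bH \bs = \sum_i \bs_i^T \bH_i \bs_i$ cannot simply be decoupled into a sum of individually positive terms using only positive definiteness of each $\bH_i$ on the individual null space of $\bE_i$. I would close this gap by strengthening the per-subdomain information: using the structure $\bE_i = [\,\bzero \;\; \tfrac{d\tbA_i}{d\hbx_i^\Gamma}\,]$ together with the Gauss--Newton blocks $\bH_i$ from Section~\ref{sec:LGN_sqp_solver}, whenever the subdomain Jacobians have full column rank each $\bH_i$ is positive definite on all of $\real^{n_i^\Omega + n_i^\Gamma}$, in which case $\sum_i \bs_i^T \bH_i \bs_i = 0$ immediately forces every $\bs_i = \bzero$ and the null-space issue disappears entirely. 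The cleanest route is therefore to record definiteness of the assembled $\bH$ on $\ker \bE$ as the operative hypothesis and to note that the stated per-subdomain conditions guarantee it under this full-rank (global definiteness) assumption; with that settled, the saddle-point argument above completes the proof with only routine linear algebra.
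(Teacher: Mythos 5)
Your energy argument on the homogeneous system is exactly the standard proof of the saddle-point nonsingularity result that the paper invokes by citation (it gives no independent proof of this lemma), so the overall route is the same. The valuable part of your proposal is the subtlety you isolate, and you are right that it is not cosmetic: the null space of the assembled constraint block $\bE=(\bE_1,\dots,\bE_{n_\Omega})$ is cut out by the single coupled condition $\sum_i\bE_i\bs_i=\bzero$ and strictly contains the product of the individual null spaces, so positive definiteness of each $\bH_i$ on $\ker\bE_i$ does not deliver positive definiteness of ${\rm blockdiag}(\bH_1,\dots,\bH_{n_\Omega})$ on $\ker\bE$, which is what the cited theorems actually require. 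Indeed, the hypotheses as printed are insufficient: take $n_\Omega=2$, $n_i^\Omega=n_i^\Gamma=n_A=1$, $\bH_1=\bH_2={\rm diag}(1,0)$ (a legitimate Gauss--Newton block), $\bE_1=[0\;\;1]$, $\bE_2=[0\;\;{-1}]$. Each $\bH_i$ is positive definite on $\ker\bE_i=\myspan\{(1,0)^T\}$ and $(\bE_1,\bE_2)$ has full row rank, yet the second and fourth rows of the KKT matrix are $(0,0,0,0,1)$ and $(0,0,0,0,-1)$, so the matrix is singular.

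Your repair is the correct one: the operative hypothesis should be that the assembled block-diagonal Hessian is positive definite on $\ker\bE$; alternatively one may impose the stronger but more checkable condition that each $\bB_i\bR_i$ in \eref{eq:gauss_newton_hessian} has full column rank, which makes each $\bH_i$ positive definite on all of $\real^{n_i^\Omega+n_i^\Gamma}$ and dissolves the coupling issue, exactly as you observe. With that hypothesis in place the remainder of your argument --- $\sum_i\bs_i^T\bH_i\bs_i=0$ on $\ker\bE$ forces $\bs=\bzero$, after which $\bE^T\bs^{\hblambda}=\bzero$ and full row rank of $\bE$ force $\bs^{\hblambda}=\bzero$ --- is complete and routine.
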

For a proof see, e.g., \cite[Thm.~3.2]{MBenzi_GHGolub_JLiesen_2005a}, \cite[Lemma~16.12]{JNocedal_SJWright_2006a}.

We use a Gauss-Newton approximation of the Hessian, which is motivated by the following consideration.
If the residuals $\bB_i \br_i\big(\bg_i^\Omega(\hbx_i^\Omega),\bg_i^\Gamma(\hbx_i^\Gamma)\big)$ are small  at the solution 
of \eref{eq:dd_lspg_NLP_general}, then the first order optimality condition (\ref{eq:dd_nm_lspg_1st_order_opt}b) implies
that $\hblambda$ is small. Thus all second derivative terms in the true Hessians $\bH_i(\hbx_i^{\Omega(k)}, \hbx_i^{\Gamma(k)})$ are multiplied by small
residuals or small Lagrange multipliers. The Gauss-Newton Hessian approximation neglects these terms and approximates the Hessians by
\begin{subequations}\label{eq:gauss_newton_hessian}
  \begin{align}
    \bH_i(\hbx_i^{\Omega}, \hbx_i^{\Gamma})&= \bR_i(\hbx_i^\Omega, \hbx_i^\Gamma)^T \bB_i^T \bB_i \bR_i(\hbx_i^\Omega, \hbx_i^\Gamma),
   \end{align}
 where
  \begin{align}
   \bR_i(\hbx_i^\Omega, \hbx_i^\Gamma) &= 
    \begin{bmatrix}
      \frac{\partial \br_i}{\partial \bx_i^\Omega}\big(\bg_i^\Omega(\hbx_i^{\Omega}),\bg_i^\Gamma(\hbx_i^{\Gamma})\big)
      \frac{d \bg_i^\Omega}{d \hbx_i^\Omega}(\hbx_i^{\Omega}), &
      \frac{\partial \br_i}{\partial \bx_i^\Gamma}\big(\bg_i^\Omega(\hbx_i^{\Omega}),\bg_i^\Gamma(\hbx_i^{\Gamma})\big)
      \frac{d \bg_i^\Gamma}{d \hbx_i^\Gamma}(\hbx_i^{\Gamma})
    \end{bmatrix}
  \end{align}
\end{subequations}
is the Jacobian of $\br_i$ with respect to $(\hbx_i^\Omega, \hbx_i^\Gamma)$.
The advantage is that \eref{eq:gauss_newton_hessian} only requires first order derivatives.
Note that the  FOM solution satisfies \eref{eq:fom_dd}, i.e., the residual in the least squares formulation \eref{eq:fom_dd_nlp}
is zero. Thus, if the ROM well approximates the FOM \eref{eq:fom_dd} or, equivalently, its least squares formulation \eref{eq:fom_dd_nlp},
then we expect the residuals  $\bB_i \br_i\big(\bg_i^\Omega(\hbx_i^\Omega),\bg_i^\Gamma(\hbx_i^\Gamma)\big)$ to be small  
at the solution of \eref{eq:dd_lspg_NLP_general} and
the Gauss-Newton Hessian \eref{eq:gauss_newton_hessian} to be  good approximation of the true Hessian
of the Lagrangian \eref{eq:dd_lspg_lagrangian}.

Note that with the notation \eref{eq:gauss_newton_hessian}, the gradients $\brho_i(\hbx_i^{\Omega}, \hbx_i^{\Gamma}, \hblambda)$
in \eref{eq:sqp_block_matrices} can be written as 
\begin{equation}\label{eq:optimality_residuals_GN}
  \brho_i(\hbx_i^{\Omega}, \hbx_i^{\Gamma}, \hblambda) 
  =  \bR_i(\hbx_i^{\Omega}, \hbx_i^{\Gamma})^T \bB_i^T \bB_i \br_i\big(\bg_i^\Omega\big(\hbx_i^{\Omega}\big), \bg_i^\Gamma\big(\hbx_i^{\Gamma} \big)\big)
       +   \bE_i(\hbx_i^\Gamma)^T \hblambda, \quad i=1, \dots, n_\Omega.
\end{equation}
With the Gauss-Newton approximations \eref{eq:gauss_newton_hessian}, the SQP  system \eref{eq:sqp_system} is essentially  the
optimality system for the quadratic program
\begin{subequations}\label{eq:dd_lspg_NLP_general_QP}
  \begin{align}
    \min_{\bs_i = (\bs_i^\Omega, \bs_i^\Gamma), i=1, \dots, n_\Omega} \quad &
    \frac{1}{2}\sum_{i=1}^{n_\Omega} \norm{ \bB_i \br_i\big(\bg_i^\Omega\big(\hbx_i^{\Omega(k)}\big), \bg_i^\Gamma\big(\hbx_i^{\Gamma(k)} \big)\big) 
                                                                        +  \bB_i \bR_i(\hbx_i^{\Omega(k)}, \hbx_i^{\Gamma(k)})  \bs_i  }_2^2 \\
    {\rm s.t.} \quad & \sum_{i=1}^{n_\Omega} \tbA_i \big( \hbx_i^{\Gamma(k)} \big) 
                                 +   \frac{d}{d\hbx_i^\Gamma} \tbA_i \big( \hbx_i^{\Gamma(k)} \big)  \bs_i = \bzero.
  \end{align}
\end{subequations}
More precisely, the following result holds.

\begin{lemma}\label{lemma:sqp_system_solution2}
       If the assumptions of Lemma~\ref{lemma:sqp_system_solution} hold, then the quadratic program \eref{eq:dd_lspg_NLP_general_QP}
       has a unique solution $\bs_i^{(k)} = (\bs_i^{\Omega(k)}, \bs_i^{\Gamma(k)})$, $i=1, \dots, n_\Omega$, given by
       the solution of  \eref{eq:sqp_system}. The associated Lagrange multiplier for \eref{eq:dd_lspg_NLP_general_QP}
       is $\hblambda^{(k)} + \bs^{\hblambda(k)}$, where $\hblambda^{(k)}$ is the  Lagrange multiplier estimate in   \eref{eq:sqp_system} 
       and $\bs^{\hblambda(k)}$ is the last component in the solution vector of  \eref{eq:sqp_system}.
\end{lemma}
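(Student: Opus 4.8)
The plan is to show that the first-order (KKT) system of the convex quadratic program \eref{eq:dd_lspg_NLP_general_QP} is, after a relabeling of the multiplier, identical to the linear system \eref{eq:sqp_system}, and then to combine Lemma~\ref{lemma:sqp_system_solution} with convexity to upgrade this to a unique global minimizer. First I would expand the objective of \eref{eq:dd_lspg_NLP_general_QP} as a quadratic in the step $\bs = (\bs_1, \dots, \bs_{n_\Omega})$. Writing $\br_i^{(k)} := \br_i(\bg_i^\Omega(\hbx_i^{\Omega(k)}), \bg_i^\Gamma(\hbx_i^{\Gamma(k)}))$ and $\bR_i^{(k)} := \bR_i(\hbx_i^{\Omega(k)}, \hbx_i^{\Gamma(k)})$, the objective equals, up to an additive constant independent of $\bs$, the expression $\frac{1}{2}\sum_{i=1}^{n_\Omega} \bs_i^T \bH_i \bs_i + \sum_{i=1}^{n_\Omega} \big( (\bR_i^{(k)})^T \bB_i^T \bB_i \br_i^{(k)} \big)^T \bs_i$, where $\bH_i = (\bR_i^{(k)})^T \bB_i^T \bB_i \bR_i^{(k)}$ is exactly the Gauss-Newton Hessian \eref{eq:gauss_newton_hessian}. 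Since each $\bH_i$ is positive semidefinite, the objective is convex. Using the block structure $\bE_i(\hbx_i^\Gamma) = \begin{bmatrix}\bzero & \frac{d\tbA_i}{d\hbx_i^\Gamma}(\hbx_i^\Gamma)\end{bmatrix}$ from \eref{eq:sqp_block_matrices}, the constraint (\ref{eq:dd_lspg_NLP_general_QP}b) is the affine condition $\sum_{i=1}^{n_\Omega} \bE_i(\hbx_i^{\Gamma(k)}) \bs_i = -\sum_{i=1}^{n_\Omega} \tbA_i(\hbx_i^{\Gamma(k)})$, with constraint Jacobian $(\bE_1, \dots, \bE_{n_\Omega})$.

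Next I would write down the KKT system for \eref{eq:dd_lspg_NLP_general_QP}. Introducing a multiplier $\oblambda \in \real^{n_A}$, stationarity reads $\bH_i \bs_i + (\bR_i^{(k)})^T \bB_i^T \bB_i \br_i^{(k)} + \bE_i^T \oblambda = \bzero$ for $i = 1, \dots, n_\Omega$, together with the affine feasibility condition above. The key algebraic step is to set $\oblambda = \hblambda^{(k)} + \bs^{\hblambda(k)}$ and substitute the Gauss-Newton gradient identity \eref{eq:optimality_residuals_GN}, namely $\brho_i = (\bR_i^{(k)})^T \bB_i^T \bB_i \br_i^{(k)} + \bE_i^T \hblambda^{(k)}$. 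The stationarity equation then collapses to $\bH_i \bs_i + \bE_i^T \bs^{\hblambda(k)} = -\brho_i$, which is precisely the $i$-th block row of \eref{eq:sqp_system}, while the feasibility condition is its last block row. Hence the KKT system of \eref{eq:dd_lspg_NLP_general_QP} coincides with \eref{eq:sqp_system} under the identification $\oblambda = \hblambda^{(k)} + \bs^{\hblambda(k)}$.

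Finally I would close the argument. By Lemma~\ref{lemma:sqp_system_solution}, the stated assumptions guarantee that \eref{eq:sqp_system} has a unique solution $(\bs_1^{(k)}, \dots, \bs_{n_\Omega}^{(k)}, \bs^{\hblambda(k)})$; equivalently, the KKT system of the QP has a unique primal-dual solution, with multiplier $\hblambda^{(k)} + \bs^{\hblambda(k)}$. Because \eref{eq:dd_lspg_NLP_general_QP} is convex with affine constraints, the linearity constraint qualification holds, so every minimizer is a KKT point; conversely, by convexity every KKT point is a global minimizer. Uniqueness of the KKT point therefore forces the minimizer to be unique and equal to $\bs^{(k)}$, with the stated multiplier.

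I expect the main obstacle to be the uniqueness claim rather than existence. Existence of a minimizer follows immediately once a KKT point is produced and convexity is invoked, but uniqueness requires care: one must ensure that Lemma~\ref{lemma:sqp_system_solution} delivers uniqueness of the \emph{full} primal-dual solution. This is equivalent to the reduced Hessian of the block-diagonal matrix $\mathrm{diag}(\bH_1, \dots, \bH_{n_\Omega})$ being positive definite on the null-space of $(\bE_1, \dots, \bE_{n_\Omega})$, which (together with the positive semidefiniteness of each $\bH_i$) is what promotes ``a global minimizer exists'' to ``the minimizer is unique.'' The remaining manipulations — the objective expansion and the multiplier-shift that aligns the two systems — are routine linear-algebra bookkeeping.
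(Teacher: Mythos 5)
Your proposal is correct and follows essentially the same route as the paper: the paper's proof likewise identifies the necessary and sufficient optimality conditions of the quadratic program \eref{eq:dd_lspg_NLP_general_QP} with the system \eref{eq:sqp_system} after moving the term $\bE_i(\hbx_i^{\Gamma(k)})^T\hblambda^{(k)}$ across the equation, i.e., after the multiplier shift $\hblambda^{(k)}+\bs^{\hblambda(k)}$, and then invokes the standard equality-constrained QP theory together with Lemma~\ref{lemma:sqp_system_solution}. You simply spell out the details (quadratic expansion, use of \eref{eq:optimality_residuals_GN}, convexity argument) that the paper leaves as a citation.
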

The proof of Lemma~\ref{lemma:sqp_system_solution2}
follows from the necessary and sufficient optimality conditions (e.g.,  \cite[Sec~16.1]{JNocedal_SJWright_2006a})
for the quadratic program \eref{eq:dd_lspg_NLP_general_QP}. The necessary and sufficient optimality conditions 
for \eref{eq:dd_lspg_NLP_general_QP} are given by  \eref{eq:sqp_system} with the terms 
$\bE_i(\hbx_i^{\Gamma(k)})^T \hblambda^{(k)}$ (see \eref{eq:optimality_residuals_GN}) moved from the right to the left hand side.

An advantage of the Gauss-Newton approximation is that no Lagrange multiplier estimate is needed in \eref{eq:dd_lspg_NLP_general_QP}
or the associated optimality system.
\sloppy 
Of course, quantities like $\bg_i^\Omega(\hbx_i^\Omega)$,
    $\bg_i^\Gamma(\hbx_i^\Gamma)$, 
    $\bB_i\br_i(\bg_i^\Omega(\hbx_i^\Omega), \bg_i^\Gamma(\hbx_i^\Gamma))$, 
    $\bB_i \bR_i(\hbx_i^\Omega, \hbx_i^\Gamma)$, 
    $\tbA_i(\hbx_i^\Gamma)$,  and
    $\frac{d \tbA_i}{d \hbx_i^\Gamma}(\hbx_i^\Gamma)$ can be computed in parallel across the subdomains.
Moreover, the block structure of the system \eref{eq:sqp_system} lends itself to a parallel solution strategy. 
However, since \eref{eq:sqp_system} corresponds to the ROM its size tends to be small and 
parallelism in its solution may yield less speedup than it would if applied to the DD formulation \eref{eq:fom_dd}
of the FOM \eref{eq:fom_residual}.
The parallel implementation of the approach discussed in this paper is left to future work.

Given the solution of the SQP  system \eref{eq:sqp_system}, the new iterate, i.e., the new approximate solution of  \eref{eq:dd_lspg_NLP_general}
is computed as
\begin{subequations}\label{eq:sqp_update}
  \begin{align}
    \hbx_i^{\Omega(k+1)} &= \hbx_i^{\Omega(k)} + \alpha^{(k)}\bs_i^{\Omega(k)}, & i=1, \dots, n_\Omega, \\
    \hbx_i^{\Gamma(k+1)} &= \hbx_i^{\Gamma(k)} + \alpha^{(k)}\bs_i^{\Gamma(k)}, & i=1, \dots, n_\Omega,
  \end{align}
\end{subequations}
with step size $\alpha^{(k)} \in (0, 1]$.  
If one chooses to keep a Lagrange multiplier estimate, then $\hblambda^{(k+1)} = \hblambda^{(k)} + \alpha^{(k)}\bs^{\hblambda(k)}$,   $i=1, \dots, n_\Omega$,
where  $\bs^{\hblambda(k)}$ is the last component in the solution vector of  \eref{eq:sqp_system}.
The step size $\alpha^{(k)}$ is computed via line-search using a merit function that coordinates progress of
the iterates (and Lagrange multipliers) towards feasibility and optimality. 
In this work, we simply use the norm of the gradients \eref{eq:dd_nm_lspg_1st_order_opt}. This is an appropriate criterion
if one starts sufficiently close to a (local) minimizer of \eref{eq:dd_lspg_NLP_general}, and this criterion yielded good results 
in our examples. In our examples, the step size is computed using a backtracking line search with the Armijo rule.

\subsection{Convergence of SQP Solver}\label{sec:sqp_convergence}
Convergence of the Lagrange-Gauss-Newton SQP method can be established using one of two related approaches.
The iteration \eref{eq:sqp_update} with $\bs_i^{\Omega(k)},  \bs_i^{\Gamma(k)}$, $i=1, \dots, n_\Omega$,
computed as the solution of \eref{eq:sqp_system} with Gauss-Newton Hessian approximation \eref{eq:gauss_newton_hessian}
can be interpreted and analyzed as a generalized Gauss-Newton iteration. See \cite{HGBock_EKostina_JPSchloeder_2007a}.
Alternatively, this iteration can also be viewed as an inexact Newton method applied to the first-order optimality conditions
\eref{eq:dd_nm_lspg_1st_order_opt}.
The local convergence result using either approach requires that the  Lagrange-Gauss-Newton SQP method is started
 sufficiently close to a (local) minimizer of \eref{eq:dd_lspg_NLP_general}.
 We summarize the convergence theory for inexact Newton methods (see, e.g. \cite[Thm. 11.3]{JNocedal_SJWright_2006a})
 in Theorem \ref{thm:sqp_convergence}. 
First we define the following notation to improve readability.
We group the vectors $(\hbx_1^\Omega, \hbx_1^\Gamma, \dots, \hbx_{n_\Omega}^\Omega, \hbx_{n_\Omega}^\Gamma)$ and 
$(\bs_1^{\Omega(k)}, \bs_1^{\Gamma(k)}, \dots,
    \bs_{n_\Omega}^{\Omega(k)}, \bs_{n_\Omega}^{\Gamma(k)})$ as 
\begin{subequations}\label{eq:sqp_notation}
    \begin{align}
        \hbx &= 
        \begin{bmatrix}
            \hbx_1^{\Omega}\\ \hbx_1^{\Gamma} \\ \vdots \\
            \hbx_{n_\Omega}^{\Omega}\\ \hbx_{n_\Omega}^{\Gamma}
        \end{bmatrix} \in \real^{n_D},
        &
        \bs_x^{(k)} &= 
        \begin{bmatrix}
            \bs_1^{\Omega(k)}\\ \bs_1^{\Gamma(k)} \\ \vdots \\
            \bs_{n_\Omega}^{\Omega(k)}\\ \bs_{n_\Omega}^{\Gamma(k)}
        \end{bmatrix}\in \real^{n_D}.
    \end{align}
where $n_D =\sum_{i=1}^{n_\Omega} (n_i^\Omega + n_i^\Gamma)$.
Furthermore  let $\hbF:\real^{n_D+n_A}\to \real^{n_D+n_A}$,
\begin{align}
    \hbF(\hbx, \hblambda) &=
    -
  \begin{bmatrix}
    \brho_1(\hbx_1^{\Omega(k)}, \hbx_1^{\Gamma(k)}, \hblambda^{(k)}) \\
    \vdots \\
    \brho_{n_\Omega}(\hbx_{n_\Omega}^{\Omega(k)}, \hbx_{n_\Omega}^{\Gamma(k)}, \hblambda^{(k)}) \\
    \sum_{i=1}^{n_\Omega} \tbA_i(\hbx_i^{\Gamma(k)})
  \end{bmatrix}.
\end{align}
denote the right hand side of the KKT system.
Recall that if  $\obx \in \real^{n_D}$ is a local minimizer of \eref{eq:dd_lspg_NLP_general} with associated Lagrange multiplier 
$\oblambda \in \real^{n_A}$, then $\hbF(\obx, \oblambda) = \bzero$.
Next define the Hessian approximation 
$\bH : \real^{n_D}\times \real^{n_A} \to \real^{n_D \times n_D}$, 
\begin{align}
    \bH(\hbx, \hblambda) &=
    \begin{bmatrix}
    \bH_1(\hbx_1^\Omega, \hbx_1^\Gamma, \hblambda) \\
    & \ddots \\
    && \bH_{n_\Omega}(\hbx_{n_\Omega}^\Omega, \hbx_{n_\Omega}^\Gamma, \hblambda)
    \end{bmatrix},
\end{align}
and constraint Jacobian $ \frac{d}{d\hbx^\Gamma}  \tbA : \real^{\sum_{i=1}^{n_\Omega} n_i^\Gamma} \to \real^{n_A \times n_D}$, 
\begin{align}
  \frac{d}{d\hbx^\Gamma}  \tbA (\hbx_1^\Gamma, \dots, \hbx_{n_\Omega}^\Gamma) &=
    \begin{bmatrix}
    \bzero & \frac{d}{d\hbx_1^\Gamma}  \tbA_1(\hbx_1^\Gamma) & \dots & 
    \bzero &  \frac{d}{d\hbx_{n_\Omega}^\Gamma}  \tbA_{n_\Omega}(\hbx_{n_\Omega}^\Gamma)
    \end{bmatrix}.
\end{align}
\end{subequations}
The convergence result can now be stated as follows.
\begin{theorem}\label{thm:sqp_convergence}
    Let $\br_i$, $\bg_i^\Omega$, and $\bg_i^\Gamma$ be continuously differentiable for all $i=1, \dots, n_\Omega$. 
    Let $\obx$ be a local minimizer of \eref{eq:dd_lspg_NLP_general} such that the Jacobian
    $\frac{d}{d\hbx^\Gamma}\tbA(\obx_1^\Gamma, \dots, \obx_{n_\Omega}^\Gamma)$ has full row rank,
    let $\oblambda$ denote the associated Lagrange multiplier and assume that
    $\bH(\obx, \oblambda)$ is positive definite on the null-space of 
    $\frac{d}{d\hbx^\Gamma} \tbA(\obx_1^\Gamma, \dots, \obx_{n_\Omega}^\Gamma)$. 
    Furthermore, assume that $\hbF'(\hbx, \hblambda)$ is Lipschitz continuous with Lipschitz constant $K$, and that the 
    steps $\bs_x^{(k)}$ satisfy 
    \begin{align*}
        \norm{\left(\nabla_{\hbx}^2 \widehat{L}(\hbx^{(k)}, \hblambda^{(k)}) - \bH(\hbx^{(k)}, \hblambda^{(k)})\right)\bs_x^{(k)}}_2 
        \leq \eta_k\norm{\hbF(\hbx^{(k)}, \hblambda^{(k)})}_2
    \end{align*}
    for some sequence of forcing parameters $\eta_k$, 
    and where $\widehat{L}$ is the Lagrangian defined in (\ref{eq:dd_lspg_lagrangian}).
    
    If $\set{\eta_k}$ satisfies $0 < \eta_k \leq \eta$ where $\eta$ is such that
    $4\eta \bar{\kappa}  < 1$  with $\bar{\kappa} =  \big\| \hbF'(\obx, \oblambda)^{-1} \big\|_2 \big\| \hbF'(\obx,\oblambda) \big\|_2$,
    then for all 
    $\sigma \in (4\eta \bar{\kappa} , 1)$
    there exists an $\epsilon > 0$ such that for any $(\hbx^{(0)}, \hblambda^{(0)})$ 
    with 
    $\big\| (\obx, \oblambda)-(\hbx^{(0)}, \hblambda^{(0)}) \big\|_2 < \epsilon$, the sequence of iterates 
    $\big\{ (\hbx^{(k)}, \hblambda^{(k)}) \big\}$ generated by the SQP solver converges to $(\obx, \oblambda)$,
   and the iterates satisfy
    \begin{align}\label{eq:sqp_convergence}
        \big\| (\hbx^{(k)}, \hblambda^{(k)})-(\obx, \oblambda) \big\|_2
        &\leq K \big\| \hbF'(\obx, \oblambda)^{-1} \big\|_2 \big\| (\hbx^{(k)}, \hblambda^{(k)})-(\obx, \oblambda) \big\|_2^2 
               + 4\eta_k \bar{\kappa} \big\| (\hbx^{(k)}, \hblambda^{(k)})-(\obx, \oblambda) \big\|_2 \nonumber \\
        &\leq \sigma \big\| (\hbx^{(k)}, \hblambda^{(k)})-(\obx, \oblambda) \big\|_2. \nonumber
    \end{align}
\end{theorem}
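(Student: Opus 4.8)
The plan is to recognize the Lagrange--Gauss--Newton SQP iteration as an inexact Newton iteration for the KKT map $\hbF$ and then invoke the standard inexact Newton local-convergence theorem (as cited, \cite[Thm.~11.3]{JNocedal_SJWright_2006a}). Writing $\bz^{(k)} = (\hbx^{(k)}, \hblambda^{(k)})$, the point $(\obx, \oblambda)$ is a root of $\hbF$ because it is a KKT point of \eref{eq:dd_lspg_NLP_general}. The exact Newton equation for $\hbF(\bz) = \bzero$ is $\hbF'(\bz^{(k)})\bs = -\hbF(\bz^{(k)})$, and (up to an overall sign) $\hbF'(\bz^{(k)})$ is the exact KKT Jacobian, whose diagonal blocks are the \emph{true} Lagrangian Hessians $\nabla_{\hbx}^2\widehat{L}$. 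The SQP system \eref{eq:sqp_system} is identical except that these Hessian blocks are replaced by the Gauss--Newton approximations $\bH$; the constraint-Jacobian blocks $\bE_i$ and the trailing zero block are unchanged. This is because the Lagrangian decouples across subdomains (both the objective $\frac{1}{2}\sum_i\norm{\bB_i\br_i}_2^2$ and the coupling term $\hblambda^T\sum_i\tbA_i(\hbx_i^\Gamma)$ are separable), so $\nabla_{\hbx}^2\widehat{L}$ and $\bH$ are block diagonal and differ only block-by-block.

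The first key step is to verify the inexact Newton forcing condition. Because the SQP step $\bs^{(k)} = (\bs_x^{(k)}, \bs^{\hblambda(k)})$ solves the Gauss--Newton system \eref{eq:sqp_system} exactly while the true Newton system differs only in the Hessian blocks, subtracting the two shows that the Newton residual is confined to the primal block, $\bw^{(k)} := \hbF'(\bz^{(k)})\bs^{(k)} + \hbF(\bz^{(k)}) = \big( (\nabla_{\hbx}^2\widehat{L}(\bz^{(k)}) - \bH(\bz^{(k)}))\,\bs_x^{(k)},\; \bzero \big)$, so that $\norm{\bw^{(k)}}_2 = \norm{(\nabla_{\hbx}^2\widehat{L}(\bz^{(k)}) - \bH(\bz^{(k)}))\,\bs_x^{(k)}}_2$. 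The theorem's step hypothesis then reads verbatim as $\norm{\bw^{(k)}}_2 \le \eta_k\norm{\hbF(\bz^{(k)})}_2$, i.e.\ the inexact Newton forcing condition with forcing sequence $\{\eta_k\}$. I would also record that the steps are well defined near $(\obx,\oblambda)$: by Lemma~\ref{lemma:sqp_system_solution}, positive definiteness of $\bH(\obx,\oblambda)$ on the null space of the constraint Jacobian together with its full row rank makes the Gauss--Newton KKT matrix nonsingular at $(\obx,\oblambda)$, hence throughout a neighborhood by continuity. (The nonsingularity of the \emph{true} Jacobian $\hbF'(\obx,\oblambda)$, implicit in the finiteness of $\bar{\kappa}$, is the separate requirement inherited from the inexact Newton theorem.)

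With the forcing condition in hand, the second step is the standard inexact Newton contraction estimate, which uses Lipschitz continuity of $\hbF'$ with constant $K$ and the finiteness of $\bar{\kappa} = \norm{\hbF'(\obx,\oblambda)^{-1}}_2\norm{\hbF'(\obx,\oblambda)}_2$. Splitting the error as $\bz^{(k+1)} - (\obx,\oblambda) = \big[\bz^{(k)} - (\obx,\oblambda) - \hbF'(\bz^{(k)})^{-1}\hbF(\bz^{(k)})\big] + \hbF'(\bz^{(k)})^{-1}\bw^{(k)}$, the bracketed exact-Newton term equals $\hbF'(\bz^{(k)})^{-1}\big[\hbF'(\bz^{(k)})(\bz^{(k)} - (\obx,\oblambda)) - \hbF(\bz^{(k)})\big]$, which by the Taylor/Lipschitz estimate $\norm{\hbF(\bz^{(k)}) - \hbF(\obx,\oblambda) - \hbF'(\bz^{(k)})(\bz^{(k)} - (\obx,\oblambda))}_2 \le \tfrac{K}{2}\norm{\bz^{(k)} - (\obx,\oblambda)}_2^2$ combined with the neighborhood bound $\norm{\hbF'(\bz^{(k)})^{-1}}_2 \le 2\norm{\hbF'(\obx,\oblambda)^{-1}}_2$ is at most $K\norm{\hbF'(\obx,\oblambda)^{-1}}_2\norm{\bz^{(k)} - (\obx,\oblambda)}_2^2$. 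For the inexact term one uses $\norm{\bw^{(k)}}_2 \le \eta_k\norm{\hbF(\bz^{(k)})}_2$ together with $\norm{\hbF(\bz^{(k)})}_2 \le 2\norm{\hbF'(\obx,\oblambda)}_2\norm{\bz^{(k)} - (\obx,\oblambda)}_2$ on a small enough ball; multiplying by $\norm{\hbF'(\bz^{(k)})^{-1}}_2 \le 2\norm{\hbF'(\obx,\oblambda)^{-1}}_2$ produces the coefficient $4\eta_k\bar{\kappa}$. Adding the two contributions gives the first inequality in \eref{eq:sqp_convergence}.

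Finally, to obtain linear convergence I would choose $\epsilon > 0$ small enough that $K\norm{\hbF'(\obx,\oblambda)^{-1}}_2\,\epsilon \le \sigma - 4\eta\bar{\kappa}$, which is possible since $\sigma > 4\eta\bar{\kappa}$; then on the $\epsilon$-ball the recursion yields $\norm{\bz^{(k+1)} - (\obx,\oblambda)}_2 \le \sigma\norm{\bz^{(k)} - (\obx,\oblambda)}_2$ with $\sigma < 1$, so by induction the iterates remain in the ball and converge to $(\obx,\oblambda)$. The main obstacle is really the first step: pinning down that the Gauss--Newton modification perturbs \emph{only} the Hessian block, so that the Newton residual equals $(\nabla_{\hbx}^2\widehat{L} - \bH)\,\bs_x^{(k)}$ exactly and the hypothesis can be read off as the inexact Newton forcing condition. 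Everything afterward is the textbook inexact Newton estimate, with the only bookkeeping subtlety being the neighborhood-dependent factor of $4$ multiplying $\eta_k\bar{\kappa}$.
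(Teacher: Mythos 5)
Your proposal is correct and follows exactly the route the paper takes: the paper identifies the Lagrange--Gauss--Newton SQP iteration as an inexact Newton method on the KKT system and then simply cites the standard local convergence theorem (\cite[Thm.~11.3]{JNocedal_SJWright_2006a}), which is precisely the argument you spell out, including the identification of the Newton residual with $(\nabla_{\hbx}^2\widehat{L}-\bH)\bs_x^{(k)}$ and the constants $K\|\hbF'(\obx,\oblambda)^{-1}\|_2$ and $4\eta_k\bar{\kappa}$. The only implicit assumption you share with the paper is that full steps $\alpha^{(k)}=1$ are taken near the solution.
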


See, e.g., \cite[Thm. 11.3]{JNocedal_SJWright_2006a} for a proof of this theorem. 

\begin{remark}\label{rmk:kkt_is_necessary}
 As stated, Theorem~\ref{thm:sqp_convergence} only guarantees a solution to the KKT system \eref{eq:dd_nm_lspg_1st_order_opt}, 
 which are (first order) {\it necessary} optimality conditions. However one can give alternative inexactness conditions on 
 Gauss-Newton Hessian approximations that ensure local convergence to a point at which
 the second order sufficient optimality conditions are satisfied.
 See \cite[L.~2.5] {MHeinkenschloss_1993}  for the unconstrained case
 and  \cite[Sec.~3.5]{HGBock_EKostina_JPSchloeder_2007a}
 for the constrained case,  but with $\ell_1$ rather than $\ell_2$ (=least squares) objective.
\end{remark}
\section{Autoencoder architecture}\label{sec:autoencoder_architecture}
Following \cite{YKim_YChoi_DWidemann_TZohdi_2022a}, we consider the use of shallow, wide, sparse-masked autoencoders with smooth activation functions for representing the maps, $\bg_i^\Omega$ and $\bg_i^\Gamma$. 
Shallow networks are used for computational efficiency; fewer layers correspond to fewer repeated matrix-vector multiplications when evaluating the decoders. 
The shallow depth necessitates a wide network to maintain enough expressiveness for use in NM-ROM. 
Sparsity is applied at the decoder output layer so that hyper-reduction can be applied. 
Further details on hyper reduction are addressed in Section \ref{sec:hyper_reduction}. 
Smooth activations are used to ensure that $\bg_i^\Omega$ and $\bg_i^\Gamma$ are continuously differentiable. 
In contrast with \cite{YKim_YChoi_DWidemann_TZohdi_2022a}, we also apply a sparsity mask to the encoder input layer so that the encoders and decoders are symmetric across the latent layer.
We found that applying a sparsity mask to the encoder input layer permitted the use of a wider network for the encoder, resulting in improved performance over a dense input layer. 
See Section \ref{sec:dense_vs_sparse_encoder} for further details.

\subsection{Weak FOM-port formulation}
First we detail the architectures used for the weak FOM-port constraint formulation.
We use a single-layer architecture for the encoders and decoders with a smooth, non-polynomial activation function. 
The encoders, $\bh_i^\Omega$ and $\bh_i^\Gamma$, and decoders, $\bg_i^\Omega$ and $\bg_i^\Gamma$, are of the form
\begin{subequations}\label{eq:autoencoder_architecture}
    \begin{align}
        \bh_i^\Omega&:\real^{N_i^\Omega}\to\real^{n_i^\Omega}, &
        \bh_i^\Omega(\bx_i^\Omega) = \bW_2^{\bh_i^\Omega}\bsigma_i^\Omega(\bW_1^{\bh_i^\Omega} \bx_i^\Omega + \bb_1^{\bh_i^\Omega}), \\
        \bg_i^\Omega&:\real^{n_i^\Omega}\to\real^{N_i^\Omega}, &
        \bg_i^\Omega(\hbx_i^\Omega) = \bW_2^{\bg_i^\Omega}\bsigma_i^\Omega(\bW_1^{\bg_i^\Omega} \hbx_i^\Omega + \bb_1^{\bg_i^\Omega}), \\
        \bh_i^\Gamma&:\real^{N_i^\Gamma}\to\real^{n_i^\Gamma}, &
        \bh_i^\Gamma(\bx_i^\Gamma) = \bW_2^{\bh_i^\Gamma}\bsigma_i^\Gamma(\bW_1^{\bh_i^\Gamma} \bx_i^\Gamma + \bb_1^{\bh_i^\Gamma}), \\
        \bg_i^\Gamma&:\real^{n_i^\Gamma}\to\real^{N_i^\Gamma}, &
        \bg_i^\Gamma(\hbx_i^\Gamma) = \bW_2^{\bg_i^\Gamma}\bsigma_i^\Gamma(\bW_1^{\bg_i^\Gamma} \hbx_i^\Gamma + \bb_1^{\bg_i^\Gamma}), 
    \end{align}
\end{subequations}
where
\begin{subequations}
    \begin{align}
        \bW_1^{\bg_i^\Omega}, \left(\bW_2^{\bh_i^\Omega} \right)^T &\in \real^{w_i^\Omega \times n_i^\Omega},&
        \bW_2^{\bg_i^\Omega}, \left(\bW_1^{\bh_i^\Omega} \right)^T &\in \real^{N_i^\Omega \times w_i^\Omega}, \\
        \bb_1^{\bh_i^\Omega} & \in \real^{w_i^\Omega}, &
        \bb_1^{\bg_i^\Omega} & \in \real^{w_i^\Omega}, \\
        \bW_1^{\bg_i^\Gamma}, \left(\bW_2^{\bh_i^\Gamma} \right)^T &\in \real^{w_i^\Gamma\times n_i^\Gamma},&
        \bW_2^{\bg_i^\Gamma}, \left(\bW_1^{\bh_i^\Gamma} \right)^T &\in \real^{N_i^\Gamma \times w_i^\Gamma}, \\
        \bb_1^{\bh_i^\Gamma} & \in \real^{w_i^\Gamma}, &
        \bb_1^{\bg_i^\Gamma} & \in \real^{w_i^\Gamma}, 
    \end{align}
\end{subequations}
$\bsigma_i^\Omega, \bsigma_i^\Gamma$ are smooth, non-polynomial activation functions (e.g. Sigmoid or Swish), 
and where $w_i^\Omega, w_i^\Gamma$ are the network widths
for all subdomains $i=1,\dots, n_\Omega.$
The weight matrices $\bW_2^{\bg_i^\Omega}, \bW_{1}^{\bh_i^\Omega}, \bW_{2}^{\bg_i^\Gamma}$ and $\bW_1^{\bh_i^\Gamma}$ are all sparse, while the remaining weights and biases are dense. 

The widths, $w_i^\Omega$ and $w_i^\Gamma$, as well as the sparsity patterns of $\bW_2^{\bg_i^\Omega}, \bW_{1}^{\bh_i^\Omega}, \bW_{2}^{\bg_i^\Gamma}$ and $\bW_1^{\bh_i^\Gamma}$ are hyper-parameters that require tuning. 
The use of a single-layer architecture of arbitrary width and non-polynomial activation, as defined in (\ref{eq:autoencoder_architecture}a-d), is motivated by the well-known universal approximation theorem \cite{GCybenko_1989a}, \cite{APinkus_1999a}. 
Furthermore, the use of a smooth activation function ensures that $\bg_i^\Omega$ and $\bg_i^\Gamma$ are continuously differentiable. 
This is important because the Jacobians $\frac{d \bg_i^\Omega}{d \hbx_i^\Omega}$ and $\frac{d \bg_i^\Gamma}{d \hbx_i^\Gamma}$ are required by the SQP solver discussed in Section \ref{sec:sqp_solver}.
The autoencoders are trained by minimizing the MSE loss defined in equation \eref{eq:autoencoder_AbsMSE_loss}.

\subsection{Strong ROM-port formulation}
Next we detail the architectures used for the strong ROM-port constraint formulation.
As before, we use a single-layer architecture for the encoders and decoders with a smooth, non-polynomial activation function. 
The interior state encoders $\bh_i^\Omega$ and decoders $\bg_i^\Omega$ have the same architecture as in the weak FOM-port constraint formulation.
Thus we focus on the interface encoders $\bh_i^\Gamma$ and decoders $\bg_i^\Gamma$. 
As stated in Section \ref{sec:nmrom}, in the strong ROM-port case, the interface encoders $\bh_i^\Gamma$ and decoders $\bg_i^\Gamma$ are composed of encoders $\bh_j^p$ and decoders $\bg_j^p$ for ports $P(j)$.
These encoders $\bh_j^p$ and decoders $\bg_j^p$ are of the form 
\begin{subequations}\label{eq:autoencoder_architecture_port}
    \begin{align}
        \bh_j^p&:\real^{N_j^p}\to\real^{n_j^p}, &
        \bh_j^p(\bx_j^p) = \bW_2^{\bh_j^p}\bsigma_j^p(\bW_1^{\bh_j^p} \bx_j^p + \bb_1^{\bh_j^p}), \\
        \bg_j^p&:\real^{n_j^p}\to\real^{N_j^p}, &
        \bg_j^p(\hbx_j^p) = \bW_2^{\bg_j^p}\bsigma_j^p(\bW_1^{\bg_j^p} \hbx_j^p + \bb_1^{\bg_j^p}), 
    \end{align}
\end{subequations}
where
\begin{subequations}
    \begin{align}
        \bW_1^{\bg_j^p}, \left(\bW_2^{\bh_j^p} \right)^T &\in \real^{w_j^p \times n_j^p},&
        \bW_2^{\bg_j^p}, \left(\bW_1^{\bh_j^p} \right)^T &\in \real^{N_j^p \times w_j^p}, \\
        \bb_1^{\bh_j^p} & \in \real^{w_j^p}, &
        \bb_1^{\bg_j^p} & \in \real^{w_j^p}, 
    \end{align}
\end{subequations}
$\bsigma_j^p$ are smooth, non-polynomial activation functions (e.g., Sigmoid or Swish), 
and where $w_j^p$ are the network widths
for all ports $P(j), j=1,\dots, n_p.$
The weight matrices $\bW_2^{\bg_j^p}$ and $\bW_1^{\bh_j^p}$ are sparse, while the remaining weights and biases are dense. 
As in the WFPC case, the width $w_j^p$ and the sparsity patterns of $\bW_2^{\bg_j^p}$ and $\bW_1^{\bh_j^p}$ are hyper-parameters that require tuning. 
The autoencoders are trained by minimizing the MSE loss defined in equation \eref{eq:autoencoder_AbsMSE_loss}.


Recall that the interface encoders $\bh_i^\Gamma$ and $\bg_i^\Gamma$ are computed using equations \eref{eq:gamma_port_encoder} and \eref{eq:gamma_map_port}, respectively. 
The encoders $\bh_i^\Gamma$ and decoders $\bg_i^\Gamma$ can be written in the form \seref{eq:autoencoder_architecture}{c, d} as follows. 
For a given subdomain $i$, let $j_1, \dots, j_{|Q(i)|}$ denote the indices of the subdomains contained in $Q(i).$
The weights and biases of $\bh_i^\Gamma$ and $\bg_i^\Gamma$ can then be assembled in block form as
\begin{subequations}\label{eq:interface_decoder_rom_port_blocks}
    \begin{align}
        \bW_1^{\bh_i^\Gamma} &= 
        \begin{bmatrix}
            \bW_1^{\bh_{j_1}^p} & & \\
            & \ddots & \\
            & & \bW_1^{\bh_{j_{|Q(i)|}}^p}
        \end{bmatrix}
        \begin{bmatrix}
            \bP_i^{j_1} \\ \vdots \\ \bP_i^{j_{|Q(i)|}}
        \end{bmatrix},&
        \bb_1^{\bh_i^\Gamma} &= 
        \begin{bmatrix}
            \bb_1^{\bh_{j_1}^p} \\ \vdots \\ \bb_1^{\bh_{j_{|Q(i)|}}^p}
        \end{bmatrix}, \\
        \bW_2^{\bh_i^\Gamma} &= 
        \begin{bmatrix}
            (\hbP_i^{j_1})^T & \dots & (\hbP_i^{j_{|Q(i)|}})^T
        \end{bmatrix}
        \begin{bmatrix}
            \bW_2^{\bh_{j_1}^p} & & \\
            & \ddots & \\
            & & \bW_2^{\bh_{j_{|Q(i)|}}^p}
        \end{bmatrix}, \\
        \bW_1^{\bg_i^\Gamma} &= 
        \begin{bmatrix}
            \bW_1^{\bg_{j_1}^p} & & \\
            & \ddots & \\
            & & \bW_1^{\bg_{j_{|Q(i)|}}^p}
        \end{bmatrix}
        \begin{bmatrix}
            \hbP_i^{j_1} \\ \vdots \\ \hbP_i^{j_{|Q(i)|}}
        \end{bmatrix},&
        \bb_1^{\bg_i^\Gamma} &= 
        \begin{bmatrix}
            \bb_1^{\bg_{j_1}^p} \\ \vdots \\ \bb_1^{\bg_{j_{|Q(i)|}}^p}
        \end{bmatrix}, \\
        \bW_2^{\bg_i^\Gamma} &= 
        \begin{bmatrix}
            (\bP_i^{j_1})^T & \dots & (\bP_i^{j_{|Q(i)|}})^T
        \end{bmatrix}
        \begin{bmatrix}
            \bW_2^{\bg_{j_1}^p} & & \\
            & \ddots & \\
            & & \bW_2^{\bg_{j_{|Q(i)|}}^p}
        \end{bmatrix},
    \end{align}
    with activation
    \begin{equation}
        \bsigma_i^\Gamma(\cdot) = 
        \begin{bmatrix}
            \bsigma_{j_1}^p(\cdot) \\ \vdots \\ \bsigma_{j_{|Q(i)|}}^p(\cdot)
        \end{bmatrix}.
    \end{equation}
\end{subequations}

\subsection{Hyper-Reduction} \label{sec:hyper_reduction}
\def\layersep{1.7cm}
\def\nodesep{0.8}
\def\nodesize{18}
\def\hrcolor{blue!65}
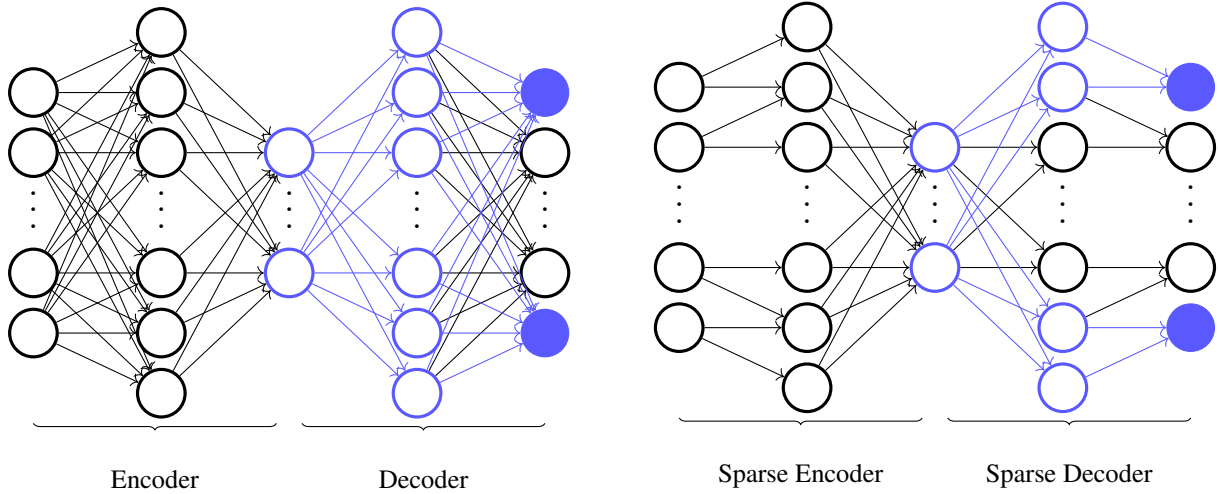
\begin{figure}[ht]
    \centering
    \begin{subfigure}[b]{0.48\textwidth}
    \begin{tikzpicture}[draw=black, node distance=0.05cm]
    \tikzstyle{neuron}=[circle, draw=black, very thick, minimum size=\nodesize,inner sep=0pt]
    \tikzstyle{hr_hidden}=[circle, draw=\hrcolor, very thick, minimum size=\nodesize,inner sep=0pt]
    \tikzstyle{hr_out}=[circle, draw=\hrcolor, fill=\hrcolor, minimum size=\nodesize,inner sep=0pt]
    
    \foreach \name / \y in {1, ..., 2}
        {\node[neuron] (I+\name) at (0, \y*\nodesep) {} ;
        \node[neuron] (I-\name) at (0, -\y*\nodesep) {} ;}
    \path (I+1)--(I-1) node [black, font=\Large, midway, sloped] {$\dots$};

    \foreach \name / \y in {1, ..., 3}
        {\node[neuron] (EH+\name) at (\layersep, \y*\nodesep) {} ;
        \node[neuron] (EH-\name) at (\layersep, -\y*\nodesep) {} ;}
    \path (EH+1)--(EH-1) node [black, font=\Large, midway, sloped] {$\dots$};

    \node[hr_hidden] (L+1) at (2*\layersep, 1*\nodesep) {} ;
    \node[hr_hidden] (L-1) at (2*\layersep, -1*\nodesep) {} ;
    \path (L+1)--(L-1) node [black, font=\Large, midway, sloped] {$\dots$};

    \foreach \name / \y in {1, ..., 3}
        {\node[hr_hidden] (DH+\name) at (3*\layersep, \y*\nodesep) {} ;
        \node[hr_hidden] (DH-\name) at (3*\layersep, -\y*\nodesep) {} ;}
    \path (DH+1)--(DH-1) node [black, font=\Large, midway, sloped] {$\dots$};

    \node[neuron] (O+1) at (4*\layersep, 1*\nodesep) {} ;
    \node[neuron] (O-1) at (4*\layersep, -1*\nodesep) {} ;
    \node[hr_out] (O+2) at (4*\layersep, 2*\nodesep) {} ;
    \node[hr_out] (O-2) at (4*\layersep, -2*\nodesep) {} ;
    \path (O+1)--(O-1) node [black, font=\Large, midway, sloped] {$\dots$};

    \foreach \source in {1, ..., 2}
        \foreach \dest in {1, ..., 3}
            {\draw[->] (I+\source) -- (EH+\dest); 
            \draw[->] (I-\source) -- (EH+\dest);
            \draw[->] (I+\source) -- (EH-\dest);
            \draw[->] (I-\source) -- (EH-\dest);}
            
    \foreach \source in {1, ..., 3}
        {\draw[->] (EH+\source) -- (L+1); 
        \draw[->] (EH+\source) -- (L-1);
        \draw[->] (EH-\source) -- (L+1); 
        \draw[->] (EH-\source) -- (L-1);}
        
    \foreach \dest in {1, ..., 3}
        {\draw[->, \hrcolor] (L+1) -- (DH+\dest); 
        \draw[->, \hrcolor] (L-1) -- (DH+\dest);
        \draw[->, \hrcolor] (L+1) -- (DH-\dest); 
        \draw[->, \hrcolor] (L-1) -- (DH-\dest);} 
        
    \foreach \source in {1, ..., 3}
        {\draw[->] (DH+\source) -- (O+1); 
        \draw[->] (DH+\source) -- (O-1);
        \draw[->] (DH-\source) -- (O+1); 
        \draw[->] (DH-\source) -- (O-1);} 
    \foreach \source in {1, ..., 3}
        {\draw[->, \hrcolor] (DH+\source) -- (O+2); 
        \draw[->, \hrcolor] (DH+\source) -- (O-2);
        \draw[->, \hrcolor] (DH-\source) -- (O+2); 
        \draw[->, \hrcolor] (DH-\source) -- (O-2);} 

    \draw[decorate, decoration={brace, mirror}] (0, -3.5*\nodesep) -- (1.9*\layersep, -3.5*\nodesep) 
    node[pos=0.5, below=14pt, black]{Encoder};
    \draw[decorate, decoration={brace, mirror}] (2.1*\layersep, -3.5*\nodesep) -- (4*\layersep, -3.5*\nodesep) 
    node[pos=0.5, below=14pt, black]{Decoder};
    \end{tikzpicture}
    \end{subfigure}
    \hfill
    \begin{subfigure}[b]{0.48\textwidth}
    \begin{tikzpicture}[draw=black, node distance=0.05cm]
    \tikzstyle{neuron}=[circle, draw=black, very thick, minimum size=\nodesize,inner sep=0pt]
    \tikzstyle{hr_hidden}=[circle, draw=\hrcolor, very thick, minimum size=\nodesize,inner sep=0pt]
    \tikzstyle{hr_out}=[circle, draw=\hrcolor, fill=\hrcolor, minimum size=\nodesize,inner sep=0pt]

    \foreach \name / \y in {1, ..., 2}
        {\node[neuron] (I+\name) at (0, \y*\nodesep) {} ;
        \node[neuron] (I-\name) at (0, -\y*\nodesep) {} ;}
    \path (I+1)--(I-1) node [black, font=\Large, midway, sloped] {$\dots$};

    \foreach \name / \y in {1, ..., 3}
        {\node[neuron] (EH+\name) at (\layersep, \y*\nodesep) {} ;
        \node[neuron] (EH-\name) at (\layersep, -\y*\nodesep) {} ;}
    \path (EH+1)--(EH-1) node [black, font=\Large, midway, sloped] {$\dots$};

    \node[hr_hidden] (L+1) at (2*\layersep, 1*\nodesep) {} ;
    \node[hr_hidden] (L-1) at (2*\layersep, -1*\nodesep) {} ;
    \path (L+1)--(L-1) node [black, font=\Large, midway, sloped] {$\dots$};

    \node[neuron] (DH+1) at (3*\layersep, 1*\nodesep) {};
    \node[neuron](DH-1) at (3*\layersep, -1*\nodesep) {};
    \node[neuron] (O+1) at (4*\layersep, 1*\nodesep) {};
    \node[neuron] (O-1) at (4*\layersep, -1*\nodesep) {};
    
    \foreach \name / \y in {2, ..., 3}
        {\node[hr_hidden] (DH+\name) at (3*\layersep, \y*\nodesep) {} ;
        \node[hr_hidden] (DH-\name) at (3*\layersep, -\y*\nodesep) {} ;}
    \path (DH+1)--(DH-1) node [black, font=\Large, midway, sloped] {$\dots$};

    \node[hr_out] (O+2) at (4*\layersep, 2*\nodesep) {} ;
    \node[hr_out] (O-2) at (4*\layersep, -2*\nodesep) {} ;
    \path (O+1)--(O-1) node [black, font=\Large, midway, sloped] {$\dots$};

    \foreach \dest in {1, ..., 2}
        {\draw[->] (I+1) -- (EH+\dest);
        \draw[->]  (I-1) -- (EH-\dest);}
    \foreach \dest in {2, ..., 3}
        {\draw[->] (I+2) -- (EH+\dest);
        \draw[->]  (I-2) -- (EH-\dest);}
            
    \foreach \source in {1, ..., 3}
        {\draw[->] (EH+\source) -- (L+1); 
        \draw[->] (EH+\source) -- (L-1);
        \draw[->] (EH-\source) -- (L+1); 
        \draw[->] (EH-\source) -- (L-1);}

    \draw[->] (L-1) -- (DH+1);
    \draw[->] (L+1) -- (DH+1);
    \draw[->] (L-1) -- (DH-1);
    \draw[->] (L+1) -- (DH-1);
    \foreach \dest in {2, ..., 3}
        {\draw[->, \hrcolor] (L+1) -- (DH+\dest); 
        \draw[->, \hrcolor] (L-1) -- (DH+\dest);
        \draw[->, \hrcolor] (L+1) -- (DH-\dest); 
        \draw[->, \hrcolor] (L-1) -- (DH-\dest);} 
        
    \foreach \source in {1, ..., 2}
        {\draw[->] (DH+\source) -- (O+1);
        \draw[->] (DH-\source) -- (O-1);}
    \foreach \source in {2, ..., 3}
        {\draw[->, \hrcolor] (DH+\source) -- (O+2);
        \draw[->, \hrcolor] (DH-\source) -- (O-2);}
        
    \draw[decorate, decoration={brace, mirror}] (0, -3.5*\nodesep) -- (1.9*\layersep, -3.5*\nodesep) 
    node[pos=0.5, below=14pt, black]{Sparse Encoder};
    \draw[decorate, decoration={brace, mirror}] (2.1*\layersep, -3.5*\nodesep) -- (4*\layersep, -3.5*\nodesep) 
    node[pos=0.5, below=14pt, black]{Sparse Decoder};
    \end{tikzpicture}
    \end{subfigure}
    \caption{Left: Dense autoencoder. The HR nodes are represented by solid blue neurons, and the nodes required to compute the HR nodes are outlined in blue. Notice that each node in the decoder hidden layer are required to compute the HR nodes in the output layer.
    Right: Sparse autoencoder. The encoder input layer and decoder output layer are sparsely connected, and only the blue-outlined hidden nodes are required to compute the HR nodes. The sparse output layer allows one to only keep track of the blue connections to evaluate $\bg_i^\Omega$, $\bg_i^\Gamma$ and their Jacobians, resulting in computational speedup.}
    \label{fig:autoencoder_architecture}
\end{figure}

If no hyper-reduction (HR) is applied (i.e. $\bB_i=\bI$) when solving DD ROM \eref{eq:dd_lspg_NLP}, the computational savings from the ROM is limited because the evaluation of the residuals $\br_i$ and their Jacobians still scales with the dimension of the FOM.
Thus HR is applied to decrease the computational complexity caused by the nonlinearity of $\br_i$, and increase the computational speedup. 
Possible HR approaches include collocation ($\bB_i = \bZ_i$)
and gappy POD ($\bB_i = (\bZ_i \bPhi_i^r)^\dag \bZ_i$) 
\cite{REverson_LSirovich_1995}, 
\cite{CHoang_YChoi_KCarlberg_2021a}.
In both cases, only a subsample of the residual components and their corresponding Jacobian components are computed. 
This subsample is determined by the row-sampling matrix $\bZ_i$, which is typically computed greedily (see Remark \ref{rmk:greedy_hr}). 

Now for both cases $\bB_i = \bZ_i$ and $\bB_i = (\bZ_i \bPhi_i^r)^\dag \bZ_i$, one must compute the products $\bZ_i \br_i$, $\bZ_i \frac{\partial \br_i}{\partial \bx_i^\Omega}$, and $\bZ_i\frac{\partial \br_i}{\partial \bx_i^\Gamma}$.
In implementation, instead of computing matrix-vector or matrix-matrix products, 
one only needs to compute the entries of $\br_i$ and rows of $\frac{\partial \br_i}{\partial \bx_i^\Omega}$ and $\frac{\partial \br_i}{\partial \bx_i^\Gamma}$ that are sampled by $\bZ_i$.
Hence the application of HR is typically code-intrusive.
Moreover, since only a subset of the entries of $\bg_i^\Omega(\hbx_i^\Omega)$ and $\bg_i^\Gamma(\hbx_i^\Gamma)$ are needed to evaluate $\bZ_i \br_i$, $\bZ_i \frac{\partial \br_i}{\partial \bx_i^\Omega}$, and $\bZ_i\frac{\partial \br_i}{\partial \bx_i^\Gamma}$,
only the corresponding outputs of the decoders $\bg_i^{\Omega}$ and $\bg_i^\Gamma$ need to be kept track of. 
This motivates the use of a sparsity mask in the last layer of the decoders, which was first introduced in the context of NM-ROM in \cite{YKim_YChoi_DWidemann_TZohdi_2022a}. 

Indeed, in the case of a dense linear layer, each node in the hidden layer is needed to compute one node in the output layer, thus limiting the computational savings gained through HR.
If instead a sparsity mask is applied to the layer, only a subset of the hidden nodes is required to compute a node in the output layer. 
This allows for the computation of a {\it subnet}, which only keeps track of the nodes used to compute the output nodes remaining after HR. 
We discuss the computation of a subnet in Section \ref{sec:subnet}. 
Figure \ref{fig:autoencoder_architecture} provides a visualization of a subnet.
In this paper, we also apply the transpose of decoder sparsity mask to the input layer of the encoder, resulting in autoencoders whose architectures are (approximately) symmetric across the latent layer.
We found that this choice gave improved performance (i.e. ROM accuracy) over the architectures used in \cite{YKim_YChoi_DWidemann_TZohdi_2022a}, which use dense encoder input layers. 

\begin{remark}\label{rmk:greedy_hr}
    Following \cite{CHoang_YChoi_KCarlberg_2021a} and \cite{YKim_YChoi_DWidemann_TZohdi_2022a}, we use \cite[Algo. 3]{KTCarlberg_CFarhat_JCortial_DAmsallem_2013a} to greedily compute a row sampling matrix $\bZ_i$. This approach relies upon the computation of a residual basis $\bPhi_i^r$ for each subdomain. In practice, these residual bases are computed by applying POD to {\it residual snapshots}, which are collected from the iteration history of Newton's method when computing interior- and interface-state snapshots for ROM training. 
\end{remark}

\begin{remark}
    For the WFPC case, the products $\bC\bA_i\bg_i^\Gamma(\hbx_i^\Gamma)$ and
    $\hblambda^T\bC\bA_i\frac{d \bg_i^\Gamma}{d \hbx_i^\Gamma}(\hbx_i^\Gamma)$ 
    coming from the equality constraint appear in the KKT system \eref{eq:sqp_system} for the SQP solver. 
    For LS-ROM, since the Jacobian of $\bg_i^\Gamma$ is nothing but the POD basis matrix $\bPhi_i^\Gamma$, one can easily precompute $\bC\bA_i\bPhi_i^\Gamma$, thus making HR unnecessay for these quantities. 
    However for NM-ROM, $d \bg_i^\Gamma /d \hbx_i^\Gamma$ must be re-evaluated at each iteration of the SQP solver, 
    thus introducing additional computation expense that is not present in LS-ROM. 
    Currently, these quantities do not undergo HR in the NM-ROM case, and hence the decoders $\bg_i^\Gamma$ for the entire interface states must be kept track of. 
    While this limits the computational savings that can be obtained through HR, in practice the dimension of the FOM interface states $N_i^\Gamma$ is much smaller than the dimension of the FOM interior states $N_i^\Omega$, thus making the HR of $\bg_i^\Gamma$ less critical than the HR of $\bg_i^\Omega$. 
    This issue is not present in the SRPC case because the constraints are purely linear.
\end{remark}

\begin{remark}
    In practice, the pattern of the sparsity mask is determined by a number of hyper parameters to be tuned by the user. 
    Further details on the sparsity pattern used for our numerical results is discussed in Section \ref{sec:numerics}.
\end{remark}

\subsection{Construction of a Subnet}\label{sec:subnet}
The authors in \cite[Sec. 4.4.1]{YKim_YChoi_DWidemann_TZohdi_2022a} discuss the construction of a subnet in terms of gradients of the loss function with respect to the weights and biases of the sparse decoder. 
In this section, we present an alternative method for constructing the subnet solely by keeping track of indices of HR nodes. 
For simplicity, we consider a generic decoder $\bg:\real^n\to \real^N$ of the form 
(\ref{eq:autoencoder_architecture}b, d) with width $w$. Computing subnets for each decoder, e.g., $\bg_i^\Omega$ and $\bg_i^\Gamma$, follows the same procedure. Recall that in the architecture considered in this paper, $\bW_2\in \real^{N\times 
 w}$ is sparse and $\bW_1\in \real^{w\times n}$ is dense. 

Let $\cI_{o} \subset \set{1, \dots, N}$ denote the indices of the outputs of $\bg$ that are selected through HR. To find the indices of the hidden nodes required to compute the HR nodes, find the index set $\cI_h$ where
$$\cI_h=\set{j \in \set{1, \dots, w} \; | \; \exists \; i \in \cI_o \; {\rm s.t.} \; (\bW_2)_{ij} \neq 0}.$$
The index sets $\cI_o$ and $\cI_h$ contain the indices of all nonzero elements in $\bW_2$. 
Now let 
$i_1, \dots, i_{|\cI_o|}$ and $j_1, \dots, j_{|\cI_h|}$ denote the elements of $\cI_o$ and $\cI_h$ in ascending order, respectively.
Define the matrix $\tbW_2 \in \real^{|\cI_o|\times |\cI_h|}$ as 
$$
(\tbW_2)_{\ell, k} = (\bW_2)_{i_\ell, j_k}, \quad \forall \; 
\ell = 1, \dots, |\cI_o|, \;
k = 1, \dots, |\cI_h|. 
$$
The matrix $\tbW_2$ precisely consists of the connections in the subnet that remain after HR. 
Next, since the activation $\bsigma$ acts element-wise, the connections that remain in the first layer of the subnet can be represented by $\tbW_1\in \real^{|\cI_h|\times n}$, which consists of nothing but the rows in $\bW_1$ corresponding to the index set $\cI_h$:
$$
(\tbW_1)_{k, :} = \bW_{j_k, :}, \quad \forall \; k = 1, \dots, |\cI_h|. 
$$
Lastly, the subnet bias $\tbb_1 \in \real^{|\cI_h|}$ is similarly defined as 
$(\tbb_1)_k = (\bb_1)_{j_k}$ for all $k=1, \dots, |\cI_h|$. 
The subnet $\tbg:\real^n\to \real^{|\cI_o|}$ is then defined as 
\begin{equation}\label{eq:subnet}
    \tbg(\hbx) = \tbW_2 \bsigma (\tbW_1 \hbx + \tbb_1). 
\end{equation}

\begin{remark}
    This framework for computing a subnet can easily be extended to neural networks with arbitrarily many sparse linear layers provided that the sparsity patterns for each layer's weight matrix is known. Thus one could construct deep sparse autoencoders with narrower width than the architectures considered here. However, for this paper we only consider single-layer, wide, sparse decoders. 
\end{remark}

\section{Error Analysis}\label{sec:error_analysis}
We present {\it a priori} and {\it a posteriori} error bounds analogous to those found in \cite{CHoang_YChoi_KCarlberg_2021a}. 
To simplify notation, analogous to the notation in Section~\ref{sec:sqp_convergence}, we denote the optimal solutions to 
the FOM \eref{eq:fom_dd_nlp}, to the ROM \eref{eq:dd_lspg_NLP_general}, 
and  the ROM solution lifted to the FOM state space as
\begin{align}\label{eq:error_notation}
    \bx^* &= 
    \begin{bmatrix}
        \bx_1^{\Omega*}\\ \bx_1^{\Gamma*} \\ \vdots \\
        \bx_{n_\Omega}^{\Omega*}\\ \bx_{n_\Omega}^{\Gamma*}
    \end{bmatrix} \in \real^{N_{D}}, &
    \hbx^* &= 
        \begin{bmatrix}
            \hbx_1^{\Omega*}\\ \hbx_1^{\Gamma*} \\ \vdots \\
            \hbx_{n_\Omega}^{\Omega*}\\ \hbx_{n_\Omega}^{\Gamma*}
        \end{bmatrix} \in \real^{n_{D}}, &
    \bg(\hbx^*) &= 
    \begin{bmatrix}
        \bg_1^\Omega(\hbx_1^{\Omega*})\\ \bg_1^\Gamma(\hbx_1^{\Gamma*}) \\ \vdots \\
        \bg_{n_\Omega}^\Omega(\hbx_{n_\Omega}^{\Omega*})\\ \bg_{n_\Omega}^\Gamma(\hbx_{n_\Omega}^{\Gamma*})
    \end{bmatrix} \in \real^{N_{D}},
\end{align}
respectively,
where $N_{D}=\sum_{i=1}^{n_\Omega} (N_i^\Omega + N_i^\Gamma)$ and, as before, 
$n_{D}=\sum_{i=1}^{n_\Omega} (n_i^\Omega + n_i^\Gamma)$.
We also define the FOM constraint matrix 
\begin{equation}\label{eq:constraint_mat_large}
    \bA = 
    \begin{bmatrix}
        \bzero & \bA_1 & \dots & \bzero & \bA_{n_\Omega}
    \end{bmatrix} \in \real^{N_{A}\times N_D}
\end{equation}
so that the constraints (\ref{eq:fom_dd_nlp}b) can be written as $\bA\bx = \bzero$.
As in Section~\ref{sec:sqp_solver}, we define the constraint functions $\tbA_i: \real^{n_i^\Gamma} \to \real^{n_A}$,
where $\tbA_i(\hbx_i^\Omega) = \bC \bA_i \bg_i^\Gamma(\hbx_i^\Gamma)$ in the WFPC case \eref{eq:dd_lspg_NLP}
and $\tbA_i(\hbx_i^\Gamma) = \hbA_i \hbx_i^\Gamma$ in the SRPC case  \eref{eq:dd_lspg_NLP_rom_port},
so that the DD-LSPG-ROMs  \eref{eq:dd_lspg_NLP} and  \eref{eq:dd_lspg_NLP_rom_port} can be written as
\eref{eq:dd_lspg_NLP_general}.
We define the ROM constraint function $\tbA:\real^{n_D} \to \real^{n_A}$ as 
\begin{equation}\label{eq:constraint_function_rom}
    \tbA(\hbx) = \sum_{i=1}^{n_\Omega}\tbA_i(\hbx_i^\Gamma),
\end{equation}
so that the constraints  (\ref{eq:dd_lspg_NLP_general}b) can be written as $\tbA(\hbx) = \bzero$.
Lastly we define the feasible set 
\begin{align}\label{eq:feasible_sets}
    \cS_{\rm ROM} &= \set{\hbx \in \real^{n_D} \; : \; \tbA(\hbx) = \bzero}
\end{align}
for \eref{eq:dd_lspg_NLP_general}.

The next two results provide basic error bounds between a solution to the FOM (\ref{eq:fom_dd}) 
and solutions to the DD-LSPG-ROM   \eref{eq:dd_lspg_NLP} or  \eref{eq:dd_lspg_NLP_rom_port}.

\begin{theorem}[{\it A posteriori}  error bound]\label{thm:a_posteriori_bound}
         Let $\bx^*\in \real^{N_D}$ be a solution to the FOM (\ref{eq:fom_dd}) and let
         $\hbx^*\in \real^{n_D}$ be a (local) solution to the DD-LSPG-ROM   \eref{eq:dd_lspg_NLP} or 
         \eref{eq:dd_lspg_NLP_rom_port}. 
        If the residual is inverse Lipschitz continuous, that is, if there exists $\kappa_\ell > 0$ such that 
        \begin{subequations}\label{eq:error_bound_assumptions_1}
        \begin{equation}
            \left(\sum_{i=1}^{n_\Omega}\norm{\br_i(\by_i^\Omega, \by_i^\Gamma)-\br_i(\bz_i^\Omega, \bz_i^\Gamma)}_2^2\right)^{1/2}
            \geq \kappa_\ell \norm{\by - \bz}_2 \qquad \forall \; \by, \bz \in \real^{N_D},
        \end{equation}
        and if there exists $P > 0$ such that 
        \begin{equation}
            \left(\sum_{i=1}^{n_\Omega}\norm{\bB_i\br_i(\by_i^\Omega, \by_i^\Gamma)}_2^2\right)^{1/2}
            \geq P
            \left(\sum_{i=1}^{n_\Omega}\norm{\br_i(\by_i^\Omega, \by_i^\Gamma)}_2^2\right)^{1/2}
            \qquad \forall \; \by \in \bg(\cS_{\rm ROM}),
        \end{equation}
        \end{subequations}
        then 
        \begin{align}\label{eq:a_posteriori_bound}
            \norm{\bx^* - \bg(\hbx^*)}_2 
             \leq \frac{1}{P\kappa_\ell}
            \left(\sum_{i=1}^{n_\Omega}\norm{\bB_i\br_i(\bg_i^\Omega(\hbx^{\Omega *}), \bg_i^\Gamma(\hbx^{\Gamma *})}_2^2\right)^{1/2}. 
        \end{align}
\end{theorem}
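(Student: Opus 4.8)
The plan is to chain two inequalities, exploiting that $\bx^*$ solves the FOM \eref{eq:fom_dd} and therefore every subdomain residual vanishes at it, i.e. $\br_i(\bx_i^{\Omega*}, \bx_i^{\Gamma*}) = \bzero$ for $i = 1, \dots, n_\Omega$. The target \eref{eq:a_posteriori_bound} bounds the lifted ROM error $\norm{\bx^* - \bg(\hbx^*)}_2$ by the HR-weighted residual evaluated at the ROM solution, so the whole argument reduces to substituting the right quantities into the two hypotheses and combining the resulting estimates.

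First I would apply the inverse Lipschitz hypothesis (\ref{eq:error_bound_assumptions_1}a) with the choice $\by = \bg(\hbx^*)$ and $\bz = \bx^*$, whose block components (in the sense of \eref{eq:error_notation}) are $(\by_i^\Omega, \by_i^\Gamma) = (\bg_i^\Omega(\hbx_i^{\Omega*}), \bg_i^\Gamma(\hbx_i^{\Gamma*}))$ and $(\bz_i^\Omega, \bz_i^\Gamma) = (\bx_i^{\Omega*}, \bx_i^{\Gamma*})$. Since the residuals vanish at $\bx^*$, each difference $\br_i(\by_i^\Omega,\by_i^\Gamma) - \br_i(\bz_i^\Omega,\bz_i^\Gamma)$ collapses to $\br_i(\bg_i^\Omega(\hbx_i^{\Omega*}), \bg_i^\Gamma(\hbx_i^{\Gamma*}))$, giving
\begin{align*}
\kappa_\ell \norm{\bg(\hbx^*) - \bx^*}_2 \le \left(\sum_{i=1}^{n_\Omega}\norm{\br_i(\bg_i^\Omega(\hbx_i^{\Omega*}), \bg_i^\Gamma(\hbx_i^{\Gamma*}))}_2^2\right)^{1/2}.
\end{align*}

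Next I would control the full (unweighted) subdomain residual on the right by the HR-weighted one using hypothesis (\ref{eq:error_bound_assumptions_1}b). That hypothesis holds for all $\by \in \bg(\cS_{\rm ROM})$, so I must first observe that $\bg(\hbx^*)$ lies in this set: because $\hbx^*$ is a (local) solution of \eref{eq:dd_lspg_NLP} or \eref{eq:dd_lspg_NLP_rom_port}, it is in particular feasible, hence $\hbx^* \in \cS_{\rm ROM}$ and thus $\bg(\hbx^*) \in \bg(\cS_{\rm ROM})$. Applying (\ref{eq:error_bound_assumptions_1}b) at $\by = \bg(\hbx^*)$ and rearranging yields
\begin{align*}
\left(\sum_{i=1}^{n_\Omega}\norm{\br_i(\bg_i^\Omega(\hbx_i^{\Omega*}), \bg_i^\Gamma(\hbx_i^{\Gamma*}))}_2^2\right)^{1/2} \le \frac{1}{P}\left(\sum_{i=1}^{n_\Omega}\norm{\bB_i\br_i(\bg_i^\Omega(\hbx_i^{\Omega*}), \bg_i^\Gamma(\hbx_i^{\Gamma*}))}_2^2\right)^{1/2}.
\end{align*}
Substituting this into the previous display and dividing by $\kappa_\ell$ produces exactly \eref{eq:a_posteriori_bound}, using $\norm{\bx^* - \bg(\hbx^*)}_2 = \norm{\bg(\hbx^*) - \bx^*}_2$.

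I do not expect a genuine analytical obstacle here: the result is a clean consequence of the two structural hypotheses, and the constant $1/(P\kappa_\ell)$ arises directly from composing the two bounds, with no further estimation required. The only points that need care are bookkeeping in nature, namely verifying that the block decomposition of $\bg(\hbx^*)$ inserted into (\ref{eq:error_bound_assumptions_1}a) matches the partition of $\bx^*$ assumed there, and confirming the feasibility $\hbx^* \in \cS_{\rm ROM}$ so that (\ref{eq:error_bound_assumptions_1}b) is legitimately applicable at $\by = \bg(\hbx^*)$. Neither hypothesis uses optimality of $\hbx^*$ beyond feasibility, so the bound is valid for any feasible approximate ROM solution.
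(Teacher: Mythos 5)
Your proposal is correct and follows essentially the same route as the paper's proof: apply the inverse Lipschitz bound (\ref{eq:error_bound_assumptions_1}a) with the FOM residual vanishing at $\bx^*$, then apply (\ref{eq:error_bound_assumptions_1}b) at $\by = \bg(\hbx^*)$ and combine. Your explicit check that feasibility of $\hbx^*$ places $\bg(\hbx^*)$ in $\bg(\cS_{\rm ROM})$ is a small bookkeeping point the paper leaves implicit, but otherwise the arguments coincide.
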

\begin{proof}
    Using (\ref{eq:error_bound_assumptions_1}a) and the fact  that $\bx^*\in \real^{N_D}$ solves the FOM (\ref{eq:fom_dd})
    gives
    \begin{align*}
        \norm{\bx^*-\bg(\hbx^*)}_2^2 
        \leq \frac{1}{\kappa_\ell^2}
        \sum_{i=1}^{n_\Omega}\norm{\br_i(\bx_i^{\Omega *}, \bx_i^{\Gamma *})-\br_i(\bg_i^\Omega(\hbx_i^{\Omega *}), \bg_i^\Gamma(\hbx_i^{\Gamma *}))}_2^2
         \leq \frac{1}{\kappa_\ell^2}
        \sum_{i=1}^{n_\Omega}\norm{ \br_i(\bg_i^\Omega(\hbx_i^{\Omega *}), \bg_i^\Gamma(\hbx_i^{\Gamma *}))}_2^2.
    \end{align*}
    Applying (\ref{eq:error_bound_assumptions_1}b) with
    $(\by_i^\Omega, \by_i^\Gamma) = (\bg_i^\Omega(\hbx_i^{\Omega *}), \bg_i^\Gamma(\hbx_i^{\Gamma *}))$ gives
    the desired result.
\end{proof}

\begin{theorem}[{\it A priori}  error bound]\label{thm:a_priori_bound}
    Let $\bx^*\in \real^{N_D}$ be a solution to the FOM (\ref{eq:fom_dd}) and let
         $\hbx^*\in \real^{n_D}$ be a solution to the DD-LSPG-ROM   \eref{eq:dd_lspg_NLP} or 
         \eref{eq:dd_lspg_NLP_rom_port}. 
     If the inequalities (\ref{eq:error_bound_assumptions_1}a, b) hold and the HR residual is  Lipschitz continuous, i.e., 
     there exists $\kappa_u>0$ such that 
    \begin{equation}\label{eq:lipschitz_assumption}
        \left(\sum_{i=1}^{n_\Omega}\norm{\bB_i\br_i(\by_i^\Omega, \by_i^\Gamma)-\bB_i\br_i(\bz_i^\Omega, \bz_i^\Gamma)}_2^2\right)^{1/2} 
        \leq \kappa_u\norm{\by-\bz}_2 \qquad \forall \; \by, \bz \in \real^{N_D},
    \end{equation}
    then 
    \begin{equation}\label{eq:a_priori_bound}
        \norm{\bx^* - \bg(\hbx^*)}_2 \leq 
        \frac{\kappa_u}{P\kappa_\ell} \inf_{\hbw \in \cS_{\rm ROM}}\norm{\bx^*-\bg(\hbw)}_2.
    \end{equation}
\end{theorem}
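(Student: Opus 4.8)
The plan is to combine the \emph{a posteriori} estimate of Theorem~\ref{thm:a_posteriori_bound} with the optimality of the ROM solution $\hbx^*$ and the Lipschitz continuity \eref{eq:lipschitz_assumption} of the hyper-reduced residual. The starting point is the observation that, because $\bx^*$ solves the FOM (\ref{eq:fom_dd}), the subdomain residuals vanish, $\br_i(\bx_i^{\Omega*}, \bx_i^{\Gamma*}) = \bzero$, and hence $\bB_i \br_i(\bx_i^{\Omega*}, \bx_i^{\Gamma*}) = \bzero$ for every $i$. Since the assumptions (\ref{eq:error_bound_assumptions_1}a, b) are in force, Theorem~\ref{thm:a_posteriori_bound} applies directly and yields
\[
   \norm{\bx^* - \bg(\hbx^*)}_2 \le \frac{1}{P\kappa_\ell}\left(\sum_{i=1}^{n_\Omega}\norm{\bB_i\br_i\big(\bg_i^\Omega(\hbx_i^{\Omega*}), \bg_i^\Gamma(\hbx_i^{\Gamma*})\big)}_2^2\right)^{1/2},
\]
so it remains only to bound the hyper-reduced residual evaluated at the lifted ROM solution.

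Next I would exploit that $\hbx^*$ is a (global) minimizer of the ROM objective in \eref{eq:dd_lspg_NLP_general} over the feasible set $\cS_{\rm ROM}$. For any competitor $\hbw \in \cS_{\rm ROM}$, minimality gives
\[
   \sum_{i=1}^{n_\Omega}\norm{\bB_i\br_i\big(\bg_i^\Omega(\hbx_i^{\Omega*}), \bg_i^\Gamma(\hbx_i^{\Gamma*})\big)}_2^2 \le \sum_{i=1}^{n_\Omega}\norm{\bB_i\br_i\big(\bg_i^\Omega(\hbw_i^\Omega), \bg_i^\Gamma(\hbw_i^\Gamma)\big)}_2^2.
\]
I would then insert the vanishing term $\bB_i\br_i(\bx^*)=\bzero$ into the right-hand side and invoke the Lipschitz bound \eref{eq:lipschitz_assumption} with $\by = \bg(\hbw)$ and $\bz = \bx^*$, obtaining $\big(\sum_{i=1}^{n_\Omega}\norm{\bB_i\br_i(\bg(\hbw))}_2^2\big)^{1/2} \le \kappa_u \norm{\bg(\hbw) - \bx^*}_2$.

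Chaining these three inequalities yields $\norm{\bx^* - \bg(\hbx^*)}_2 \le \frac{\kappa_u}{P\kappa_\ell}\norm{\bx^* - \bg(\hbw)}_2$ for every $\hbw \in \cS_{\rm ROM}$; taking the infimum over $\hbw \in \cS_{\rm ROM}$ gives \eref{eq:a_priori_bound}. The argument is a short chain, so the only points requiring care are bookkeeping ones rather than a genuine obstacle. The first is that the comparison inequality uses \emph{global} optimality of $\hbx^*$ in \eref{eq:dd_lspg_NLP_general}; the second, and the step I would double-check most carefully, is that the Lipschitz hypothesis \eref{eq:lipschitz_assumption} is posited for all $\by, \bz \in \real^{N_D}$, which is precisely what lets me take $\bz = \bx^*$ even though the FOM state $\bx^*$ need not lie in the lifted ROM range $\bg(\cS_{\rm ROM})$. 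Finally, since both the WFPC \eref{eq:dd_lspg_NLP} and SRPC \eref{eq:dd_lspg_NLP_rom_port} formulations are subsumed by \eref{eq:dd_lspg_NLP_general} with $\cS_{\rm ROM}$ as the common feasible set, and since assumption (\ref{eq:error_bound_assumptions_1}b) is only needed on $\bg(\cS_{\rm ROM})$ where $\hbx^*$ itself is feasible, no separate treatment of the two cases is needed.
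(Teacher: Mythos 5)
Your proposal is correct and follows essentially the same route as the paper's proof: use (global) optimality of $\hbx^*$ over $\cS_{\rm ROM}$ to compare against an arbitrary feasible $\hbw$, insert the vanishing FOM residual $\br_i(\bx_i^{\Omega*},\bx_i^{\Gamma*})=\bzero$ so that the Lipschitz bound \eref{eq:lipschitz_assumption} controls the competitor's hyper-reduced residual by $\kappa_u\norm{\bx^*-\bg(\hbw)}_2$, and then chain with the \emph{a posteriori} bound of Theorem~\ref{thm:a_posteriori_bound}. Your bookkeeping of the square roots is in fact slightly cleaner than the paper's displayed inequality chain, and your remarks on where global optimality and the global (all of $\real^{N_D}$) Lipschitz hypothesis are used match the paper's implicit assumptions.
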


\begin{proof}
     Since  $\hbx^*\in \real^{n_D}$ is a solution to the DD-LSPG-ROM   \eref{eq:dd_lspg_NLP} or 
     \eref{eq:dd_lspg_NLP_rom_port}, any feasible $\hbw$, i.e., any  $\hbw \in \cS_{\rm ROM}$
     satisfies
    \begin{equation}\label{eq:a_priori_bound_proof1}
     \sum_{i=1}^{n_\Omega}\norm{\bB_i \br_i(\bg_i^\Omega(\hbx_i^{\Omega *}), \bg_i^\Gamma(\hbx_i^{\Gamma *}))}_2^2
      \leq \sum_{i=1}^{n_\Omega}\norm{\bB_i \br_i(\bg_i^\Omega(\hbw_i^{\Omega *}), \bg_i^\Gamma(\hbw_i^{\Gamma *}))}_2^2.
     \end{equation}
     Moreover, since $\bx^*\in \real^{N_D}$ solves the FOM (\ref{eq:fom_dd}), 
     $\br_i(\bx_i^{\Omega *}, \bx_i^{\Gamma *}) = \bzero$, for all $i=1, \dots, n_\Omega$,
     \eref{eq:lipschitz_assumption} and  \eref{eq:a_priori_bound_proof1} imply
    \begin{align*}
     \sum_{i=1}^{n_\Omega}\norm{\bB_i \br_i(\bg_i^\Omega(\hbx_i^{\Omega *}), \bg_i^\Gamma(\hbx_i^{\Gamma *}))}_2^2
    &\leq
      \sum_{i=1}^{n_\Omega}\norm{\bB_i\br_i(\bx_i^{\Omega *}, \bx_i^{\Gamma *})-\bB_i \br_i(\bg_i^\Omega(\hbw_i^{\Omega}),    
                                                                     \bg_i^\Gamma(\hbw_i^{\Gamma}))}_2^2 \\
    &\leq \kappa_u  \norm{\bx^* - \bg(\hbw)}_2 \qquad \mbox{ for all }  \hbw \in \cS_{\rm ROM}.
    \end{align*}
     Combining this result with the a-posterior bound \eref{eq:a_posteriori_bound} in Theorem \ref{thm:a_posteriori_bound} yields 
     $\norm{\bx^* - \bg(\hbx^*)}_2 \leq   \frac{\kappa_u}{P\kappa_\ell} \norm{\bx^*-\bg(\hbw)}_2$ for all $\hbw \in \cS_{\rm ROM}$,
    which implies \eref{eq:a_priori_bound}.
\end{proof}

\begin{remark}
As a consequence of Theorem \ref{thm:a_priori_bound}, if (\ref{eq:error_bound_assumptions_1}a, b) and \eref{eq:lipschitz_assumption} hold, and if $\bx^*$ is in the image of the $\bg$ over the feasible set $\cS_{\rm ROM}$ of (\ref{eq:dd_lspg_NLP}), i.e. if $\bx^*\in \bg(\cS_{\rm ROM})$, then $\bx^* = \bg(\hbx^*)$.
\end{remark}

The error bounds in Theorems \ref{thm:a_posteriori_bound} and \ref{thm:a_priori_bound} only involve the FOM and ROM
states, but not the Lagrange multipliers. However, the present error bounds require stronger assumptions such as 
(\ref{eq:error_bound_assumptions_1}a). 
Alternatively, one could try to extend the error analysis for ROMs applied to nonlinear systems.
such as those in \cite[Sec.~11.5]{AQuarteroni_AManzoni_FNegri_2016a}, \cite{ASchmidt_DWittwar_BHaasdonk_2020a}.
In the context of  the FOM (\ref{eq:fom_dd}) and  the DD-LSPG-ROM   \eref{eq:dd_lspg_NLP} or 
 \eref{eq:dd_lspg_NLP_rom_port} the role of the nonlinear residual in \cite[Sec.~11.5]{AQuarteroni_AManzoni_FNegri_2016a}, \cite{ASchmidt_DWittwar_BHaasdonk_2020a} would now be played by the system of first order necessary optimality conditions,
 given by \eref{eq:dd_nm_lspg_1st_order_opt} for the general ROM formulation \eref{eq:dd_lspg_NLP_general} and correspondingly
 for the  FOM (\ref{eq:fom_dd}). 
 However, these residuals involve the FOM states and Lagrange multipliers associated with  (\ref{eq:fom_dd}b),
 and ROM states and Lagrange multipliers associated with (\ref{eq:dd_lspg_NLP_general}b). Moreover, this
 analysis requires to relate the ROM states with the FOM states and the ROM Lagrange multipliers with the FOM Lagrange multipliers.
 The former is done via $ \bg(\hbx) \approx  \bx^*$. However, the connection between the Lagrange multipliers in the general
 case is still open. For linear PDEs and a PDE-based (as opposed to our algebraic) DD formulation, 
 \cite{AdeCastro_PBochev_PKuberry_ITezaur_2023a}  construct appropriate, so called trace-compatible
 reduced bases for the Lagrange multipliers (see e.g., \cite[Eq.~(14)]{AdeCastro_PBochev_PKuberry_ITezaur_2023a}).
 In the context of \cite{AdeCastro_PBochev_PKuberry_ITezaur_2023a}, the  reduced bases for the Lagrange multipliers
 determine the ROM constraints (\ref{eq:dd_lspg_NLP_general}b).
 In our setting we first derive ROM constraints (\ref{eq:dd_lspg_NLP_general}b), which yield ROM Lagrange multipliers,
 but no explicit construction of a reduced bases for these ROM Lagrange multipliers. This is subject of future research.


\section{Numerics}\label{sec:numerics}
We apply LS-ROM and NM-ROM with and without HR to the DD ROM with WFPC \eref{eq:dd_lspg_NLP} and with SRPC \eref{eq:dd_lspg_NLP_rom_port} for the 2D steady-state Burgers equation.
We use the following formula for computing the relative error between the FOM and ROM solutions:
\begin{equation}\label{eq:relative_error}
    e = \left(\frac{1}{n_\Omega}\sum_{i=1}^{n_\Omega} \frac{\norm{\bx_i^\Omega-\bg_i^\Omega(\hbx_i^\Omega)}_2^2+\norm{\bx_i^\Gamma-\bg_i^\Gamma(\hbx_i^\Gamma)}_2^2}{\norm{\bx_i^\Omega}_2^2+\norm{\bx_i^\Gamma}_2^2}\right)^{1/2}.
\end{equation}
The autoencoder training and subsequent computations in this section were performed on the Lassen machine at Lawrence Livermore National Laboratory, which consists of an IBM Power9 processor with NVIDIA V100 (Volta) GPUs, clock speed between 2.3-3.8 GHz, and 256 GB DDR4 memory. 

The implementation of the DD FOM, DD LS-ROM, and DD NM-ROM is done sequentially. 
However, to highlight potential advantages of a parallel implementation, the recorded wall clock time for the computation of the 
subdomain-specific quantities required by the SQP solver
is taken to be the largest wall clock time incurred among all subdomains. 
The wall clock time for the remaining steps of the SQP solver (e.g. assembling and solving the KKT system \eref{eq:sqp_system}, updating the interior- and interface-states and Lagrange multipliers \eref{eq:sqp_update}, etc.) is set to the overall wall clock time to execute the steps.
\subsection{2D Burgers' Equation}
In this experiment, we consider the 2D steady-state Burgers' equation on the domain $[-1, 1] \times [0, 0.05]$
\begin{subequations}\label{eq:burgers_pde_2d}
    \begin{align}
        u \frac{\partial u}{\partial x} + v \frac{\partial u}{\partial y} &= \nu \left(\frac{\partial^2 u}{\partial x^2} + \frac{\partial^2 u}{\partial y^2}\right), 
        \qquad (x, y) \in [-1, 1] \times [0, 0.05],
        \\
        u \frac{\partial v}{\partial x} + v \frac{\partial v}{\partial y} &= \nu \left(\frac{\partial^2 v}{\partial x^2} + \frac{\partial^2 v}{\partial y^2}\right),
        \qquad (x, y) \in [-1, 1] \times [0, 0.05],
    \end{align}
    where $\nu > 0$ is the viscosity. As in \cite{CHoang_YChoi_KCarlberg_2021a}, we consider the following exact solution, and its restriction to the boundary as Dirichlet boundary conditions: 
\end{subequations}
\begin{subequations}\label{eq:burgers_pde_exact_solution}
    \begin{align}
        u_{ex}(x, y; a, \lambda) &= -2\nu \left[a + \lambda\left(e^{\lambda(x-1)} - e^{-\lambda(x-1)}\right)\cos(\lambda y)\right]/\psi(x, y; a, \lambda), \\
        v_{ex}(x, y; a, \lambda) &= 2\nu \left[\lambda\left(e^{\lambda(x-1)} + e^{-\lambda(x-1)}\right)\sin(\lambda y)\right]/\psi(x, y; a, \lambda), 
    \end{align}
    where 
    \begin{equation}
        \psi(x, y; a, \lambda) = a(1+x) + \left(e^{\lambda(x-1)} + e^{-\lambda(x-1)}\right)\cos(\lambda y),
    \end{equation}
    and where $(a, \lambda)$ are parameters. 
\end{subequations}
The PDE is discretized using centered finite differences with $n_x+2$ uniformly spaced grid points in the $x$-direction and $n_y+2$ uniformly spaced grid points in the $y$-direction, resulting in grid points $(x_i, y_j)$ where 
\begin{align*}
x_i &= -1 + i h_x, & i &= 0, \dots, n_x+1, \\
y_j &=  j h_y, & j &= 0, \dots, n_y+1,
\end{align*}
where $h_x = 2/(n_x+1)$ and $h_y = 0.05/(n_y+1)$.
The solutions $u, v$ on the grid points are denoted $u_{ij} \approx u(x_i, y_j)$ and $v_{ij} \approx v(x_i, y_j)$. 
The PDE is then discretized using centered finite differences for the first and second derivative terms. The fully discretized system is given by
\begin{subequations}\label{eq:burgers_discretized}
    \begin{align}
        \bzero &= \br_u(\bu, \bv) = \bu \odot (\bB_x \bu - \bb_{u, x}) + \bv\odot(\bB_y \bu - \bb_{u, y}) + \bC \bu + \bc_u, \\
        \bzero &= \br_v(\bu, \bv) = \bu \odot (\bB_x \bv - \bb_{v, x}) + \bv\odot(\bB_y \bv - \bb_{v, y}) + \bC \bv + \bc_v, 
    \end{align}
\end{subequations}
where $\odot$ represents the Hadamard product, and where
\begin{subequations}
    \begin{align}
        \bu &=
        \begin{bmatrix}
            \bu^{[1]} \\ \vdots \\ \bu^{[n_y]}
        \end{bmatrix} \in \real^{n_xn_y}, \qquad 
        \bu^{[j]} = 
        \begin{bmatrix}
            u_{1, j} \\ \vdots \\ u_{n_x, j}
        \end{bmatrix} \in \real^{n_x}, \quad j = 1, \dots, n_y,\\
        \bv &=
        \begin{bmatrix}
            \bv^{[1]} \\ \vdots \\ \bv^{[n_y]}
        \end{bmatrix} \in \real^{n_xn_y}, \qquad 
        \bv^{[j]} = 
        \begin{bmatrix}
            v_{1, j} \\ \vdots \\ v_{n_x, j}
        \end{bmatrix}\in \real^{n_x}, \quad j = 1, \dots, n_y,
    \end{align}
    \begin{align}
        \bB_x &= -\frac{1}{2h_x} \left(\bI_{n_y} \otimes \tbB_x\right)\in \real^{n_xn_y \times n_x n_y}, \qquad
        \tbB_x = 
        \begin{bmatrix}
            0 & 1 \\
            -1 & \ddots & 1 \\
            & -1 & 0
        \end{bmatrix} \in \real^{n_x \times n_x}, \\
        \bB_y &= -\frac{1}{2h_y}\left(\tbB_y \otimes \bI_{n_x}\right) \in \real^{n_xn_y \times n_x n_y} \qquad
        \tbB_y = 
        \begin{bmatrix}
            0 & 1 \\
            -1 & \ddots & 1 \\
            & -1 & 0
        \end{bmatrix} \in \real^{n_y \times n_y}, \\
        \bC &= \frac{\nu}{h_x^2}\left(\bI_{n_y} \otimes \tbC_x\right) + \frac{\nu}{h_y^2} \left(\tbC_y \otimes \bI_{n_x}\right) \in \real^{n_xn_y \times n_x n_y}, \\
        \tbC_x &= 
        \begin{bmatrix}
            -2 & 1 \\
            1 & \ddots \\
            & 1 & -2 \\
        \end{bmatrix} \in \real^{n_x \times n_x}, \qquad
        \tbC_y =
        \begin{bmatrix}
            -2 & 1 \\
            1 & \ddots \\
            & 1 & -2 \\
        \end{bmatrix} \in \real^{n_y \times n_y}, 
    \end{align}
    \begin{align}
        \bb_{u,x} &= -\frac{1}{2h_x} (\bb_{ux\ell} - \bb_{uxr}), \qquad
        \bb_{u, y} = -\frac{1}{2h_y}(\bb_{uy\ell} - \bb_{uyr}), \\
        \bc_u &= \frac{\nu}{h_x^2}(\bb_{ux\ell} + \bb_{uxr}) + \frac{\nu}{h_y^2} (\bb_{uy\ell} + \bb_{uyr}) \\
        \bb_{v, x} &= -\frac{1}{2h_x} (\bb_{vx\ell} - \bb_{vxr}), \qquad
        \bb_{v, y} = -\frac{1}{2h_y}(\bb_{vy\ell} - \bb_{vyr}), \\
        \bc_v &= \frac{\nu}{h_x^2}(\bb_{vx\ell} + \bb_{vxr}) + \frac{\nu}{h_y^2} (\bb_{vy\ell} + \bb_{vyr}) 
    \end{align}
    \begin{align}
        \bb_{ux\ell} &= 
        \begin{bmatrix}
            u_{ex}(x_0, y_1) \\ \vdots \\ u_{ex}(x_0, y_{n_y})
        \end{bmatrix} \otimes 
        \begin{bmatrix}
            1 \\ 0 \\ \vdots \\ 0
        \end{bmatrix}_{n_x \times 1} \in \real^{n_x n_y}, \qquad
        \bb_{uxr} = 
        \begin{bmatrix}
            u_{ex}(x_{n_x+1}, y_1) \\ \vdots \\ u_{ex}(x_{n_x+1}, y_{n_y})
        \end{bmatrix} \otimes 
        \begin{bmatrix}
            0 \\ \vdots \\ 0 \\ 1
        \end{bmatrix}_{n_x \times 1} \in \real^{n_x n_y}, \\
        \bb_{uyb} &= 
        \begin{bmatrix}
            1 \\ 0 \\ \vdots \\ 0
        \end{bmatrix}_{n_y \times 1} 
        \otimes 
        \begin{bmatrix}
            u_{ex}(x_1, y_0) \\ \vdots \\ u_{ex}(x_{n_x}, y_0)
        \end{bmatrix} \in \real^{n_x n_y} \qquad
        \bb_{uyt} = 
        \begin{bmatrix}
            0 \\ \vdots \\ 0 \\ 1
        \end{bmatrix}_{n_y \times 1} 
        \otimes 
        \begin{bmatrix}
            u_{ex}(x_1, y_{n_y+1}) \\ \vdots \\ u_{ex}(x_{n_x}, y_{n_y+1})
        \end{bmatrix} \in \real^{n_x n_y} \\
        \bb_{vx\ell} &= 
        \begin{bmatrix}
            v_{ex}(x_0, y_1) \\ \vdots \\ v_{ex}(x_0, y_{n_y})
        \end{bmatrix} \otimes 
        \begin{bmatrix}
            1 \\ 0 \\ \vdots \\ 0
        \end{bmatrix}_{n_x \times 1} \in \real^{n_x n_y}, \qquad
        \bb_{vxr} = 
        \begin{bmatrix}
            v_{ex}(x_{n_x+1}, y_1) \\ \vdots \\ v_{ex}(x_{n_x+1}, y_{n_y})
        \end{bmatrix} \otimes 
        \begin{bmatrix}
            0 \\ \vdots \\ 0 \\ 1
        \end{bmatrix}_{n_x \times 1} \in \real^{n_x n_y}, \\
        \bb_{vyb} &= 
        \begin{bmatrix}
            1 \\ 0 \\ \vdots \\ 0
        \end{bmatrix}_{n_y \times 1} 
        \otimes 
        \begin{bmatrix}
            v_{ex}(x_1, y_0) \\ \vdots \\ v_{ex}(x_{n_x}, y_0)
        \end{bmatrix} \in \real^{n_x n_y} \qquad
        \bb_{vyt} = 
        \begin{bmatrix}
            0 \\ \vdots \\ 0 \\ 1
        \end{bmatrix}_{n_y \times 1} 
        \otimes 
        \begin{bmatrix}
            v_{ex}(x_1, y_{n_y+1}) \\ \vdots \\ v_{ex}(x_{n_x}, y_{n_y+1})
        \end{bmatrix} \in \real^{n_x n_y} .
    \end{align}
\end{subequations}

For the monolithic (single domain) FOM, we take $n_x = 480$, $n_y=24$, viscosity $\nu=0.1$, and parameters $(a, \lambda)\in \cD = [1, 10^4]\times [5, 25]$. 
The parameter $a$ corresponds to the distance of the shock from the left boundary, whereas $\lambda$ corresponds to the steepness of the shock, as illustrated in Fig. \ref{fig:burgers_parameters}. 
We use the ROMs to predict the case where $(a, \lambda) = (7692.5384, 21.9230).$
The SQP solver for the DD FOM, DD LS-ROM, and DD NM-ROM terminates when the $2$-norm of the right hand side of \eref{eq:sqp_system} is less than $10^{-4}$, or after $15$ iterations.

\begin{figure}[H]
    \centering
    \includegraphics[width=0.49\textwidth]{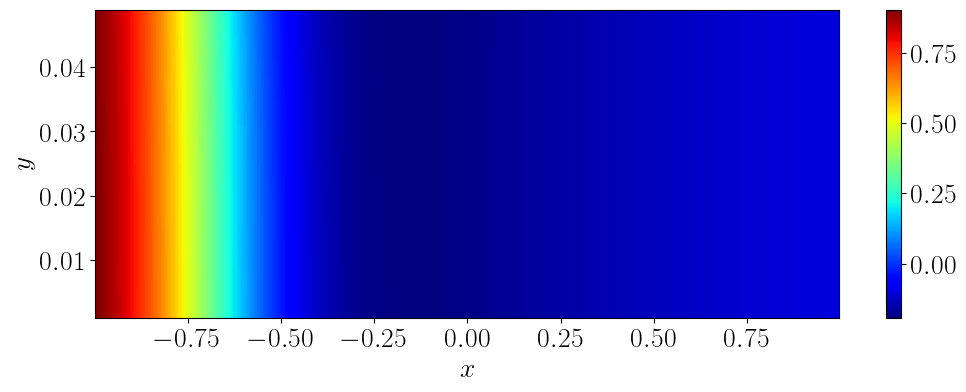}
    \includegraphics[width=0.49\textwidth]{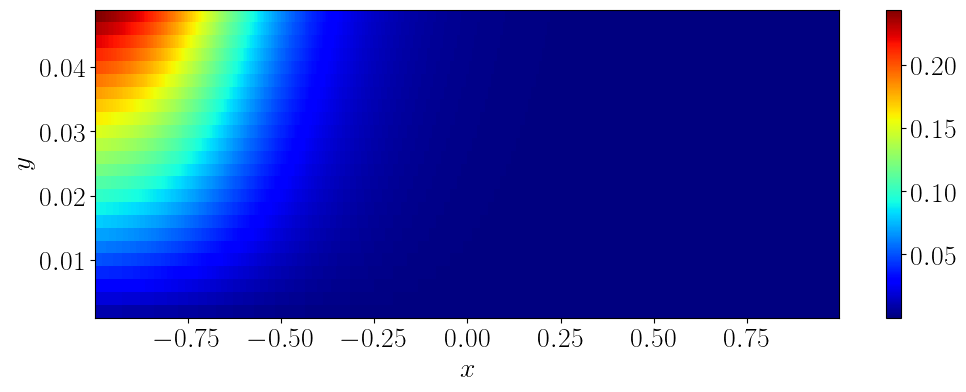}

    \includegraphics[width=0.49\textwidth]{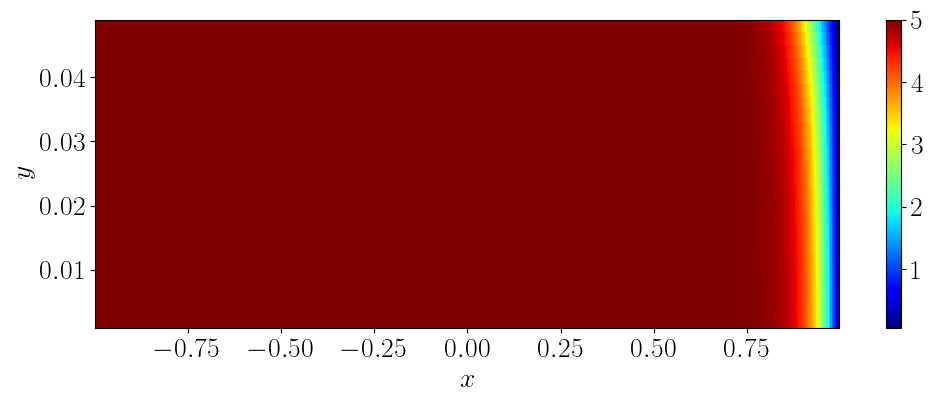}
    \includegraphics[width=0.49\textwidth]{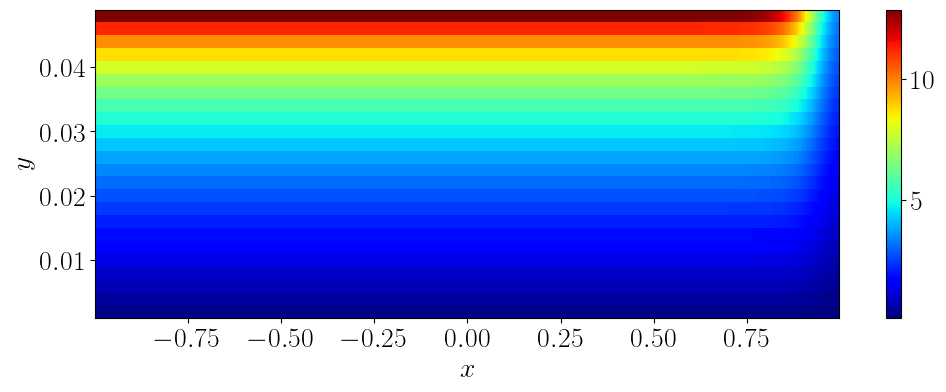}
    \caption{Top left: $u$-component with $(a, \lambda)=(10^4, 5)$; 
    Top right: $v$-component with $(a, \lambda)=(10^4, 5)$;
    Bottom left: $u$-component with $(a, \lambda)=(1, 25)$;
    Bottom right: $v$-component with $(a, \lambda)=(1, 25)$
    }
    \label{fig:burgers_parameters}
  \end{figure}

\subsection{Snapshot data collection}\label{sec:numerics_data_collection}
To compute ROMs, we first collect $6400$ snapshots for training with parameters $(a, \lambda)$ uniformly sampled in a $80 \times 80$ grid for the full-domain problem. These full-domain snapshots are then restricted to the interior, interface, and port states, which are then used for training. This is the so-called ``top-down" approach. 
The residual bases $\bPhi_i^r$ for each subdomain are computed by taking the Newton iteration history for $400$ state snapshots sampled on a $20\times 20$ $(a, \lambda)$ grid, and computing a POD basis with energy criterion $\nu = 10^{-10}.$ 
These $400$ state snapshots are then used to train RBF interpolator models (using Scipy's \texttt{RBFInterpolator} function) for each subdomain's interior and interface states, which are then used to compute an initial iterate for $(\hbx_i^\Omega, \hbx_i^\Gamma)$ for the SQP solver. 
The $(\hbx_i^\Omega, \hbx_i^\Gamma)$ initial iterates are then used to compute an initial iterate for the Lagrange multipliers $\blambda$ by applying a least-squares solver to equation (\ref{eq:optimality_residuals}b). 
The wall clock time to compute the initial iterates $(\hbx_i^\Omega, \hbx_i^\Gamma)$ is taken to be the largest wall clock time incurred among all subdomains, 
while the wall clock time to compute the initial iterate for $\blambda$ is the time required to sequentially solve the least-squares problem (\ref{eq:optimality_residuals}b).
The wall clock time to compute the initial iterate for NM-ROM using the RBF interpolator is included in the computation times and speedups reported.


\subsection{Autoencoder training}\label{sec:autoencoder_training}
For NM-ROM, we randomly split the state snapshots into $5760$ training snapshots and $640$ testing snapshots. 
For training the autoencoders, we use the MSE loss, the Adam optimizer over $2000$ epochs, and a batch size of $32$. 
We also normalize the snapshots so that all snapshot components are in $[-1,1]$, and apply a de-normalization layer to the output of the autoencoder. We apply early stopping with a stopping patience of $300$, and reduce the learning rate on plateau with an initial learning rate of $10^{-3}$ and a patience of $50$. 
The implementation was done using PyTorch, as well as the Pytorch Sparse and SparseLinear packages.

The sparsity masks used for the output layers of the decoders have a banded structure inspired by 2D finite difference stencils. 
Each row has three bands, where each band consists of contiguous nonzero entries, and where the band shifts to the right a specified amount from one row to the next. 
The number of nonzero entries per band and the number of columns the band shifts over are hyper-parameters
The separation between the bands in each row is equal to the product of the number of nonzeros per band and the column-shift per row.
These parameters, as well as the dimension of the interior and interface states, determine the width of the decoders.
Figure \ref{fig:sparsity_mask} provides a visualization for the decoder mask used. 
The transpose of these masks is used at input layer of the encoders.
\begin{figure}[H]
    \centering
    \includegraphics[width=0.75\textwidth]{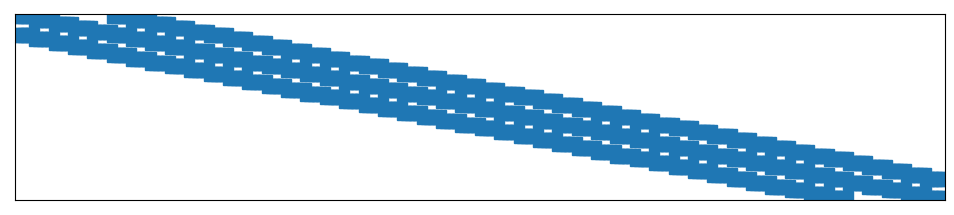}
    \caption{
        Three-banded sparsity mask for decoder
    }
    \label{fig:sparsity_mask}
\end{figure}


\subsection{Dense vs Sparse Encoder}\label{sec:dense_vs_sparse_encoder}
We first briefly compare the performance of a dense encoder with a sparse encoder.
We train two autoencoders, one with a fully dense encoder and one with a sparse encoder, on a coarse, single domain problem with $n_x=240$ and $n_y=12$.
Both decoders share the same sparse architecture.
The data collection, training, and relevant sparsity masks are identical to the procedures discussed in Section \ref{sec:autoencoder_training}. 
The discretization results in a FOM size of $5760$ and we used a ROM of size $4$.
Table \ref{tbl:dense_vs_sparse_encoder} summarizes the key performance differences between the two encoders. 
Importantly, the sparse encoder architecture achieves losses $4$ orders of magnitude smaller than the dense encoder with $2$ orders of magnitude fewer parameters.
\begin{table}[H]
    \centering
    \begin{tabular}{|c|c|c|c|c|}
        \hline
         & Train loss & Test loss & Width & \# encoder parameters\\
        \hline
        Dense encoder & $5.74 \times 10^{-2}$ & $1.56 \times 10^{-1}$ & $17280$ & $9.96 \times 10^{7}$\\
        \hline
        Sparse encoder & $7.21 \times 10^{-5}$ & $7.48 \times 10^{-5}$ & $28800$ & $2.3 \times 10^{5}$\\
        \hline
      \end{tabular}
    \caption{Comparison of dense and sparse encoder architectures.}
    \label{tbl:dense_vs_sparse_encoder}
\end{table}


\subsection{Single-domain NM-ROM vs DD NM-ROM}
Next we examine the per-subdomain reduction in the required number of autoencoder parameters for different DD configurations compared to the monolithic single-domain NM-ROM. See Table \ref{tbl:NN_params}.
In the single domain case, we solve the LSPG problem 
\begin{equation}\label{eq:monolithic_nmrom}
    \min_{\hbx} \; \norm{\bB\br\left(\bg\left(\hbx\right)\right)}_2^2
\end{equation}
using the Gauss-Newton method. 
The function $\bg:\real^{n_x}\to \real^{N_x}$ is the decoder of an autoencoder trained on snapshots of the monolithic single-domain FOM as discussed in Sections \ref{sec:nmrom}, \ref{sec:autoencoder_architecture}, and \ref{sec:autoencoder_training}, 
and $\bB\in \set{0,1}^{N_B\times N_x}$ is a collocation HR matrix, as discussed in Section \ref{sec:hyper_reduction}. 

\begin{table}[H]
    \centering
    \begin{tabular}{|c|c|c|c|c|}
        \hline
         Subdomains & Max \# subdomain params.\ & Reduction &  Total \# params. & Error\\
        \hline
          $1 \times 1$  & $2.995 \times 10^{6}$ &  $0.0$ \% & $2.995 \times 10^{6}$ & $1.08 \times 10^{-3}$\\
          \hline
          $2 \times 1$  & $1.147 \times 10^{6}$ & $61.7$ \% & $2.307 \times 10^{6}$ & $1.27 \times 10^{-3}$\\
          \hline
          $2 \times 2$  & $5.257 \times 10^{5}$ & $82.4$ \% & $2.384 \times 10^{6}$ & $2.42 \times 10^{-3}$\\
          \hline
          $4 \times 2$  & $2.617 \times 10^{5}$ & $91.3$ \% & $2.391 \times 10^{6}$ & $4.26 \times 10^{-3}$\\
          \hline
          $8 \times 2$  & $1.297 \times 10^{5}$ & $95.7$ \% & $2.406 \times 10^{6}$ & $4.58 \times 10^{-2}$\\
        \hline
      \end{tabular}
    \caption{Max number of NN parameters per subdomain, the per-subdomain reduction in number of NN parameters, the total number of parameters, and the corresponding error for different subdomain configurations. For the single-domain case, an NM-ROM of dimension $n=9$ is used. For the DD cases, $(n_i^\Omega, n_i^\Gamma)=(6, 3)$, resulting in $9$ DoF per subdomain. HR was not used to evaluate the NM-ROMs in these examples.}
    \label{tbl:NN_params}
\end{table}
We use the notation $2\times 1$ subdomains to indicate $2$ subdomains in the $x$-direction and $1$ subdomain in the $y$-direction.
As expected, from Table \ref{tbl:NN_params}, we see that the maximum number of NN parameters per subdomain decreases significantly as more subdomains are used. 
Furthermore, the total number of NN parameters in the DD cases also decreases relative to the single-domain case. 
We also note that the error increases as more subdomains are used.
We kept $(n_i^\Omega, n_i^\Gamma)=(6, 3)$ constant for each subdomain configuration to isolate the effect of DD on the number of NN parameters, but this may cause overfitting in the $16$ subdomain case. 
More careful hyper-parameter tuning is necessary to mitigate increases in error as the number of subdomains is increased. 


\subsection{LS-ROM vs NM-ROM comparison}\label{sec:numerics_comparison}
Next we compare the DD LS-ROM and DD NM-ROM. We first focus on a DD configuration with $2$ uniformly sized subdomains in the $x$-direction and $2$ uniformly sized subdomains in the $y$-direction ($4$ subdomains total) using the WFPC formulation \eref{eq:dd_lspg_NLP}.
The interior and interface states for the FOM were of dimension $N_i^\Omega=5238$ and $N_i^\Gamma = 1006$, respectively, resulting in $25056$ degrees of freedom (DoF) aggregated across all subdomains.
For both the LS-ROM and NM-ROM, we use reduced state dimensions of $n_i^\Omega = 8$ for the interior states $\hbx_i^\Omega$ and $n_i^\Gamma = 4$ for the interface states $\hbx_i^\Gamma$ for each subdomain, 
resulting in $48$ DoF aggregated across all subdomains.
In the HR case, $N_i^B=100$ HR nodes are used for each subdomain, resulting in $400$ total HR nodes aggregated all subdomains. 

Each interior-state autoencoder has
input/output dimension $N_i^\Omega=5238$,
width $w_i^\Omega=26290$, 
latent dimension $n_i^\Omega=8$, 
and Swish activation 
$\bsigma_i^\Omega(z) = z/(1+e^{-z}).$
Each interface-state autoencoder has
input/output dimension $N_i^\Gamma=1006$,
width $w_i^\Gamma=5030$,
latent dimension $n_i^\Gamma=4$, and Swish activation.
The number of nonzeros per row and column-shift were both set to $5$ for both the interior and interface state decoders.
The number of nonzeros for the interior-states masks is $78820$, resulting in $99.94\%$ sparsity, while the number of nonzeros for the interface-states masks is $15040$, resulting in $99.70\%$ sparsity.

Figure \ref{fig:uv_states_no_hr} shows the FOM, LS-ROM, and NM-ROM solutions without HR, and Figure \ref{fig:uv_states_with_hr} shows the solutions with collocation HR using $48$ DoF in both cases.
In both the HR and non-HR cases with the same DoF, NM-ROM achieves an order of magnitude lower relative error than LS-ROM, as evidenced in Figures \ref{fig:uv_rel_error_no_hr} and \ref{fig:uv_rel_error_with_hr} and Table \ref{tbl:error_speedup}.
Without HR, NM-ROM achieves a relative error of $1.28\times 10^{-3}$ while LS-ROM achieves a relative error $1.98 \times 10^{-2}$ using the same number of DoF. 
LS-ROM also achieves a speedup of $30.0$, whereas NM-ROM achieves a $21.7$ times speedup.
With HR, NM-ROM achieves a relative error of $1.64\times 10^{-3}$ while LS-ROM achieves a relative error $1.44 \times 10^{-2}$ using the same number of DoF. 
In the HR case, LS-ROM achieves a speedup of $347.6$, whereas NM-ROM achieves a $43.9$ speedup.

\begin{figure}[H]
    \centering
    \includegraphics[width=0.49\textwidth]{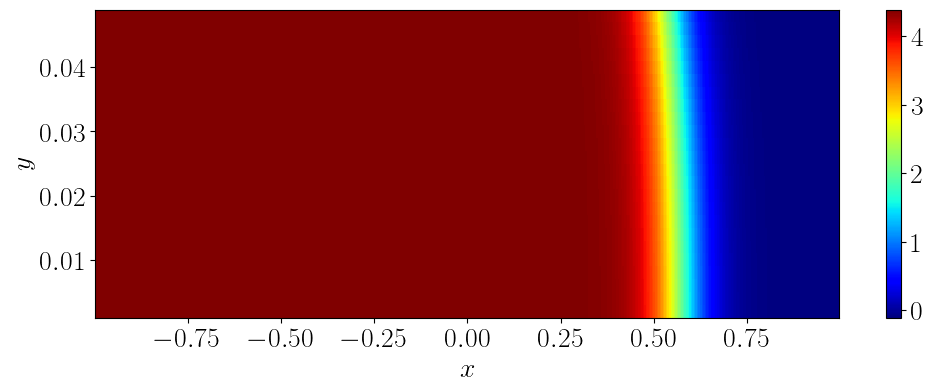}
    \includegraphics[width=0.49\textwidth]{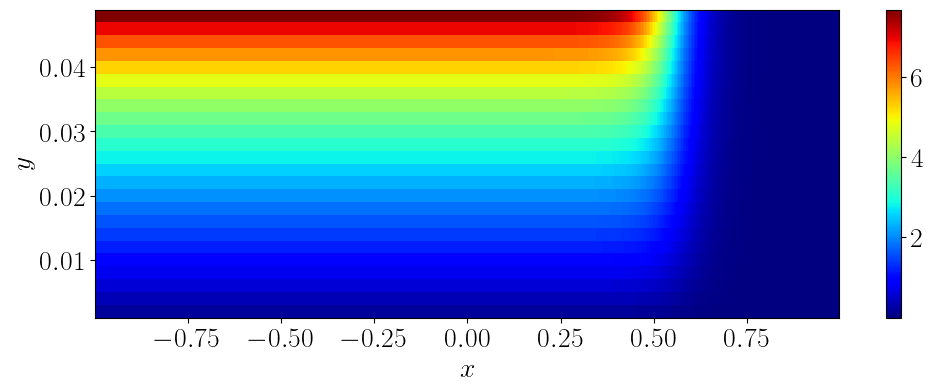}

    \includegraphics[width=0.49\textwidth]{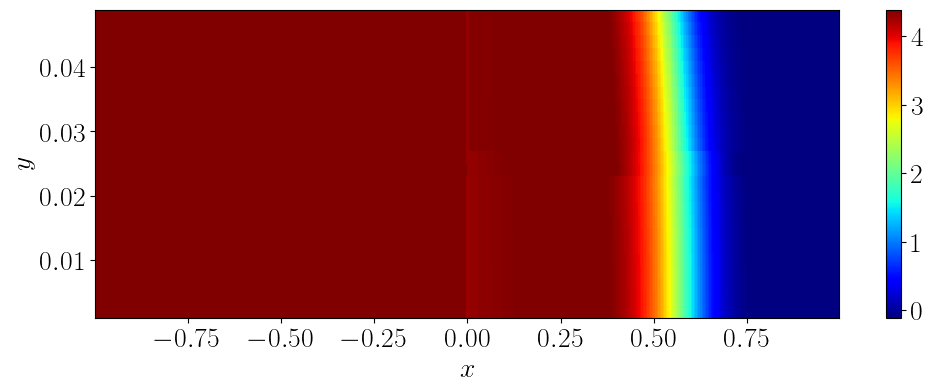}
    \includegraphics[width=0.49\textwidth]{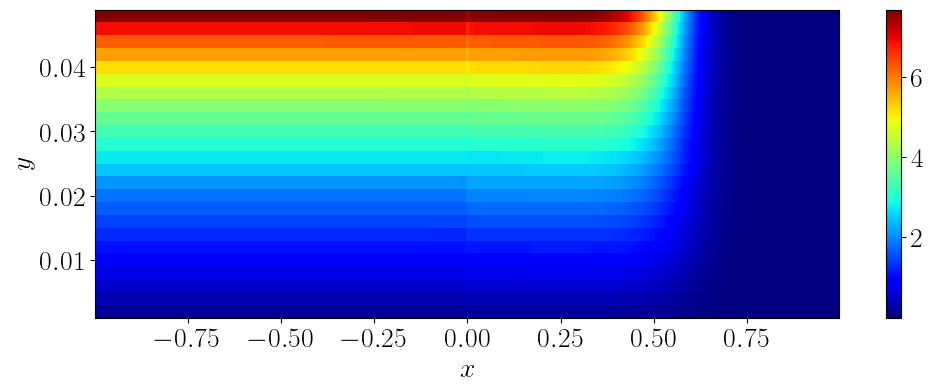}

    \includegraphics[width=0.49\textwidth]{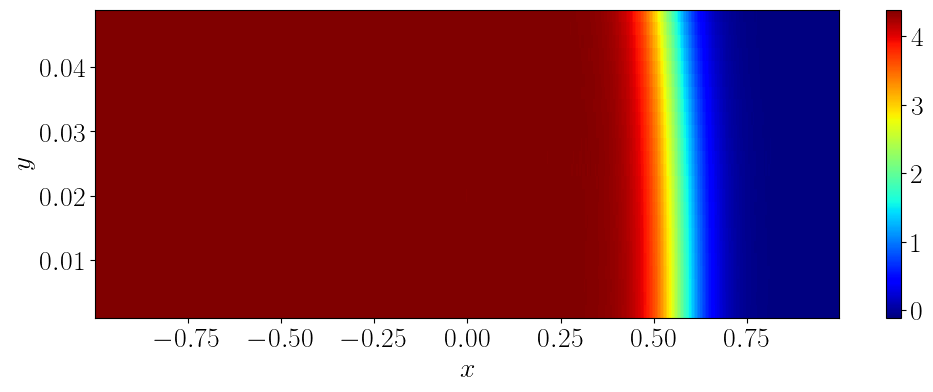}
    \includegraphics[width=0.49\textwidth]{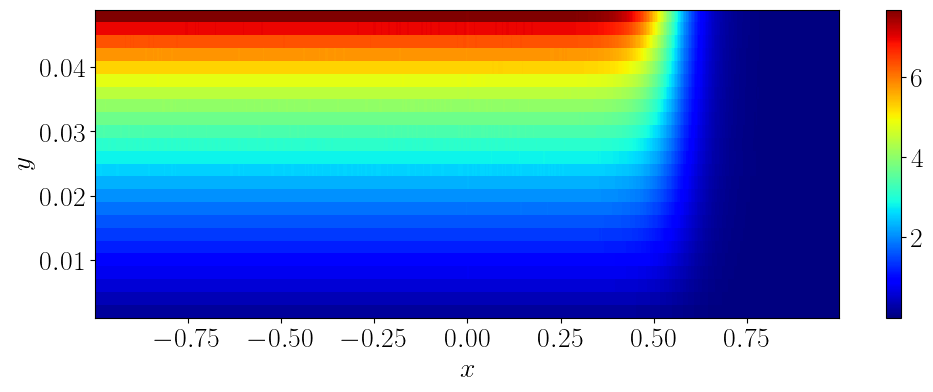}
    \caption{
        Top left: $u$-component of FOM;
        Top right: $v$-component of FOM;
        Middle left: $u$-component of LS-ROM without HR, WFPC, $48$ DoF;
        Middle right: $v$-component of LS-ROM without HR, WFPC, $48$ DoF;
        Bottom left: $u$-component of NM-ROM without HR, WFPC, $48$ DoF;
        Bottom right: $v$-component of NM-ROM without HR, WFPC, $48$ DoF.
    }
    \label{fig:uv_states_no_hr}
\end{figure}

\begin{figure}[H]
    \centering
    \includegraphics[width=0.49\textwidth]{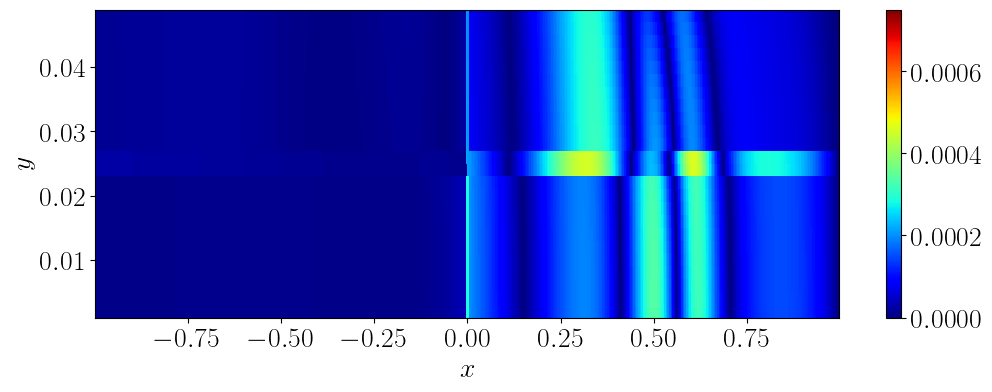}
    \includegraphics[width=0.49\textwidth]{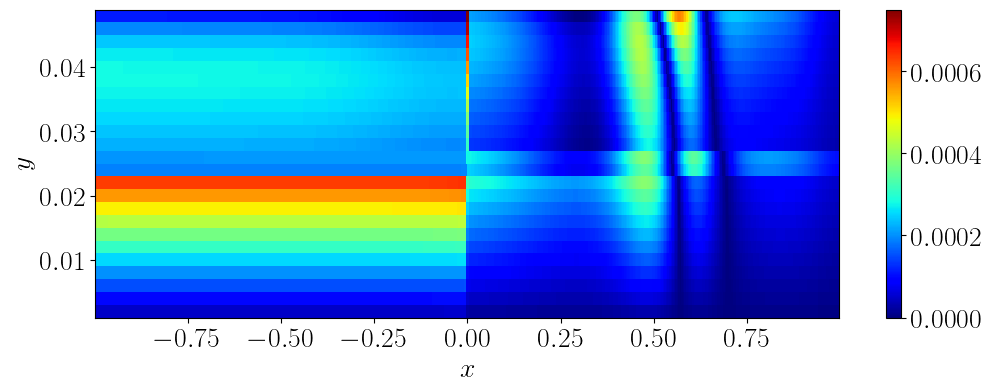}

    \includegraphics[width=0.49\textwidth]{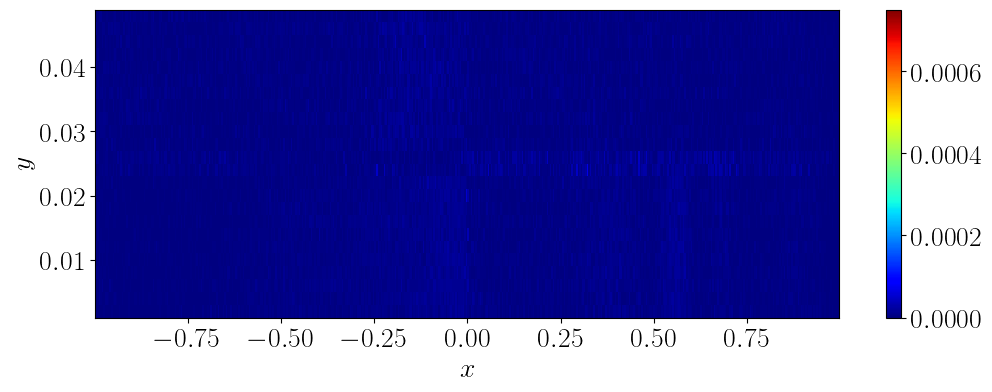}
    \includegraphics[width=0.49\textwidth]{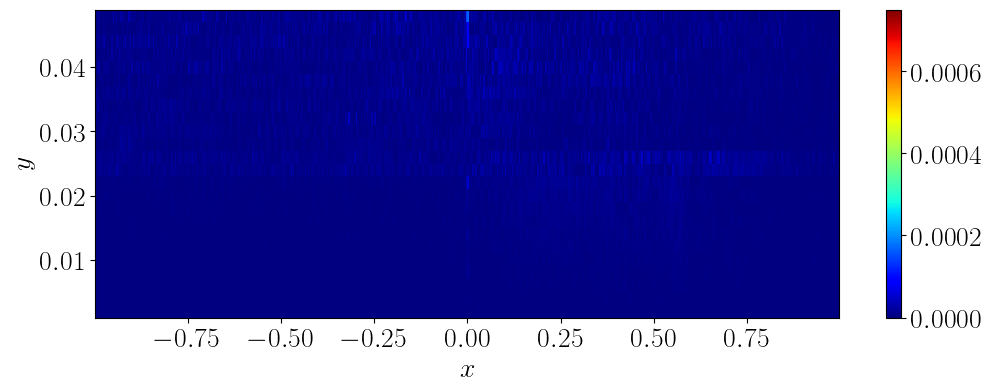}
    \caption{
        Top left: $u$ relative error for LS-ROM without HR, WFPC, $48$ DoF;
        Top right: $v$ relative error for LS-ROM without HR, WFPC, $48$ DoF;
        Bottom left: $u$ relative error for RM-ROM without HR, WFPC, $48$ DoF;
        Bottom right: $v$ relative error for NM-ROM without HR, WFPC, $48$ DoF.
    }
    \label{fig:uv_rel_error_no_hr}
\end{figure}

\begin{figure}[H]
    \centering
    \includegraphics[width=0.49\textwidth]{u_fom.png}
    \includegraphics[width=0.49\textwidth]{v_fom.png}

    \includegraphics[width=0.49\textwidth]{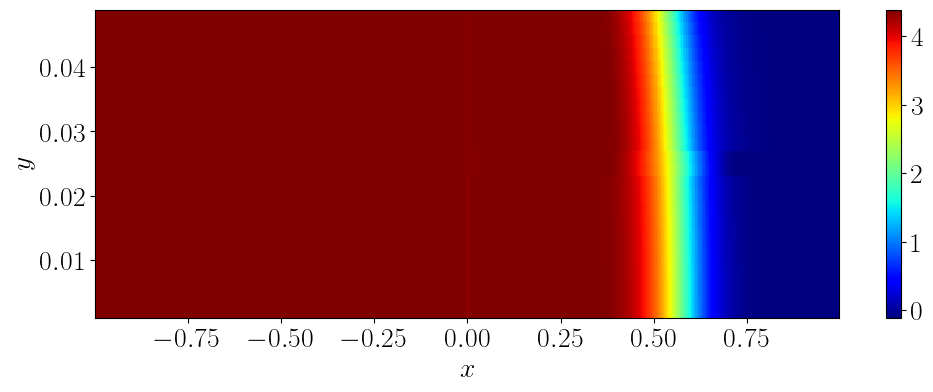}
    \includegraphics[width=0.49\textwidth]{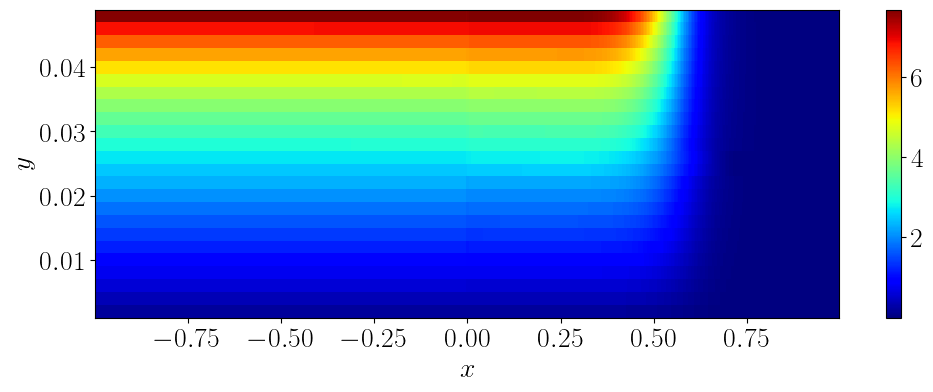}

    \includegraphics[width=0.49\textwidth]{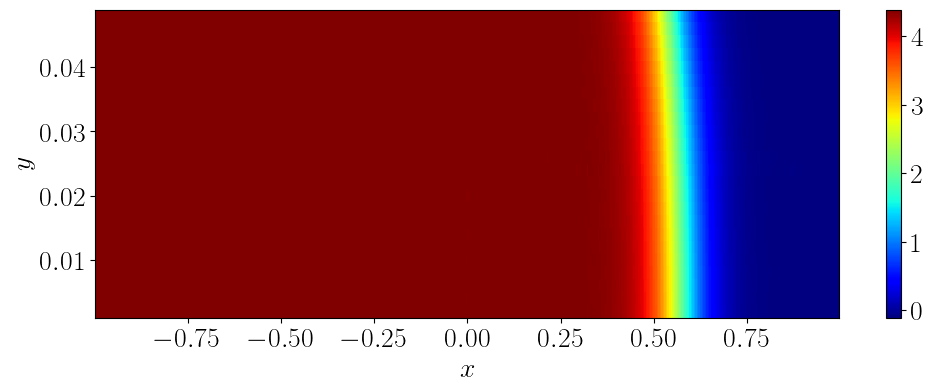}
    \includegraphics[width=0.49\textwidth]{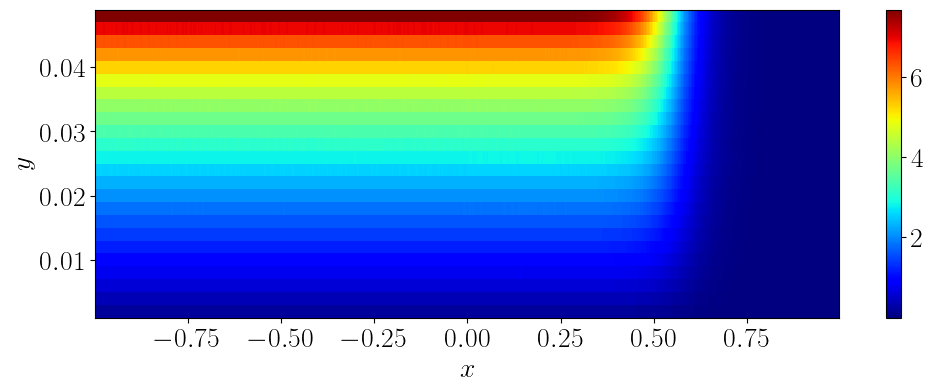}
    \caption{
        Top left: $u$-component of FOM;
        Top right: $v$-component of FOM;
        Middle left: $u$-component of LS-ROM with collocation HR, WFPC, $48$ DoF;
        Middle right: $v$-component of LS-ROM with collocation HR, WFPC, $48$ DoF;
        Bottom left: $u$-component of NM-ROM with collocation HR, WFPC, $48$ DoF;
        Bottom right: $v$-component of NM-ROM with collocation HR, WFPC, $48$ DoF.
    }
    \label{fig:uv_states_with_hr}
\end{figure}

\begin{figure}[H]
    \centering
    \includegraphics[width=0.49\textwidth]{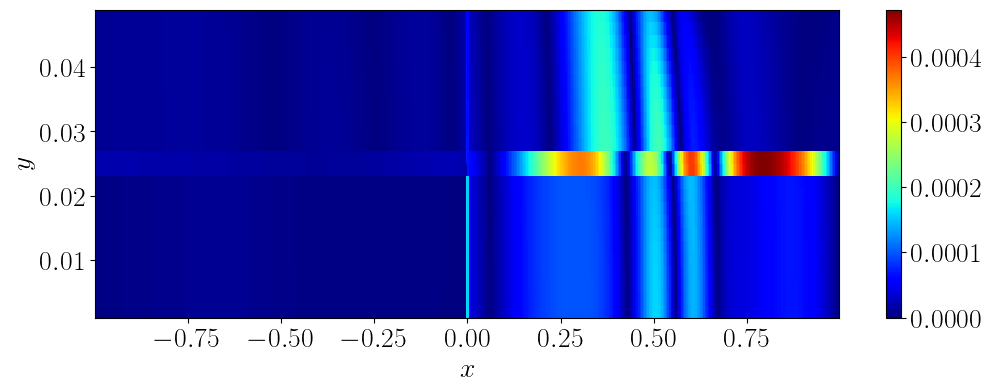}
    \includegraphics[width=0.49\textwidth]{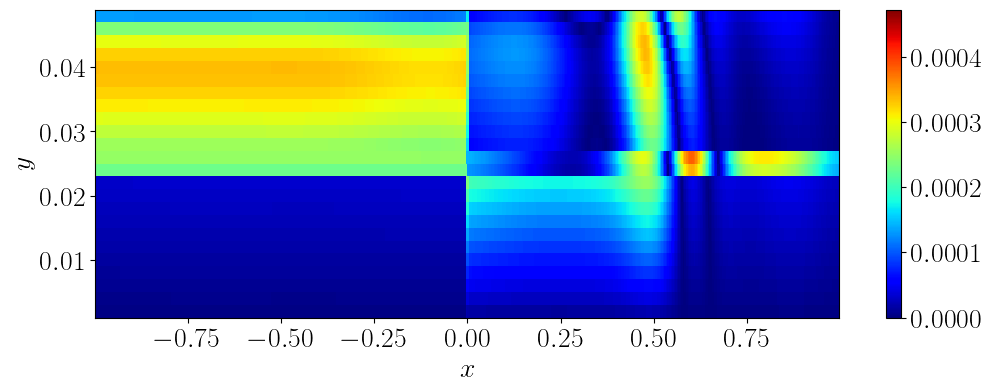}

    \includegraphics[width=0.49\textwidth]{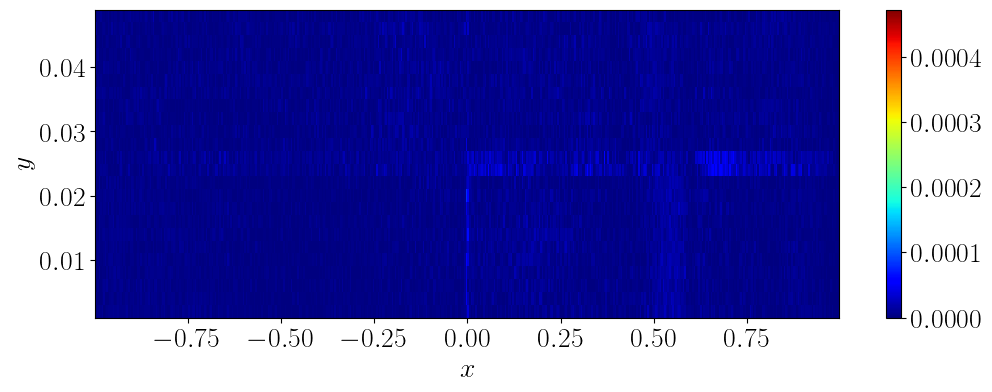}
    \includegraphics[width=0.49\textwidth]{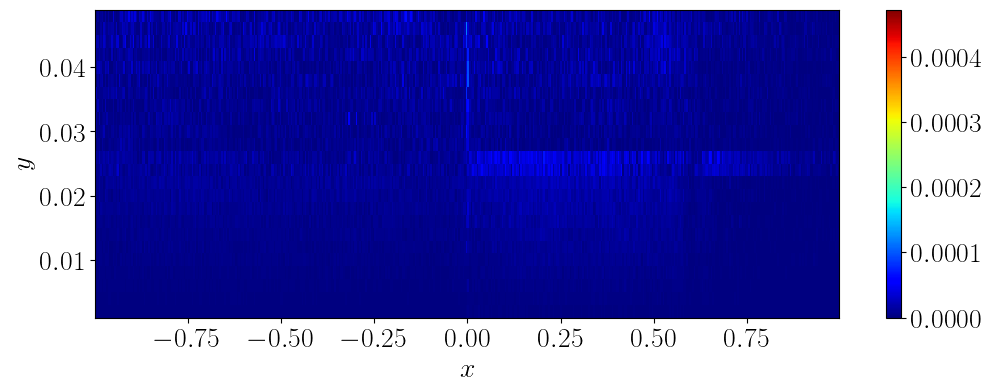}
    \caption{
        Top left: $u$ relative error for LS-ROM with collocation HR, WFPC, $48$ DoF;
        Top right: $v$ relative error for LS-ROM with collocation HR, WFPC, $48$ DoF;
        Bottom left: $u$ relative error for NM-ROM with collocation HR, WFPC, $48$ DoF;
        Bottom right: $v$ relative error for NM-ROM with collocation HR, WFPC, $48$ DoF.
    }
    \label{fig:uv_rel_error_with_hr}
\end{figure}

Now we compare the performance of LS-ROM and NM-ROM for both the WFPC and SRPC formulations while varying the interior and interface ROM state dimensions $n_i^\Omega$ and $n_i^\Gamma$. 
For WFPC, the decoders $\bg_i^\Omega$ and $\bg_i^\Gamma$ use Swish activations, and their sparsity masks have 5 nonzeros per band and a column-shift of $5$ for each test.  
For SRPC, the interior-state decoders $\bg_i^\Omega$ are reused from the WFPC formulation, 
and the port-state decoders $\bg_j^p$ were chosen to have Sigmoid activation and sparsity masks with 3 nonzero entries per band with column-shift of 3. 
We also define $\widetilde{n}_j^p$, which determines the port latent dimensions $n_j^p$ via the relation
$$
n_j^p = \max\set{\min\set{N_j^p-1,\; \widetilde{n}_j^p},1}, \qquad \forall \; j = 1, \dots, n_p.
$$
This rule was chosen to ensure that $1 \leq n_j^p < N_j^p$ because some DD configurations have ports with a very small number of nodes (e.g., $N_j^p=2$).
Recall that for SRPC, the ROM interface-state dimension is $n_i^\Gamma = \sum_{j\in Q(i)} n_j^p.$
Table \ref{tbl:error_speedup} shows the relative error and speedup for LS-ROM and NM-ROM for both WFPC and SRPC on the $2\times 2$ subdomain problem with and without HR while varying $n_i^\Omega$, $\widetilde{n}_j^p$, and $n_i^\Gamma$.


From Table \ref{tbl:error_speedup}, we see that NM-ROM consistently achieves an order of magnitude lower error than LS-ROM both with and without HR when comparing ROMs of the same dimensions and constraint formulations.
In the non-HR case with WFPC, LS-ROM only achieves an order $10^{-3}$ error for a ROM with $96$ total DoF (rel. error = $2.66 \times 10^{-3}$), while NM-ROM can achieve a similar error with only $36$ DoF (rel. error = $2.42 \times 10^{-3}$) and a higher speedup (speedup = $26.2$) compared to LS-ROM with similar accuracy (speedup = $18.3$). 
For SRPC, LS-ROM was only able to achieve order $10^{-2}$ accuracy for all cases tested.
We also see that LS-ROM achieves a much higher speedup in the HR cases while retaining similar errors from the non-HR cases. 
NM-ROM also retains high accuracy after HR, and gains an extra $15$-$20\times$ speedup after applying HR. 

\begin{table}[H]
    \centering
    \begin{tabular}{|c|c|c|c|c|c|c|c|c|c|}
        \hline
         & Constraints & $n_i^\Omega$ & $\widetilde{n}_j^p$ & $n_i^\Gamma$ & Total DoF & Error & Speedup & Error (HR) & Speedup (HR)\\
        \hline
        \multirow{9}{*}{LS-ROM} 
        & \multirow{5}{*}{WFPC} 
        &   $4$ & - & $2$ & $24$ & $4.12 \times 10^{-2} $ & $32.1$ & $3.45 \times 10^{-2}$ & $352.7$\\
        &&  $6$ & - & $3$ & $36$ & $2.06 \times 10^{-2} $ & $48.7$ & $1.78 \times 10^{-2} $ & $340.0$\\
        &&  $8$ & - & $4$ & $48$ & $1.98 \times 10^{-2} $ & $30.0$ & $1.44 \times 10^{-2} $ & $347.6$\\
        && $10$ & - & $5$ & $60$ & $1.50 \times 10^{-2} $ & $16.3$ & $1.16 \times 10^{-2} $ & $329.6$\\
        && $16$ & - & $8$ & $96$ & $2.66 \times 10^{-3} $ & $18.3$ & $3.23 \times 10^{-3} $ & $280.4$\\
        \cline{2-10}
        & \multirow{4}{*}{SRPC}
        &   $6$ & $1$ & $5$ & $44$ & $5.17 \times 10^{-2} $ & $24.5$ & $5.12 \times 10^{-2} $ & $315.6$\\
        &&  $8$ & $2$ & $7$ & $60$ & $3.75 \times 10^{-2} $ & $22.0$ & $4.22 \times 10^{-2} $ & $313.9$\\
        && $10$ & $3$ & $9$ & $76$ & $1.87 \times 10^{-2} $ & $19.4$ & $2.94 \times 10^{-2} $ & $262.6$\\
        && $16$ & $4$ & $11$ & $108$ & $2.00 \times 10^{-2} $ & $17.8$ & $3.37 \times 10^{-2} $ & $253.8$\\
        \hline
        \multirow{9}{*}{NM-ROM} 
        & \multirow{5}{*}{WFPC} 
        &   $4$ & - & $2$ & $24$ & $6.94 \times 10^{-3} $ & $22.7$ & $7.04 \times 10^{-3} $ & $37.4$\\
        &&  $6$ & - & $3$ & $36$ & $2.42 \times 10^{-3} $ & $26.2$ & $2.60 \times 10^{-3} $ & $44.7$\\
        &&  $8$ & - & $4$ & $48$ & $1.28 \times 10^{-3} $ & $21.7$ & $1.64 \times 10^{-3} $ & $43.9$\\
        && $10$ & - & $5$ & $60$ & $1.09 \times 10^{-3} $ & $15.0$ & $1.19 \times 10^{-3} $ & $43.6$\\
        && $16$ & - & $8$ & $96$ & $7.87 \times 10^{-4} $ & $13.9$ & $9.80 \times 10^{-4} $ & $37.5$ \\
        \cline{2-10}
        & \multirow{4}{*}{SRPC}
        &   $6$ & $1$ & $5$ & $44$ & $2.75 \times 10^{-2} $ & $27.6$ & $3.17 \times 10^{-2} $ & $41.1$\\
        &&  $8$ & $2$ & $7$ & $60$ & $1.19 \times 10^{-3} $ & $28.8$ & $1.70 \times 10^{-3} $ & $42.6$\\
        && $10$ & $3$ & $9$ & $76$ & $1.46 \times 10^{-3} $ & $17.0$ & $2.54 \times 10^{-3} $ & $43.1$\\
        && $16$ & $4$ & $11$ & $108$ & $1.11 \times 10^{-3}$ & $16.8$ & $2.45 \times 10^{-3} $ & $47.1$\\
        \hline
      \end{tabular}
    \caption{Relative error and speedup for LS-ROM and NM-ROM with and without HR applied to the WFPC and SRPC formulations. We use $N_i^B=100$ HR nodes per subdomain in the HR case.}
    \label{tbl:error_speedup}
\end{table}


Next we examine the effect of subdomain configuration on the accuracy of LS-ROM and NM-ROM. 
Again let $n_\Omega^x$ and $n_\Omega^y$ denote the number of subdomains in the $x$- and $y$-directions, respectively. 
In each case, the subdomains are of uniform size.
For the WFPC cases, we used $(n_i^\Omega, n_i^\Gamma)=(8,4)$ for each subdomain, 
and for the SRPC cases, we used $(n_i^\Omega, \widetilde{n}_j^p)=(8, 2)$ for each subdomain and port, respectively.
For the HR cases, we used $N_i^B=100$ HR nodes. 
The results for the non-HR and HR cases are reported in Table \ref{tbl:error_speedup_dd_config}.

Table \ref{tbl:error_speedup_dd_config} shows that LS-ROM is more sensitive to subdomain configuration than NM-ROM in the WFPC cases.
Indeed, when using $2$ subdomains in the $y$-direction, the relative error for LS-ROM increases to the order of $10^{-2}$, compared to errors on the order of $10^{-3}$ when only $1$ subdomain is used in the $y$-direction. 
In contrast, relative error for NM-ROM with WFPC is more stable with respect to subdomain configuration. 
The relative errors are on the order of $10^{-3}$ for all configuration considered.
For SRPC, LS-ROM only achieves order $10^{-3}$ relative error for the $(n_\Omega^x, n_\Omega^y) = (2, 1)$ configuration, while the remaining configurations have order $10^{-2}$ 
relative error.
For NM-ROM with SRPC, all configurations have order $10^{-3}$ relative error 
except for $(n_\Omega^x, n_\Omega^y) = (4, 2)$, which attains order $10^{-2}$ relative error. 
For both WFPC and SRPC, the NM-ROM error increases slightly as more subdomains are used, but we expect this error can be decreased by adjusting hyper-parameters during ROM training. 
Hyper-parameter tuning was only done for the $2\times2$ subdomain configuration.

\begin{table}[H]
    \centering
    \begin{tabular}{|c|c|c|c|c|c|c|c|c|c|c|}
        \hline
         & Constraints & $n_\Omega^x$ & $n_\Omega^y$ & \# subdomains  & Error & Speedup & Error (HR) & Speedup (HR)\\
        \hline
        \multirow{8}{*}{LS-ROM} 
        & \multirow{4}{*}{WFPC} 
        &  $2$ & $1$ & $2$ & $6.36 \times 10^{-3}$ & $25.1$ & $6.64 \times 10^{-3}$ & $285.7$\\
        && $2$ & $2$ & $4$ & $1.98 \times 10^{-2}$ & $30.0$ & $1.44 \times 10^{-2}$ & $347.6$\\
        && $4$ & $1$ & $4$ & $7.34 \times 10^{-3}$ & $37.1$ & $7.47 \times 10^{-3}$ & $373.1$\\
        && $4$ & $2$ & $8$ & $2.29 \times 10^{-2}$ & $35.8$ & $4.21 \times 10^{-2}$ & $259.2$\\
        \cline{2-9}
        & \multirow{4}{*}{SRPC} 
        &  $2$ & $1$ & $2$ & $6.85 \times 10^{-3}$ & $30.5$ & $9.49 \times 10^{-3}$ & $293.0$\\
        && $2$ & $2$ & $4$ & $3.75 \times 10^{-2}$ & $22.0$ & $4.22 \times 10^{-2}$ & $313.9$\\
        && $4$ & $1$ & $4$ & $1.04 \times 10^{-2}$ & $24.6$ & $5.94 \times 10^{-2}$ & $287.5$\\
        && $4$ & $2$ & $8$ & $4.96 \times 10^{-2}$ & $12.2$ & $5.19 \times 10^{-2}$ & $181.6$\\
        \hline
        \multirow{8}{*}{NM-ROM} 
        & \multirow{4}{*}{WFPC} 
        &  $2$ & $1$ & $2$ & $1.34 \times 10^{-3}$ & $16.8$ & $1.36 \times 10^{-3}$ & $30.5$ \\
        && $2$ & $2$ & $4$ & $1.28 \times 10^{-3}$ & $21.7$ & $1.64 \times 10^{-3}$ & $43.9$ \\
        && $4$ & $1$ & $4$ & $3.14 \times 10^{-3}$ & $27.8$ & $4.98 \times 10^{-3}$ & $38.6$\\
        && $4$ & $2$ & $8$ & $4.82 \times 10^{-3}$ & $26.3$ & $5.98 \times 10^{-3}$ & $40.4$ \\
        \cline{2-9}
        & \multirow{4}{*}{SRPC} 
        &  $2$ & $1$ & $2$ & $1.00 \times 10^{-3}$ & $17.0$ & $1.37 \times 10^{-3}$ & $35.9$\\
        && $2$ & $2$ & $4$ & $1.19 \times 10^{-3}$ & $28.8$ & $1.70 \times 10^{-3}$ & $42.6$\\
        && $4$ & $1$ & $4$ & $1.68 \times 10^{-3}$ & $27.4$ & $2.12 \times 10^{-3}$ & $39.1$\\
        && $4$ & $2$ & $8$ & $1.67 \times 10^{-2}$ & $24.2$ & $2.39 \times 10^{-2}$ & $32.5$\\
        \hline
      \end{tabular}
    \caption{Relative error and speedup for LS-ROM and NM-ROM with and without HR and different subdomain configurations for the WFPC and SRPC formulations. We use $n_i^\Omega=8$ for all cases, $n_i^\Gamma=4$ for the WFPC cases, $\widetilde{n}_j^p=2$ for the SRPC cases, and $N_i^B=100$ for the HR cases.}
    \label{tbl:error_speedup_dd_config}
\end{table}

Tables \ref{tbl:error_speedup} and \ref{tbl:error_speedup_dd_config} show that SRPC has slightly worse performance than WFPC. 
In particular, Table \ref{tbl:error_speedup} shows that the relative errors for both LS-ROM and NM-ROM with SRPC do not decrease monotonically as $n_i^\Omega$ and $\widetilde{n}_j^p$ increase. 
In contrast, the relative errors do decrease monotonically as $n_i^\Omega$ and $n_i^\Gamma$ increase for both LS-ROM and NM-ROM with WFPC. 
Furthermore, Table \ref{tbl:error_speedup_dd_config} shows that LS-ROM with SRPC consistently has larger errors than with WFPC for each subdomain configuration. 
For NM-ROM, the relative errors and speedups are similar between WFPC and SRPC for each subdomain configuration except for $(n_\Omega^x, n_\Omega^y) = (4, 2)$, which has an order of magnitude higher error than the other configurations. 
Since SRPC performs as well or worse compared to WFPC for the cases tested, we only consider WFPC in the remainder of this section. 


\subsection{Pareto fronts}\label{sec:numerics_pareto}
Next we compute Pareto fronts to compare LS-ROM and NM-ROM with WFPC while varying different parameters. 
The relative error reported is the same as defined in equation \eref{eq:relative_error}.
Figure \ref{fig:pareto_romsize} shows the Pareto fronts for varying $(n_i^\Omega, n_i^\Gamma)$ for both the non-HR and HR cases. In the HR case, we use $N_i^B=100$ for each subdomain. 
We see that LS-ROM wins in terms of relative wall time, while
NM-ROM attains an order of magnitude lower error in comparison to LS-ROM in each case. 
\begin{figure}[H]
    \centering
    \includegraphics[width=0.49\textwidth]{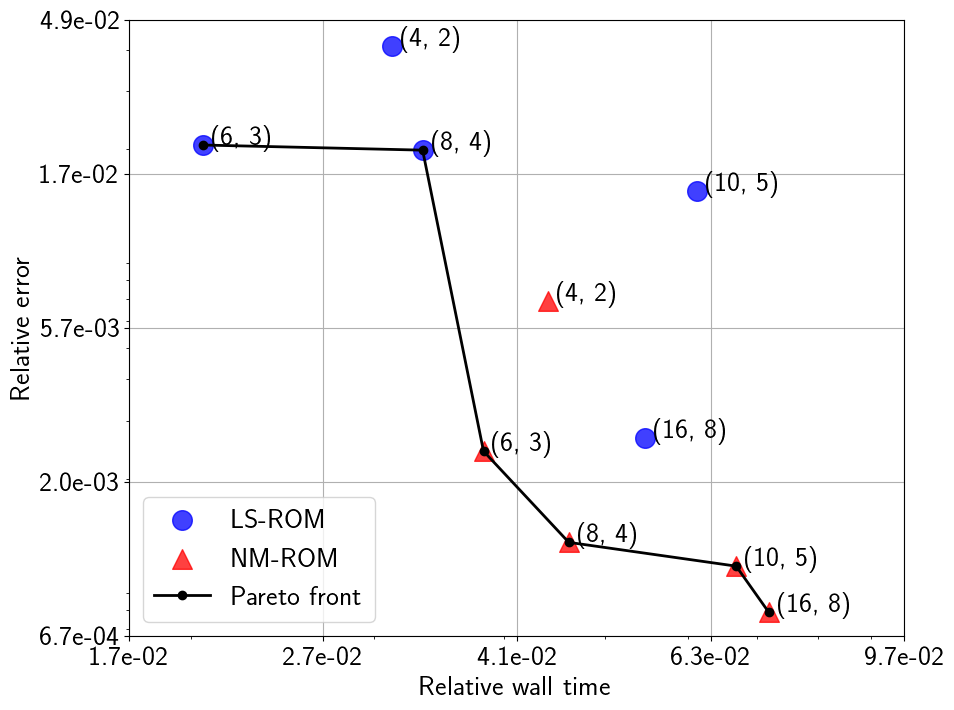}
    \includegraphics[width=0.49\textwidth]{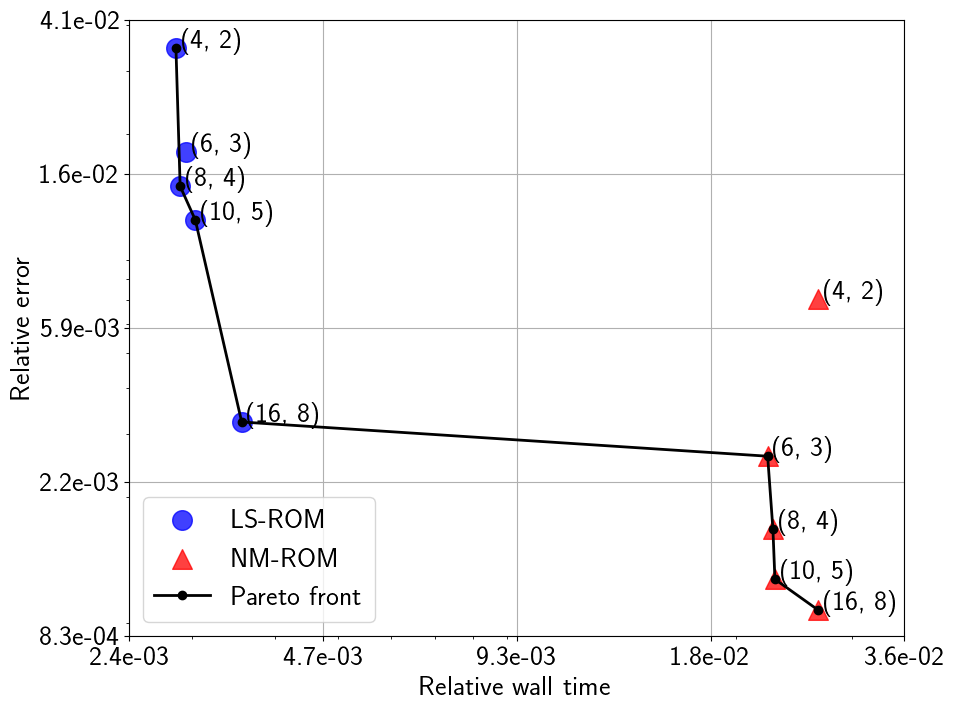}

    \caption{
        Left: Pareto front for LS-ROM and NM-ROM without HR with varying $(n_i^\Omega, n_i^\Gamma)$ for WFPC formulation;
        Right: Pareto front for LS-ROM and NM-ROM with varying $(n_i^\Omega, n_i^\Gamma)$ and $N_i^B=100$ HR nodes per subdomain for WFPC formulation
    }
    \label{fig:pareto_romsize}
\end{figure}

Figure \ref{fig:pareto_hr} shows the Pareto front for varying number of HR nodes per subdomain, $N_i^B$. In this case, $(n_i^\Omega, n_i^\Gamma)=(8,4)$ for each experiment. As in the case for varying $(n_i^\Omega, n_i^\Gamma)$, NM-ROM attains an order of magnitude lower relative error. Moreover, both the relative error and relative wall time for NM-ROM remains small for each value of $N_i^B$, whereas the relative error and relative wall time for LS-ROM has more variability with respect to number of HR nodes. 
\begin{figure}[H]
    \centering
    \includegraphics[width=0.49\textwidth]{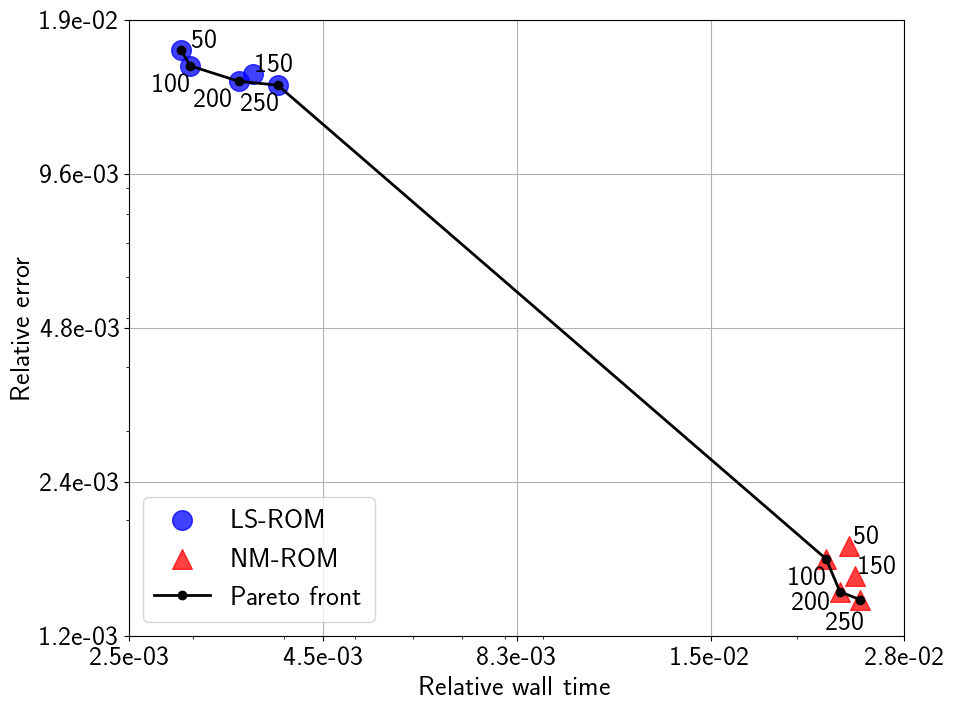}
    \caption{
        Pareto front for LS-ROM and NM-ROM with $(n_i^\Omega, n_i^\Gamma)=(8,4)$ and varying number of HR nodes per subdomain $N_i^B$ for WFPC formulation
    }
    \label{fig:pareto_hr}
\end{figure}

Figure \ref{fig:pareto_nsnaps} shows the Pareto fronts for varying number of training snapshots in the non-HR and HR cases. We used $(n_i^\Omega, n_i^\Gamma)=(8,4)$ for each experiment, and used $N_i^B=100$ HR nodes in the HR case. For LS-ROM, the whole training set was used to compute the POD bases, whereas for NM-ROM, the displayed number of snapshots underwent a random 90-10 split for training and validation, respectively. 
In the case of LS-ROM, the relative error remained constant for the number of training snapshots used, whereas the relative error for NM-ROM decreased as more training snapshots were used. 
\begin{figure}[H]
    \centering
    \includegraphics[width=0.49\textwidth]{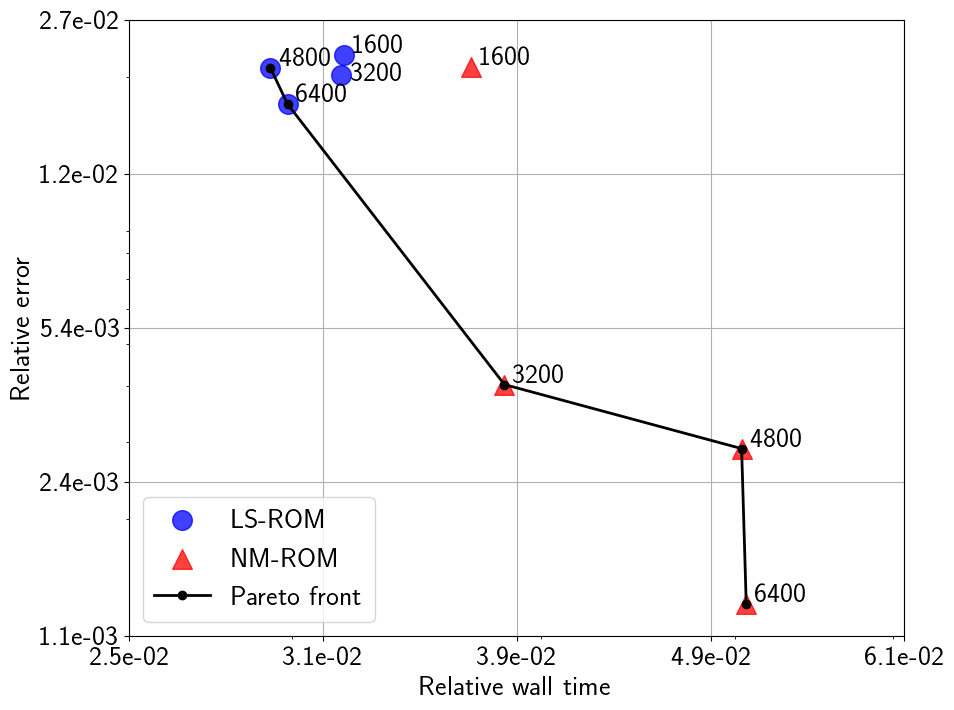}
    \includegraphics[width=0.49\textwidth]{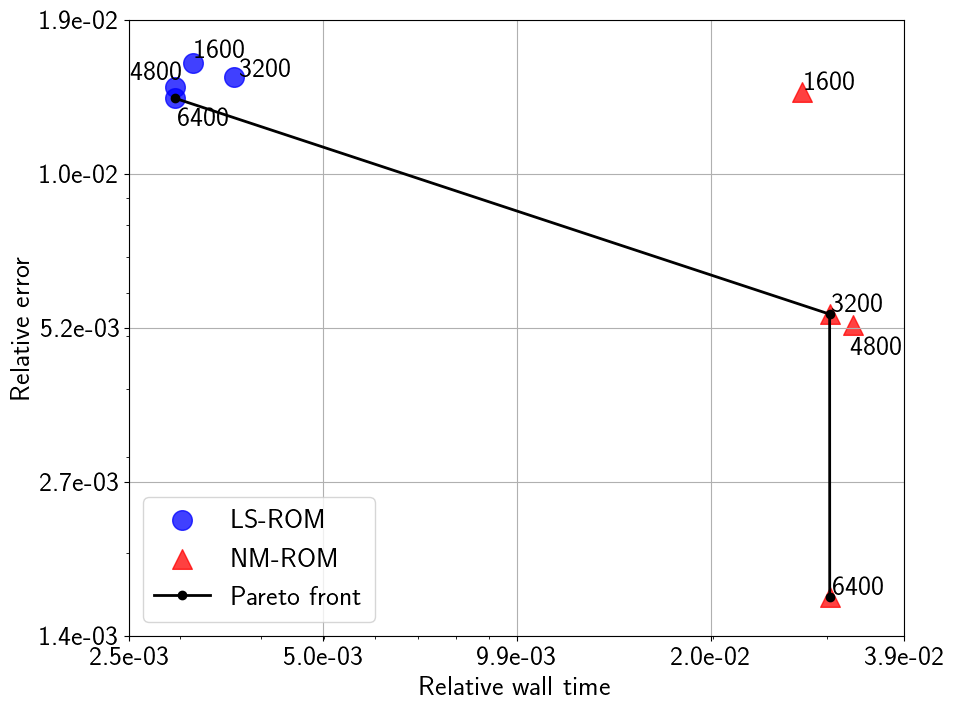}
    \caption{
        Left: Pareto front for LS-ROM and NM-ROM with $(n_i^\Omega, n_i^\Gamma)=(8,4)$ and varying number of training snapshots for WFPC formulation;
        Right: Pareto front for LS-ROM and NM-ROM with $(n_i^\Omega, n_i^\Gamma)=(8,4)$, $N_i^B=100$ HR nodes per subdomain, and varying number of training snapshots for WFPC formulation
    }
    \label{fig:pareto_nsnaps}
\end{figure}


\section{Conclusion}\label{sec:conclusion}
In this work, we detail the first application of NM-ROM with HR to a DD problem.
We extend the DD framework of \cite{CHoang_YChoi_KCarlberg_2021a}
and compute ROMs using the NM-ROM \cite{KLee_KTCarlberg_2020a} approach on each subdomain. 
Furthermore, we apply HR to NM-ROM on each subdomain, which informs the use of shallow, sparse autoencoders, as in \cite{YKim_YChoi_DWidemann_TZohdi_2022a}. 
We detail how to implement an inexact Lagrange-Newton SQP method to solve the constrained least-squares formulation of the ROM, 
where the inexactness comes from using a Gauss-Newton approximation of the residual terms, and from neglecting second-order decoder derivative terms coming from the compatibility constraints. 
We then provide a convergence result for the SQP solver used using standard theory of inexact Newton's method. 
We also provide {\it a priori} and {\it a posteriori} error bounds between the FOM and ROM solutions. 

From our numerical experiments on the 2D steady-state Burgers' equation, 
we showed that using the DD NM-ROM approach significantly decreases the number of required NN parameters per subdomain compared to the monolithic single-domain NM-ROM. 
We also showed that DD NM-ROM achieves an order of magnitude lower relative error than DD LS-ROM in nearly all cases tested. 
Furthermore, in the non-HR case, NM-ROM is faster than LS-ROM in some instances. 
We also saw that NM-ROM is more robust than LS-ROM with respect to changes in subdomain configuration. 
In some cases, the subdomain configuration increased the LS-ROM relative error by an order of magnitude. 
While LS-ROM with HR achieves much higher speedup than NM-ROM with HR, NM-ROM is still the clear winner in terms of ROM accuracy for a given ROM size. 
Moreover, HR allows NM-ROM to gain an extra $15$-$20\times$ speedup compared to the non-HR cases. 
While the speedup is not as drastic as for LS-ROM, these speedup gains for NM-ROM are the highest that have been achieved for NM-ROM to our knowledge. 
Our results indicate that NM-ROM should be the preferred approach for problems where ROM accuracy for a given ROM size is more important than speedup. 

Although NM-ROM performs better than LS-ROM in our experiments, LS-ROM still attains a relatively low relative error. 
This indicates that the model problem considered may still be too benign to expose the advantages that NM-ROM has over LS-ROM, particularly when applied to problems with slowly decaying Kolmogorov $n$-width. 
Therefore, in future work, we plan to apply DD NM-ROM to more complicated problems, including those with slowly decaying Kolmogorov $n$-width, as well as to time-dependent problems. 
Furthermore, the speedup of the DD NM-ROM is highly dependent on the SQP solver used. 
Thus, it will be important to investigate the use of other optimization algorithms for the solution of \eref{eq:dd_lspg_NLP} and examine their effects on computational speedup. 
Other directions for future research include a greedy sampling strategy for the parameter space $\cD$ when choosing which FOM snapshots to compute for NM-ROM training, 
implementing a ``bottom-up" training strategy that uses subdomain snapshots rather than full-domain snapshots for training, applying the DD NM-ROM framework to decomposable or component-based systems, and applying NM-ROM to other DD approaches such as the Schwarz method. 
Finally error estimates based on the first order necessary optimality conditions, as outlined in the last paragraph of Section~\ref{sec:error_analysis}
is also part of future research.

\section*{Acknowledgements}
This work was performed at Lawrence Livermore National Laboratory. 
A.\ N.\ Diaz was supported for this work by a Defense Science and Technology Internship (DSTI) at Lawrence Livermore National Laboratory and a 2021 National Defense Science and Engineering Graduate Fellowship.
Y.\ Choi was supported for this work by the US Department of Energy under the Mathematical Multifaceted Integrated Capability Centers – DoE Grant DE –  SC0023164; The Center for Hierarchical and Robust Modeling of Non-Equilibrium Transport (CHaRMNET) and partially by LDRD (21-SI-006). 
M.\ Heinkenschloss was supported by AFOSR Grant FA9550-22-1-0004 at Rice University.
Lawrence Livermore National Laboratory is operated by Lawrence Livermore National Security, LLC, for the U.S. Department of Energy, National Nuclear Security Administration under Contract DE-AC52-07NA27344; IM release number: LLNL-JRNL-849457.

\bibliography{references, yc_references}
\bibliographystyle{elsarticle-num}

\end{document}